\newtheorem{theorem}{\sc Theorem}[section]
\newtheorem{lemma}[theorem]{\sc Lemma}
\newtheorem{proposition}[theorem]{\sc Proposition}
\newtheorem{corollary}[theorem]{\sc Corollary}
\newtheorem{definition}[theorem]{\sc Definition}
\newtheorem{conjecture}[theorem]{Conjecture}
\theoremstyle{remark}
\newcommand{\be}{\begin{equation}}
\newcommand{\ee}{\end{equation}}
 \DeclareMathOperator{\Gal}{Gal}
\DeclareMathOperator{\Z}{\mathbb{Z}}
\DeclareMathOperator{\F}{\mathbb{F}}
\DeclareMathOperator{\ram}{Ram}
\DeclareMathOperator{\q}{\mathbb{Q}}
\DeclareMathOperator{\Aut}{Aut}
\definecolor{darkgreen}{rgb}{0.0,0.5,0.0}
\definecolor{darkblue}{rgb}{0.0,0.0,0.3}
\definecolor{nicosred}{rgb}{0.65,0.1,0.1}
\definecolor{light-gray}{gray}{0.7}
\numberwithin{equation}{section}
\begin{document}

\title{Minimal Ramification and the Inverse Galois Problem over the Rational Function Field $\mathbb{F}_p(t)$}
\author[M.~DeWitt]{Meghan DeWitt}
\address{Meghan De Witt\\ Brigham Young University\\ Provo, UT  84602\\ USA.}
\email{megdewitt@gmail.com}
\email{dewitt@math.byu.edu}
%\urladdr{http://www.math.byu.edu/dewitt/}
%\thanks{This work was done under the supervision of Nigel Boston.}
\keywords{Function Fields, Ramification, Inverse Galois}
%\subjclass[2000]{60K35}
\date{\today}
\begin{abstract}
The inverse Galois problem is concerned with finding a Galois extension of a field $K$ with given Galois group.  In this paper we consider the particular case where the base field is $K=\F_p(t)$.  We give a conjectural formula for the minimal number of primes, both finite and infinite, ramified in $G$-extensions of $K$, and give theoretical and computational proofs for many cases of this conjecture.
\end{abstract}
\maketitle

\section{Introduction}
In 1892 Hilbert proposed the first systematic approach to solving the question of which finite groups occur as Galois groups over the rational numbers $\q$, using his Irreducibility Theorem to consider the problem over $\q(t)$ \cite[Page v]{MM}.  Since his work, the question of which finite groups occur as Galois groups over $\q$, then later over any field $K$, has been studied extensively.

What types of restrictions can we place on the field extension for a given group $G$?  Can we produce a $G$-extension that is ramified at a specific prime, or unramified outside of a set of primes?  We consider the case, for a fixed finite group $G$ and global field $K$, of finding the minimal number of primes that will ramify in any $G$-extension of $K$.

Let $K$ be a global field, and define $$\ram _K(G):= \mathop{\mbox{min}}_{\text{Gal}(L/K)\cong G} {\# \{\text{places that ramify in } L/K\}}.$$  Let $d(G)$ denote the minimum number of generators for the group $G$.  For completeness, we set $d(\{1\})=0$.  Also, we define $p(G)$ to be the normal subgroup generated by the elements of $p$-power order.

Work of Boston and Markin \cite{BM} explored this question in the case where $K=\q$, giving the expected minimal number of ramified primes as $d\left(G^{ab}\right)$.  Harbater \cite{Harb} and Raynaud \cite{Ray} proved Abhyankar's Conjecture, which covers the same situation for $K=\overline{\F_p}(t)$, giving the minimal number of ramified primes as $d\left(G/p(G)\right)+1$.  Grothendieck explored the same issue with his work on the fundamental group of the punctured projective line \cite{Groth1,Groth2}.

In the following, we consider the case where $K=\F_p(t)$.  Further, we restrict attention to geometric extensions, meaning we do not allow any extension of the field of constants, which gives us a trivial lower bound $\ram_K(G)\geq 1$ for any non-trivial group $G$ \cite[Page 106]{Stich}.  In addition, we know that class field theory will provide us with a better lower bound; see Theorem \ref{Abel-p}, Theorem \ref{Abel}, and Corollary \ref{minimality} contained herein.

Based on this related work and numerous families of examples, we have the following conjecture:

\begin{conjecture}[Restricted inverse Galois problem over Function Fields]  \label{Restricted IGP}
If $G$ is a nontrivial finite group then there exists a $G$-extension of $\mathbb{F}_p(t)$ and $$\ram_{\F_p(t)}(G)=\begin{cases} d+1 &  \text{ if $p\mid |G^{ab}|$} \\ \max(d,1) &  \text{ if $p \nmid |G^{ab}|$}  \end{cases}$$  where $d=d((G/p(G))^{ab})$.
\end{conjecture}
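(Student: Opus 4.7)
The plan is to match lower and upper bounds. The lower bound should come for free from the abelian case: given any $G$-extension $L/\F_p(t)$, the subfield fixed by $[G,G]$ is a $G^{ab}$-extension whose ramified places are a subset of those of $L$, so it suffices to bound ramification abelianly. Theorem \ref{Abel-p}, Theorem \ref{Abel}, and Corollary \ref{minimality} handle the abelian case, and since $G^{ab}/p(G^{ab}) \cong (G/p(G))^{ab}$ the integer $d$ in the statement is the same whether computed before or after abelianization, so the bounds of $\max(d,1)$ (tame) and $d+1$ (wild) transfer directly to $G$.

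For the upper bound I would work in two steps. First, I use class field theory to construct an abelian $G^{ab}$-extension $E/\F_p(t)$ ramified at exactly $d$ (respectively $d+1$) chosen places, with all wild ramification confined to a single designated place when $p \mid |G^{ab}|$. Second, I lift $E$ to a $G$-extension along the exact sequence $1 \to [G,G] \to G \to G^{ab} \to 1$ by a sequence of central embedding problems, keeping the ramification locus fixed throughout. The prime-to-$p$ steps should be accessible via Abhyankar's conjecture in the form proved by Raynaud and Harbater, applied over the geometric curve $\mathbb{P}^1_{\overline{\F_p}} \setminus S$; the $p$-part embedding problems are handled via the quasi-freeness of the fundamental group of an affine curve in characteristic $p$ (Harbater--Pop).

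The principal obstacle is the descent to $\F_p(t)$ together with exact control of the ramification locus. Over $\overline{\F_p}(t)$ the fundamental group is large enough to realize essentially any group, but one must impose rationality of inertia conjugacy classes so that the extension is geometric and defined over $\F_p(t)$ rather than just $\overline{\F_p}(t)$, all while keeping $|S|$ at the conjectured minimum. A rigidity-type analysis, or in specific cases an explicit realization via Belyi maps or Hurwitz spaces, is required to verify that no extra ramification is introduced; this is presumably why the statement remains a conjecture, and why the paper restricts to families in which the rationality obstruction can be resolved theoretically or by computer search.
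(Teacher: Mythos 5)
The statement you are proving is labelled a \emph{conjecture} in the paper, and the paper does not prove it in general: it only establishes the lower bound for all $G$ (Corollary \ref{minimality}) and verifies the upper bound for particular families (abelian groups, $p$-groups, $l$-groups, nilpotent groups, semiabelian groups with $d(G)=d(G^{ab})$, certain dihedral groups, and $S_3$ for $p\equiv 0,1 \pmod 3$). Your lower-bound argument is essentially the paper's: pass to the fixed field of $[G,G]$ and invoke the abelian theory, noting $G^{ab}/p(G^{ab})\cong (G/p(G))^{ab}$. One small caveat: when $G$ is perfect and $p\nmid |G^{ab}|$ this gives nothing, and the bound $\max(d,1)=1$ must instead come from the fact that a nontrivial geometric extension of $\F_p(t)$ always ramifies somewhere, which the paper takes from Stichtenoth.

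Your upper-bound strategy has two concrete gaps beyond the descent obstruction you already flag. First, the lifting of a $G^{ab}$-extension to a $G$-extension along $1\to [G,G]\to G\to G^{ab}\to 1$ cannot in general be refined into \emph{central} embedding problems: the kernel $[G,G]$ need not meet the center of $G$ (already for $G=S_3$ the kernel $\Z/3\Z$ is non-central), so the Scholz/central machinery the paper uses in Lemmas \ref{l1}--\ref{l3} applies only to $l$-groups and, via direct products, to nilpotent groups. Second, Raynaud--Harbater gives the minimal number of branch points over $\overline{\F_p}(t)$ as $d(G/p(G))+1$, which is governed by $d(G/p(G))$ rather than $d((G/p(G))^{ab})$; these can differ (e.g.\ for perfect $G$ of order prime to $p$, the first is $2$ and the second is $0$), so even the geometric count in your plan does not match the conjectured formula, and no amount of descent will fix that discrepancy. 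The paper's actual route is piecemeal: Carlitz--Hayes theory for abelian groups, freeness of the pro-$p$ fundamental group for $p$-groups, a wreath-product induction for semiabelian groups, Scholz embedding problems for $l$-groups, composita for nilpotent groups, and explicit polynomials plus Pollack's Hypothesis-H analogue for $D_8$ and $S_3$. Your outline is a reasonable research program, but as written it neither proves the conjecture nor reproduces the paper's partial results.
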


We provide proofs for abelian group, groups of prime power order, and nilpotent groups, as well as several other families of examples.

\section{Basic Results}

We begin with the basic building blocks, namely abelian groups and $p$-groups.  When tackling the abelian groups, it is useful to have a function field analogue of the Kronecker-Weber Theorem.  To do this, we turn to Carlitz-Hayes Theory \cite{Hayes}.

For each polynomial $M\in \F_p[t]$ we define the Carlitz polynomial $[M](x)$ with coefficients in $\F_p[t]$ recursively:
\begin{align*} [1](x)& = x\\ [t](x)&=x^p+t x \\  \left[t^n\right](x)&=[t]\left(\left[t^{n-1}\right](x)\right) \\ \left[c_nt^n+\cdots +c_1 t+c_0\right](x)&=c_n\left[t^n\right](x)+\cdots+c_1[t](x)+c_0[1]x. \end{align*} In addition, we use a similar definition with $1/t$ in place of $t$: $$\left[\frac{1}{t}\right](x)=x^p+\frac{x}{t}.$$

\vspace{12pt}
Let $K$ be a field extension of $\F_p(t)$.  We make $K$ into an $\F_p[t]$-module by letting $\F_p[t]$ act on $K$ through the Carlitz polynomials: $$M\cdot \alpha = [M](\alpha).$$  Define $$\Lambda_M=\{\lambda \in \overline{\F_p(t)} \mid [M](\lambda)=0\}.$$ Then $\F_p(t,\Lambda_M)/\F_p(t)$ is an abelian extension called a cyclotomic function field extension.  Note that $\Lambda_M$ is a free $\F_p[t]/M$-module of rank 1.  Choose $\sigma\in \Gal (\F_p(t,\Lambda_M)/\F_p(t))$ and let $\lambda$ be a generator of $\Lambda_M$.  Then $\sigma$ acts as $A$ on $\lambda$ for some $A\in \left(\F_p[t]/M\right)^\times$, and $\sigma$ acts by the Carlitz action $[A]$ on all the elements of $\Lambda_M$.  We write $A$ as $A_\sigma$.  Then define $$\Phi(M)=|\left(\F_p[t]/M\right)^\times|.$$

\begin{theorem}[Carlitz] The map $\sigma \longmapsto A_\sigma$ is then an isomorphism $$\Gal (\F_p(t,\Lambda_M)/\F_p(t)) \longrightarrow (\F_p[t]/M)^\times.$$
\end{theorem}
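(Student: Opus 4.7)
The plan is to verify well-definedness, injectivity, and surjectivity of the map $\sigma\mapsto A_\sigma$ in turn. The first two are formal consequences of the definitions; the real content is the degree identity $[\F_p(t,\Lambda_M):\F_p(t)] = \Phi(M)$, which combined with injectivity forces surjectivity.

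For well-definedness, any $\sigma\in\Gal(\F_p(t,\Lambda_M)/\F_p(t))$ fixes $\F_p[t]$ pointwise, hence commutes with every Carlitz action $[N]$, so $\sigma|_{\Lambda_M}$ is an $\F_p[t]/M$-module automorphism of a free rank-one module and must be multiplication by a unique unit $A_\sigma\in(\F_p[t]/M)^\times$. Multiplicativity $A_{\sigma\tau} = A_\sigma A_\tau$ is immediate from $(\sigma\tau)(\lambda) = \sigma(\tau(\lambda))$. For injectivity, $A_\sigma = 1$ means $\sigma$ fixes a generator $\lambda$ of $\Lambda_M$; since $\F_p(t,\Lambda_M) = \F_p(t)(\lambda)$ we obtain $\sigma = \mathrm{id}$.

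For surjectivity I would prove $[\F_p(t,\Lambda_M):\F_p(t)] = \Phi(M)$ by induction on the number of distinct irreducible factors of $M$. The inductive step is a standard compositum argument: if $\gcd(M_1,M_2) = 1$, then the CRT decomposition $\F_p[t]/M \cong \F_p[t]/M_1 \times \F_p[t]/M_2$ gives $\Lambda_M = \Lambda_{M_1}\oplus\Lambda_{M_2}$, so $\F_p(t,\Lambda_M)$ is the compositum of $\F_p(t,\Lambda_{M_i})$. Linear disjointness of the two factors will follow from the fact (delivered by the base case) that $\F_p(t,\Lambda_{P^n})/\F_p(t)$ is totally ramified at $P$ and unramified elsewhere, and then the multiplicativity $\Phi(M_1)\Phi(M_2) = \Phi(M_1 M_2)$ closes the induction.

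The main obstacle is the base case $M = P^n$ with $P$ irreducible of degree $d$, where $\Phi(P^n) = p^{(n-1)d}(p^d - 1)$. Because $[P](y) = y\,g(y)$ for an explicit $g\in\F_p[t][y]$ whose constant term $g(0)$ equals the coefficient of $y$ in $[P](y)$, namely $P$, iteration gives $[P^n](x) = [P^{n-1}](x)\cdot g([P^{n-1}](x))$. Thus $f(x) := g([P^{n-1}](x)) = [P^n](x)/[P^{n-1}](x)$ is monic of degree exactly $\Phi(P^n)$ and annihilates any $\lambda\in\Lambda_{P^n}\setminus\Lambda_{P^{n-1}}$. I would then show that $f$ is Eisenstein at $P$: the constant term is $f(0) = g(0) = P$, so $P\parallel f(0)$; for the middle coefficients I would invoke the congruence $[P](y)\equiv y^{p^d}\pmod{P}$, which iterates to $[P^{n-1}](x)\equiv x^{p^{(n-1)d}}\pmod{P}$ and hence $f(x)\equiv x^{\Phi(P^n)}\pmod{P}$. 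Eisenstein then gives both the irreducibility of $f$, yielding $[\F_p(t,\lambda):\F_p(t)] = \Phi(P^n)$, and the total ramification at $P$ needed to justify the compositum step. The most delicate link in the chain is the mod-$P$ congruence $[P](y)\equiv y^{p^d}\pmod{P}$, which is what I would work out most carefully; everything else assembles mechanically from it.
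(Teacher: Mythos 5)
Your proof is correct and follows the standard argument from Carlitz--Hayes theory (well-definedness and injectivity from the rank-one module structure, then the degree identity $[\F_p(t,\Lambda_M):\F_p(t)]=\Phi(M)$ via CRT reduction to prime powers and the Eisenstein criterion for $[P^n](x)/[P^{n-1}](x)$ at $P$); the paper states this theorem as a cited classical result and offers no proof of its own, so there is nothing to compare against. One small imprecision: $\F_p(t,\Lambda_{P^n})/\F_p(t)$ is not unramified away from $P$, since the infinite place ramifies with index $p-1$ (as the paper itself notes via Hayes), but your linear-disjointness step only uses that each $P_i$ is totally ramified in its own cyclotomic factor and unramified, being a finite prime coprime to the conductor, in the compositum of the others, so the argument goes through unchanged.
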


\begin{theorem}[Hayes] Every finite abelian extension of $\F_p(t)$ lies in $$\F_{p^s}\left(t,\Lambda_M,\Lambda_{1/t^n}\right)$$ for some $s\geq 1$, $n\geq 1$, and $M\in \F_p[t]$, where $\Lambda_{1/t^n}$ is the set of roots of the Carlitz polynomial $\left[1/t^n\right](x)$ built with $1/t$ in place of $t$.
\end{theorem}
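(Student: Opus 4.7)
The plan is to combine function field class field theory with the explicit reciprocity furnished by the Carlitz module. First, I would separate off the constant field: if $L/\F_p(t)$ is a finite abelian extension and $\F_{p^s}$ is its exact field of constants, then $L\cdot\F_{p^s}(t)/\F_{p^s}(t)$ is a geometric abelian extension. It therefore suffices to show that every finite geometric abelian extension of $\F_p(t)$ is contained in some $\F_p(t,\Lambda_M,\Lambda_{1/t^n})$; adjoining $\F_{p^s}$ at the end produces the desired containment.

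Next, by class field theory for $\F_p(t)$, every finite abelian extension has a conductor $\mathfrak{f}$, a formal divisor supported on finitely many places, and is contained in the ray class field of conductor $\mathfrak{f}$. Writing $\mathfrak{f}=M\cdot\infty^n$, where $M\in\F_p[t]$ collects the finite-prime contributions and $\infty^n$ records the depth of ramification at infinity, I would reduce the problem to a place-by-place identification: show that the ray class field of conductor $\mathfrak{f}$ is contained in the compositum of the ray class fields whose conductors are supported only at $M$ and only at $\infty^n$.

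The main step is the explicit identification of these local ray class fields with Carlitz cyclotomic fields. At each finite prime dividing $M$, I would show that $\F_p(t,\Lambda_M)$ is totally ramified at every prime divisor of $M$ with inertia realizing the local unit group modulo $1+P^{e}\F_p[t]_{(P)}$, and unramified outside $M$ and $\infty$, thereby matching (via the preceding theorem) the ray class field of conductor $M$ up to a cyclic constant-field factor that is absorbed into $\F_{p^s}$. The parallel construction with $1/t$ in place of $t$ plays the symmetric role at the infinite place: $\F_p(t,\Lambda_{1/t^n})$ is totally ramified at $\infty$ of depth $n$, unramified away from $\infty$ and the origin, and realizes the local ray class group at infinity. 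Concatenating these identifications, every geometric abelian extension of conductor $M\cdot\infty^n$ embeds into $\F_p(t,\Lambda_M)\cdot\F_p(t,\Lambda_{1/t^n})$.

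The hard part will be the ramification analysis of $\F_p(t,\Lambda_{1/t^n})$, since the Carlitz polynomials are naturally adapted to the prime $t=0$ and the infinite prime requires the symmetric construction. One must verify that the $1/t$-Carlitz extension is wildly totally ramified at $\infty$ with the correct inertia, and that whatever ramification it contributes at finite primes factors through a tame constant-field extension that is consumed by the $\F_{p^s}$ factor. Once these local facts are in hand, splicing them through Artin reciprocity completes the argument.
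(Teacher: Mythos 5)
This theorem is quoted in the paper as a known result of Hayes and is not proved there, so there is no internal proof to compare against; the relevant benchmark is Hayes's original argument, and your outline is essentially that argument: split off the constant field, reduce to geometric extensions via conductors and ray class fields, and identify the ray class fields place by place with the Carlitz cyclotomic fields $\F_p(t,\Lambda_M)$ at the finite primes and the $1/t$-construction at infinity. That is the right skeleton. Two caveats. First, the entire mathematical content lives in the step you defer to the end: showing that the compositum actually \emph{exhausts} the ray class field of conductor $M\cdot\infty^n$, not merely sits inside something with the right conductor. Hayes does this by computing the Artin symbol explicitly on torsion points (the Frobenius at a finite prime $\mathfrak{p}\nmid M$ acts on $\Lambda_M$ as the Carlitz action $[\mathfrak{p}]$), which identifies $\Gal(\F_p(t,\Lambda_M)/\F_p(t))\cong(\F_p[t]/M)^\times$ with the correct quotient of the idele class group, and then a degree/index count closes the gap. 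As written, your proposal asserts the matching rather than deriving it.

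Second, there is a concrete error in your last paragraph: the ramification that $\F_p(t,\Lambda_{1/t^n})$ contributes away from $\infty$ is \emph{not} absorbed by the constant-field factor $\F_{p^s}$. Writing $u=1/t$, the field $\F_p(u,\Lambda_{u^n})$ is the $u^n$-th Carlitz cyclotomic field for $\F_p[u]$; by Hayes's Theorem 3.2 (the same fact the paper invokes as $e_\infty=p-1$) it is tamely ramified of index $p-1$ at the infinite place of $\F_p(u)$, which is the place $t=0$ of $\F_p(t)$. That is genuine geometric ramification, not a constant-field extension, so it cannot be ``consumed'' by $\F_{p^s}$. This does not break the containment you are proving --- an overfield is allowed to be ramified at extra places --- but the correct resolution is different: one either restricts to the subfield of $\F_p(t,\Lambda_{1/t^n})$ fixed by $\F_p^\times$ (the wildly, totally ramified part at $\infty$, which is unramified at $t=0$), as Hayes does when he presents the three pieces as linearly disjoint, or one simply notes that the extra tame ramification at $t=0$ is harmless for the inclusion statement. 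You should repair that sentence before relying on it.
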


\subsection{$p$-groups}

We first consider the case where $G$ is an abelian $p$-group.

\begin{theorem}\label{Abel-p} If $G$ is a nontrivial finite abelian $p$-group, then Conjecture \ref{Restricted IGP} holds.  Namely, there exists a $G$-extension of $\mathbb{F}_p(t)$ ramified at exactly 1 prime (counting the infinite prime), and there are no unramified $G$-extensions.
\end{theorem}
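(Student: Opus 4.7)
The claim has two parts — the non-existence of unramified $G$-extensions and the existence of a $G$-extension ramified at exactly one prime — which I would treat separately. For the lower bound, any nontrivial geometric extension $L/\F_p(t)$ corresponds to a finite separable cover $C\to\mathbb P^1_{\F_p}$ of degree $n=|G|\ge 2$. If this cover were everywhere unramified, Riemann--Hurwitz would force $2g_C-2=n(2\cdot 0-2)$, whence $g_C=1-n<0$, impossible. Equivalently, by class field theory the degree-zero divisor class group of $\mathbb P^1_{\F_p}$ is trivial, so no nontrivial unramified abelian geometric extension of $\F_p(t)$ exists. Hence $\ram_{\F_p(t)}(G)\ge 1$.

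For the upper bound, write $G\cong\prod_{i=1}^r\Z/p^{a_i}\Z$ and plan to construct $r$ mutually independent cyclic extensions $L_i/\F_p(t)$ of degrees $p^{a_i}$, each ramified only at the infinite prime, and then form the compositum. For a single cyclic factor I would use Artin--Schreier--Witt theory: a Witt vector $\underline f=(f_0,\ldots,f_{a-1})\in W_a(\F_p[t])$ with each component a nonzero polynomial of positive degree prime to $p$ defines, via $F(\underline y)-\underline y=\underline f$, a cyclic $\Z/p^a\Z$-extension of $\F_p(t)$. It is unramified at every finite prime because the right-hand side is integral there and the strict Henselization of the local ring at any finite prime solves Artin--Schreier--Witt equations with integral right-hand side; being nontrivial, the extension must therefore be ramified at $\infty$.

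The main obstacle is to verify that the compositum $L=L_1\cdots L_r$ has Galois group the full product $G$, and not a proper quotient. By Pontryagin duality this amounts to $\F_p$-linear independence of the chosen Witt vectors $\underline f_i$ in $W_a(\F_p(t))/(F-1)W_a(\F_p(t))$ with $a=\max_i a_i$. This can be arranged by choosing the top-layer polynomial degrees pairwise distinct and coprime to $p$: a hypothetical relation $\sum_i c_i\underline f_i=(F-1)\underline g$ forces, reading layer by layer from the top, a polynomial of degree coprime to $p$ to equal a $p$th power plus lower-layer corrections, which is impossible. At the Artin--Schreier level ($a=1$) this collapses to the familiar fact that polynomials in $\F_p[t]$ of pairwise distinct positive degrees coprime to $p$ are $\F_p$-linearly independent modulo $\{h^p-h:h\in\F_p(t)\}$, and the Witt-vector generalization is routine.
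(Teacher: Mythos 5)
Your proposal is correct in substance but takes a genuinely different route from the paper. The paper stays inside Carlitz--Hayes theory at the place $1/t$: it identifies $\Gal(\F_p(t,\Lambda_{1/t^{n+1}})/\F_p(t))$ with $(\F_p[1/t]/(1/t^{n+1}))^\times$, whose $p$-part is the group of one-units, and then cites a lemma of Koch to conclude that the inverse limit of these $p$-parts --- the Galois group of the maximal abelian $p$-extension ramified only at infinity --- is free abelian pro-$p$ of countable rank, so every finite abelian $p$-group is a quotient; ramification only at $\infty$ is built into the cyclotomic construction, and minimality is, as in your Riemann--Hurwitz/class-group argument, immediate for geometric extensions. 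You instead construct the extensions explicitly via Artin--Schreier--Witt vectors with polynomial entries, deducing unramifiedness at finite places from integrality and independence from a degree count. Your route is more elementary and explicit, and is essentially the mechanism behind Koch's Theorem 9.1, which the paper invokes anyway for nonabelian $p$-groups in Theorem \ref{p}; the paper's route buys the stronger structural statement about the full pro-$p$ group and keeps the whole abelian case inside one cyclotomic framework. Two points to tighten. First, ``nontrivial and unramified at every finite prime'' does not by itself exclude a constant-field subextension, and the theorem concerns geometric extensions; but since $\deg f_0$ is positive and prime to $p$, the first layer is totally (wildly) ramified at $\infty$, and because $p\Z/p^a\Z$ is the Frattini subgroup of $\Z/p^a\Z$ the inertia group at $\infty$ must then be all of $\Z/p^a\Z$, so the extension is totally ramified at $\infty$ and hence geometric --- say this explicitly. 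Second, you can avoid Witt-vector arithmetic in the independence step: a subgroup of $\prod_i \Z/p^{a_i}\Z$ that surjects onto each factor is the whole group as soon as its image modulo the Frattini subgroup is everything, so it suffices that the first components $f_{1,0},\dots,f_{r,0}$ be $\F_p$-linearly independent modulo $\{h^p-h : h\in\F_p(t)\}$, which your distinct-positive-degrees-prime-to-$p$ condition gives at once.
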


\begin{proof}

As we are only considering the geometric case, minimality is immediate.  Note that
\begin{align*} H_{n,p} &= \Gal\left(\F_p\left(t,\Lambda_{1/t^{n+1}}\right)/\F_p(t)\right) \\
                       &\cong \left(\F_p\left[1/t\right]/\left(1/t^{n+1}\right)\right)^\times \\
                       &\cong \{ 1+a_1+\cdots +a_nt^n \mid a_i\in \F_p \} \times \F_p^\times \end{align*}
by Hayes \cite{Hayes}.  Then by Lemma 4.4 of Koch \cite{Koch}, we have that $$H_p=\varprojlim{H_{n,p}/\F_p^\times}$$ where $H_p$ is the Galois group of the maximal $p$-extension of $\F_p(t)$, is a free abelian pro-$p$ group on countably many generators.  Note that by construction only the infinite prime ramifies as ramification of finite primes is contained in an extension of the form $\mathbb{F}_p(t,\Lambda_M)$ for abelian groups.  Then every finite abelian $p$-group appears as a quotient of this group.

\end{proof}

To understand  general $p$-extensions, let $\overline{k}_p$ be the maximal $p$-extension of a field $k$ ramified only at infinity.  Denote $G_{k,p}=\Gal\left(\overline{k}_p/k\right)$.

\begin{theorem}\cite[p. 93, Thm 9.1]{Koch}\label{K} If $k$ is a field of characteristic $p$, then $G_{k,p}$ is a free pro-$p$ group with generator rank $$\dim_{\F_p}{k^+/\mathfrak{p}(k^+)}$$ where we have put $$\mathfrak{p}(x)=x^p-x.$$

\end{theorem}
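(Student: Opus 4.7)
The plan is to invoke the standard cohomological characterization of free pro-$p$ groups: a pro-$p$ group $G$ is free if and only if $H^2(G, \F_p) = 0$, in which case its minimal number of topological generators equals $\dim_{\F_p} H^1(G, \F_p)$. The whole proof therefore reduces to computing the mod-$p$ cohomology of $G_{k,p}$ in degrees $1$ and $2$.

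First I would work with the absolute Galois group $G_k = \Gal(k_s/k)$ and apply the Artin-Schreier short exact sequence of continuous $G_k$-modules
\begin{equation*}
0 \longrightarrow \F_p \longrightarrow k_s \xrightarrow{\mathfrak{p}} k_s \longrightarrow 0,
\end{equation*}
surjectivity holding since any equation $y^p - y = a$ with $a \in k_s$ splits in $k_s$. The long exact sequence in Galois cohomology, combined with the vanishing $H^i(G_k, k_s) = 0$ for every $i \geq 1$ (additive Hilbert 90, obtained from the normal basis theorem applied to each finite Galois subextension $L/k$, which makes $L^+$ a free $\F_p[\Gal(L/k)]$-module, followed by a direct-limit argument), yields
\begin{equation*}
H^1(G_k, \F_p) \cong k^+/\mathfrak{p}(k^+) \qquad \text{and} \qquad H^i(G_k, \F_p) = 0 \text{ for } i \geq 2.
\end{equation*}

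Next I would transfer these computations from $G_k$ to its pro-$p$ quotient $G_{k,p} = G_k/N$. Since $\F_p$ has $p$-power order, every continuous homomorphism $G_k \to \F_p$ factors through $G_{k,p}$, so the inflation $H^1(G_{k,p}, \F_p) \to H^1(G_k, \F_p)$ is an isomorphism, giving the desired formula for the generator rank via the Burnside basis theorem. For the vanishing of $H^2$, I would use the five-term inflation-restriction sequence attached to $1 \to N \to G_k \to G_{k,p} \to 1$: the fact that inflation is already surjective on $H^1$ forces the restriction map to vanish, identifying $H^2(G_{k,p}, \F_p)$ with the kernel of inflation into $H^2(G_k, \F_p) = 0$, which in turn is isomorphic to $\mathrm{Hom}(N, \F_p)^{G_{k,p}}$. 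Any nonzero $G_{k,p}$-invariant continuous homomorphism $\phi \colon N \to \F_p$ would exhibit $G_k/\ker\phi$ as a pro-$p$ quotient of $G_k$ strictly larger than $G_{k,p}$, contradicting the maximality of $G_{k,p}$; hence $H^2(G_{k,p}, \F_p) = 0$.

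The main technical obstacle is the vanishing of $H^i(G_k, k_s)$ for $i \geq 1$: at each finite level one must invoke the normal basis theorem to conclude cohomological triviality of $L^+$ as an $\F_p[\Gal(L/k)]$-module, and then verify that taking the direct limit over finite Galois subextensions preserves the vanishing. The latter is immediate because continuous cochain cohomology commutes with filtered colimits of discrete coefficient modules, but it is the one step whose omission would leave a genuine gap rather than a routine verification.
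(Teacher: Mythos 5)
The paper gives no proof of this statement—it is imported verbatim from Koch (Theorem 9.1)—so there is no internal argument to compare yours against; what matters is whether your reconstruction is sound, and it is. Your proof is in substance the standard one (essentially Koch's, and the argument in Serre's \emph{Galois Cohomology}): the Artin--Schreier sequence $0\to\F_p\to k_s\xrightarrow{\ \mathfrak{p}\ }k_s\to 0$ is exact because $x^p-x-a$ is separable, additive Hilbert 90 via the normal basis theorem kills $H^i(G_k,k_s)$ for $i\geq 1$, and the long exact sequence yields $H^1(G_k,\F_p)\cong k^+/\mathfrak{p}(k^+)$ and $H^2(G_k,\F_p)=0$. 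Your descent to the maximal pro-$p$ quotient is also correct: the five-term sequence identifies $H^2(G_{k,p},\F_p)$ with $\mathrm{Hom}(N,\F_p)^{G_{k,p}}$ once inflation on $H^1$ is known to be surjective, and a nonzero invariant homomorphism would produce a strictly larger pro-$p$ quotient of $G_k$, which is the standard lemma that a profinite group of $p$-cohomological dimension at most one has free maximal pro-$p$ quotient. One caveat you should make explicit: your computation concerns the \emph{full} maximal pro-$p$ quotient of $G_k$, which is what Koch's theorem and the formula $\dim_{\F_p}k^+/\mathfrak{p}(k^+)$ refer to, whereas the sentence preceding the theorem in the paper defines $G_{k,p}$ as the Galois group of the maximal $p$-extension ``ramified only at infinity.'' That discrepancy lies in the paper's setup rather than in your argument, but stating which group you are computing with would close the gap between the two readings.
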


\begin{theorem}\label{p} If $G$ is a nontrivial finite $p$-group, then Conjecture \ref{Restricted IGP} holds.  Namely, there exists a $G$-extension of $\mathbb{F}_p(t)$ ramified at exactly 1 prime (counting the infinite prime), and there are no unramified $G$-extensions.
\end{theorem}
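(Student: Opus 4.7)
The plan is to apply Theorem \ref{K} with $k=\F_p(t)$. That theorem tells us $G_{\F_p(t),p}$ is a free pro-$p$ group whose generator rank equals $\dim_{\F_p}\bigl(\F_p(t)^+/\mathfrak{p}(\F_p(t)^+)\bigr)$, and any finite $p$-group $G$ with $d(G)$ generators is a continuous quotient of a free pro-$p$ group of rank $\geq d(G)$. Hence the existence of a $G$-extension of $\F_p(t)$ ramified only at infinity reduces to showing that this generator rank is at least $d(G)$. In fact I will show it is infinite.

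To see infiniteness, I would exhibit the family $\{t^n : n\geq 1,\ \gcd(n,p)=1\}$ as an $\F_p$-linearly independent set in $\F_p(t)^+/\mathfrak{p}(\F_p(t)^+)$. Suppose $\sum_i c_i t^{n_i} = g^p-g$ with $g\in\F_p(t)$ and distinct $n_i$ coprime to $p$. If $g$ had a pole of order $m\geq 1$ at some finite place, then $g^p$ would have a pole of order $pm>m$ there, unmatched by the order-$m$ pole of $g$, so $g^p-g$ would have a finite pole; this contradicts the left-hand side being a polynomial. Hence $g\in\F_p[t]$. Writing $D=\deg g$, if $D\geq 1$ then $\deg(g^p-g)=pD$ is divisible by $p$, while the left-hand side has degree $\max_i n_i$ coprime to $p$, a contradiction. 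So $D=0$, giving $g\in\F_p$, $g^p-g=0$, and all $c_i=0$.

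Combining the two steps, a continuous surjection $G_{\F_p(t),p}\twoheadrightarrow G$ exists and yields a $G$-extension of $\F_p(t)$ ramified only at the infinite prime. For minimality: in the geometric setting every nontrivial extension ramifies at some place, so $\ram_{\F_p(t)}(G)\geq 1$, hence $\ram_{\F_p(t)}(G)=1$. This matches Conjecture \ref{Restricted IGP}: since $G$ is a nontrivial $p$-group, $p(G)=G$ gives $d\bigl((G/p(G))^{ab}\bigr)=0$, and $G^{ab}$ is a nontrivial $p$-group so $p\mid |G^{ab}|$, predicting $d+1=1$.

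The main obstacle is the linear-independence computation in the second paragraph; the rest is immediate from the freeness provided by Theorem \ref{K} and the standing geometric hypothesis. One should also be careful to interpret $k^+$ in Theorem \ref{K} as the additive group of $k$, so that $\mathfrak{p}(k^+)=\{g^p-g : g\in k\}$, which is precisely the setting in which the argument above applies.
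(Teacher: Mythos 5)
Your argument is essentially the paper's: both hinge on Theorem \ref{K} giving that $G_{\F_p(t),p}$ is a free pro-$p$ group of infinite rank, so that every finite $p$-group arises as a continuous quotient; your explicit check that the classes of $t^n$ with $p\nmid n$ are independent in $\F_p(t)/\mathfrak{p}(\F_p(t))$ is a worthwhile detail that the paper leaves to the citation of Koch. The one step to tighten is the passage from ``unramified outside infinity'' to ``ramified at exactly one prime'': an arbitrary quotient of $G_{\F_p(t),p}$ may involve a constant-field extension, which is unramified everywhere and is excluded by the standing geometric hypothesis, so you should choose the surjection so that its induced map on Frattini quotients is dual to a subspace of the span of the classes $t^n$ (avoiding the constant classes coming from $\F_p/\mathfrak{p}(\F_p)$); then the socle, and hence the whole solution field, is geometric, so it must ramify somewhere and therefore ramifies exactly at infinity. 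The paper's own proof of this theorem glosses over the same point.
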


\begin{proof}

Consider the extension $\overline{k}_p/k$ defined above where $k=\F_p(t)$.  Since every $p$-extension has a nontrivial abelian subextension, by Theorem \ref{Abel-p} and Carlitz-Hayes Theory we know this subextension must be contained in a field of the form $$\F_{p^s}\left(t,\Lambda_{1/t^n}\right)/\F_p(t)$$ which is only ramified at infinity.

Conversely, $\overline{k}_p/k$ cannot be ramified at a prime other than the prime at infinity, by its Artin-Schreier construction.  Thus $G_{k,p}$ is only ramified at the prime at infinity.  Since it is free pro-$p$ on countably many generators by Theorem \ref{K}, every finite $p$-group occurs as a subextension.

\end{proof}

\subsection{Abelian groups}

We now turn our attention to abelian groups:

\begin{theorem}\label{Abel} If $G$ is a nontrivial finite abelian group, then Conjecture \ref{Restricted IGP} holds.
\end{theorem}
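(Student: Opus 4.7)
The plan is to split $G=G_p\times G_{p'}$ with $G_p$ the Sylow $p$-subgroup of $G$; since $G$ is abelian this coincides with $p(G)$, so $d=d(G_{p'})$. I would then treat the two factors separately: Theorem~\ref{Abel-p} handles $G_p$ at the infinite prime, and Carlitz--Hayes theory handles $G_{p'}$ at finite primes.

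For the upper bound, write $G_{p'}=\bigoplus_{i=1}^{d}\Z/n_i\Z$ in invariant-factor form. For each $i$, pick pairwise distinct irreducible $P_i\in\F_p[t]$ of degree $f_i$ large enough that $n_i\mid(p^{f_i}-1)/(p-1)$ (such $f_i$ exist since $\gcd(n_i,p)=1$). The subfield of the cyclotomic function field $\F_p(t,\Lambda_{P_i})$ fixed by the inertia $\F_p^\times$ at $\infty$ is then a geometric cyclic extension of order $(p^{f_i}-1)/(p-1)$ ramified only at $P_i$, from which one extracts a $\Z/n_i$-quotient. These $d$ cyclic extensions are linearly disjoint, because each is totally ramified at its own prime while the others are unramified there, so their compositum realizes $G_{p'}$ with exactly $d$ ramified places. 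If $p\mid|G|$, I then adjoin the $G_p$-extension ramified only at $\infty$ furnished by Theorem~\ref{Abel-p}; coprimality of $|G_p|$ and $|G_{p'}|$ gives a $G$-extension ramified at exactly $d+1$ places. If $p\nmid|G|$, the $G_{p'}$-construction already uses $\max(d,1)$ places (nontrivial $G$ forces $d\geq 1$).

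For the lower bound, let $L/\F_p(t)$ be a geometric abelian $G$-extension with ramified set $S$. Because $\F_p(t)$ admits no nontrivial everywhere-unramified geometric abelian extension (trivial Picard of $\mathbb{P}^1_{\F_p}$), the inertia subgroups generate: $G=\sum_{v\in S}I_v$. Each $I_v$ factors as a $p$-group (wild inertia) times a cyclic prime-to-$p$ group (tame quotient), so projecting to $G_{p'}$ expresses it as a sum of $|S|$ cyclic subgroups, forcing $|S|\geq d$. Together with $|S|\geq 1$, this settles the case $p\nmid|G|$.

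The main obstacle will be the additional ``$+1$'' in the case $p\mid|G|$. The intended strategy is a global class-field-theoretic obstruction: the product formula on $\mathbb{P}^1_{\F_p}$, combined with the diagonal embedding $\F_p^\times\hookrightarrow\prod_{v\in S}\mathcal{O}_v^\times$, imposes a nontrivial relation on the local characters cutting out $L$. I would then aim to show that with only $|S|=d$ ramified places this relation is incompatible with simultaneously realizing the minimal $G_{p'}$-quotient and a nontrivial $G_p$-quotient, forcing an additional ramified place to absorb the wild inertia. Making this obstruction precise --- tracking the conductor contributions at each place, and exploiting that $\F_p^\times$ is the full group of global units --- is the delicate step, and I expect it to be the technical heart of the proof.
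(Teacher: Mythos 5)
Your upper-bound construction and your lower bound $|S|\geq d$ are sound and essentially reproduce the paper's argument: the paper likewise realizes the prime-to-$p$ part inside the real subfields of cyclotomic function fields $\F_p(t,\Lambda_{M_i})$ with $(p-1)n_i\mid\Phi(M_i)$, and adjoins the $p$-part from Theorem \ref{Abel-p}. Your derivation of $|S|\geq d$ from ``inertia generates the Galois group'' together with cyclicity of tame inertia is a genuinely different (and cleaner) route than the paper's, which instead embeds $K$ into $\F_p(t,\Lambda_M,\Lambda_{1/t^n})$ and counts generators of $\prod_i(\F_p[t]/\pi_i^{r_i})^\times$; both are valid for this half of the statement.

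The genuine gap is exactly where you flagged it: the extra ``$+1$'' when $p\mid|G|$. You offer only a strategy, not a proof, so the proposal is incomplete --- and this step cannot be completed, because the claimed inequality is false. Abelian $p$-extensions of $\F_p(t)$ need not ramify at infinity: Artin--Schreier theory (equivalently, the wild part $1+\pi\F_p[t]/\pi^r$ of $(\F_p[t]/\pi^r)^\times$) produces $\Z/p\Z$-extensions whose conductor is supported at a single finite prime, so the wild inertia can be absorbed at a finite place that is already ramified for the tame part. Concretely, take $p=3$ and $\pi=t^2+1$: the compositum of $\F_3(t)(\sqrt{t^2+1})$ and the splitting field of $y^3-y=t/(t^2+1)$ is a geometric $\Z/6\Z$-extension of $\F_3(t)$ ramified only at $(\pi)$ (infinity splits in the quadratic part and $t/(t^2+1)$ is regular there), whereas the conjecture predicts $d+1=2$ ramified places. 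So no product-formula or conductor obstruction of the kind you propose can exist. For what it is worth, the paper's own proof of this step rests on the assertion that $[\F_p(t,\Lambda_M):\F_p(t)]$ is always prime to $p$, which fails for non-squarefree $M$; the difficulty you identified is a flaw in the statement itself, not merely in your write-up.
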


\begin{proof}

Write $d=d(G/p(G))$.

First, we consider the case where $|G|$ is prime to $p$.  Then write $$G\cong \mathbb{Z}/n_1\mathbb{Z} \times \cdots \times \mathbb{Z}/n_d\mathbb{Z}.$$  Then for each $i$ we can choose a nonzero irreducible $M_i\in \mathbb{F}_p[t]$ such that $$\Phi(M_i)\equiv 0 \pmod{(p-1)n_i}$$  and the $M_i$ are distinct and nonassociate. It follows that $\mathbb{Z}/n_i\mathbb{Z}$ is isomorphic to a quotient of $(\mathbb{F}_p[t]/M_i)^{\times}$.  Thus, by taking a direct product, $G$ is isomorphic to $\Gal(K/\mathbb{F}_p(t))$ where $K$ is a subfield of the compositum of the $M_i$th cyclotomic function fields $$\mathbb{F}_p(t,\Lambda_{M_i})/\mathbb{F}_p(t)$$ for $1\leq i \leq d$.

Note that since each  $M_i$ is irreducible and pairwise nonassociate, $K$ is ramified at exactly $d$ finite primes.  Also, since by a Theorem 3.2 in \cite{Hayes} $e_\infty=p-1$ in the full $M_i$th cyclotomic field, by our choice of $M_i$, $K$ is a subfield of the compositum of the real portion of the $M_i$th cyclotomic fields (meaning the portion not ramified at the infinite prime).  Thus, there are $d$ primes ramified.

Conversely, suppose $K/\mathbb{F}_p(t)$ is a geometric extension with Galois group $G$ and is ramified at the finite primes $\pi_1,\ldots, \pi_k$ and possibly at the infinite prime, and no others.  By Carlitz-Hayes, $K$ is a subfield of a cyclotomic function field $L=\mathbb{F}_{p}(t,\Lambda_M,\Lambda_{1/t^n})$ for some $n\geq 1$ and $$M=\pi_1^{r_1}\cdots \pi_k^{r_k}.$$

If $K$ is tamely ramified at infinity, then $K$ is a subfield of $L^{+}=\mathbb{F}_p(t,\Lambda_M)$.  Then $G$ is isomorphic to a quotient of $$(\mathbb{F}_p[t]/\pi_1^{r_1})^{\times}\times \cdots \times (\mathbb{F}_p[t]/\pi_k^{r_k})^{\times}$$ and hence has less than or equal to $k$ generators.

If $K$ is not tamely ramified at infinity, then let $K^{+}=K\cap L^{+}$, and $G^{+}=\Gal(K^{+}/\mathbb{F}_p(t))$.  Then, as above, $G^{+}$ has at most $k$ generators.  Thus $G$ has less than or equal to $k$ generators.

Now, suppose $|G|$ is not prime to $p$ (See Figure 1).  We obtain the desired extension by using the above for the prime-to-$p$ part and then using Theorem \ref{p} to obtain the $p$-portion.  Since $G$ is abelian, we can then realize $G$ by taking the compositum of these two fields.

\begin{figure}\label{Figure1}\caption{Breakdown of an Abelian Extension} $$\xymatrix@C=10pt@R=12pt{
  & & &  &  &   \F_p\left(t,\Lambda_M,\Lambda_{1/t^n}\right)\ar@{-}[dr]\ar@{-}[dl] &  \\
  \F_p\left(t,\Lambda_{1/t^n}\right)\ar@{-}[drr] & & &  &  L\ar@{-}[dr]^{p(G)}\ar@{-}[dll]_{G/p(G)} &  & \F_p(t,\Lambda_M)\ar@{-}[dl]   \\
  & & N\ar@{-}[drr]_{p(G)} &  &  &  K\ar@{-}[dl]^{G/p(G)}  &  \\
  & & &  &  \F_p(t) &   & }
   $$
 \end{figure}
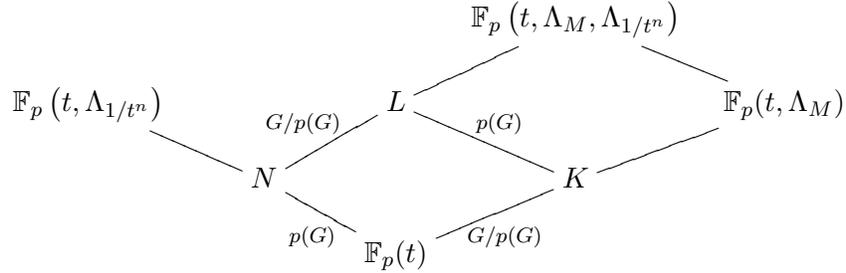

Conversely, suppose we have a geometric $G$-extension $K/\mathbb{F}_p(t)$.  By the above, the prime-to-$p$ part has at least $d$ ramified primes.  Thus it remains to show that every abelian $p$-extension of $\F_p(t)$ is ramified at the infinite prime.  However, this follows from Carlitz-Hayes theory and the fact that the degree of cyclotomic extensions of the form $\F_p(t,\Lambda_M)/\F_p(t)$ are always prime-to-$p$.

\end{proof}

Then we have also established the following:

\begin{corollary}\label{minimality}
$$\ram_{\F_p(t)}(G)\geq\begin{cases} d+1 &  \text{ if $p\mid |G^{ab}|$} \\ \max(d,1) &  \text{ if $p \nmid |G^{ab}|$}  \end{cases}$$
holds for any nontrivial finite group $G$.
\end{corollary}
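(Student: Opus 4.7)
The plan is to deduce the lower bound directly from the abelian case established in Theorem~\ref{Abel}, by restricting attention to the maximal abelian subextension of any putative $G$-extension. Given any geometric $G$-extension $K/\F_p(t)$ (if none exists, the corollary is vacuous), let $L = K^{[G,G]}$ be the fixed field of the commutator subgroup, so $H := \Gal(L/\F_p(t)) \cong G^{ab}$. Any place of $\F_p(t)$ ramified in $L$ is ramified in $K$, which yields the chain
$$\ram_{\F_p(t)}(G) \;\geq\; \#\{\text{places of }\F_p(t)\text{ ramified in }L\} \;\geq\; \ram_{\F_p(t)}(H).$$
The claim will follow once I verify that the invariants attached to $G$ and to $H$ on the two sides of this inequality agree.

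The key group-theoretic identity is $(G/p(G))^{ab} \cong H/p(H)$, with both quotients equal to the maximal prime-to-$p$ abelian quotient of $G$. For the left side: any homomorphism from $G$ to a prime-to-$p$ abelian group annihilates every element of $p$-power order (its image would have order both a $p$-power and prime to $p$, hence trivial) and every commutator, so factors through $(G/p(G))^{ab}$. For the right side: $H/p(H)$ is the quotient of the abelianization $H = G^{ab}$ by its Sylow $p$-subgroup, which is likewise the maximal prime-to-$p$ abelian quotient of $G$. Consequently $d := d((G/p(G))^{ab}) = d(H/p(H))$, and the dichotomy $p \mid |G^{ab}|$ versus $p \nmid |G^{ab}|$ is identical to $p \mid |H|$ versus $p \nmid |H|$.

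Plugging these identifications into Theorem~\ref{Abel} applied to the abelian group $H$ gives $\ram_{\F_p(t)}(H) = d+1$ when $p \mid |H|$ and $\max(d,1)$ when $p \nmid |H|$, which is precisely the right-hand side of the corollary. The one edge case is $H$ trivial, i.e.\ $G$ perfect: here Theorem~\ref{Abel} does not strictly apply, but then $d = 0$ and $p \nmid |G^{ab}| = 1$, so the desired bound reduces to $\ram \geq 1$, which is the trivial geometric lower bound cited in the introduction (a geometric extension of $\F_p(t)$ cannot be everywhere unramified).

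The main ``obstacle'' is really just the algebraic bookkeeping step identifying $(G/p(G))^{ab}$ with $G^{ab}/p(G^{ab})$; once this identity is in place, no further arithmetic content is needed beyond Theorem~\ref{Abel} itself.
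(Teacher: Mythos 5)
Your proposal is correct and follows exactly the route the paper intends: the corollary is stated as an immediate consequence of Theorems~\ref{Abel-p} and \ref{Abel}, obtained by passing to the fixed field of $[G,G]$ and applying the abelian lower bound to $G^{ab}$, together with the identification $(G/p(G))^{ab}\cong G^{ab}/p(G^{ab})$ and the trivial geometric bound $\ram\geq 1$ for perfect $G$. Your write-up just makes explicit the bookkeeping the paper leaves implicit.
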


\section{Semiabelian Groups}

Theorem \ref{p} established that Conjecture \ref{Restricted IGP} holds for $p$-groups; we now  consider $l$-groups, where $l$ is a prime different from $p$.  Specifically, we consider the case of semiabelian $l$-groups, which leads to a more general result.  The following are from \cite{KS}:

\begin{definition}
Let $G, H$ be finite groups.  We define the \textit{wreath product} $H\wr G$ of $H$ with $G$ to be the semidirect product $H^{|G|}\rtimes G$, where $H^{|G|}$ is the direct product of $|G|$ copies of $H$, with $G$ acting on $H^{|G|}$ by permuting the copies of $H$ as in the regular (Cayley) representation of $G$.

We define the \textit{wreath length} of a group $G$ to be the smallest positive integer $r$ such that there are finite cyclic groups $C_1,\ldots, C_r$ and an epimorphism $$C_1\wr (C_2 \wr ( \cdots \wr C_r)\cdots )\twoheadrightarrow G,$$ if such a number exists, and denote it $wl(G)$.
\end{definition}

\begin{definition}
A finite group $G$ is called \textit{semiabelian} if there exists a sequence $$G_0=\{1\}, G_1,\ldots,G_n=G$$ such that $G_i$ is a homomorphic image of a semidirect product $A_i \rtimes G_{i-1}$ with $A_i$ abelian, $i=1,\ldots,n$.
\end{definition}

\begin{proposition}\cite{KS} \label{wr}
For any prime $l$, the smallest family containing all cyclic $l$-groups that is closed under homomorphic images, direct products, and wreath products is the family of semiabelian $l$-groups.  Furthermore, for the elements of this family the wreath length is defined and is exactly $d(G)$.
\end{proposition}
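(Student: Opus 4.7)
The proposition decomposes into two claims: the family of semiabelian $l$-groups coincides with the closure $\mathcal{F}$ of the cyclic $l$-groups under homomorphic images, direct products, and wreath products, and on this family the wreath length equals $d(G)$. For the inclusion $\mathcal{F} \subseteq \{\text{semiabelian }l\text{-groups}\}$, I would verify each closure property directly. Cyclic $l$-groups are semiabelian via the one-step chain $G_0 = 1$, $G_1 = G$ with $A_1 = G$. Semiabelian $l$-groups are closed under homomorphic images (push the chain forward through the surjection), under direct products (concatenate the two chains), and, as the main check, under wreath products: given chains for the top and bottom groups of $H \wr G$, build a chain for $H \wr G$ by first running the chain for $G$ up to the top component, then building $H^{|G|}$ layer by layer using the $G$-invariant coordinate-wise powers of the chain of $H$.

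For the reverse inclusion, I would induct on the length of the semiabelian chain, relying on the observation that any chain can be refined so that each $A_i$ is cyclic as a $G_{i-1}$-module. This refinement is obtained by choosing a composition series of $A_i$ as a $G_{i-1}$-module and inserting the corresponding intermediate groups into the chain; each successive piece then becomes a cyclic $G_{i-1}$-module. Once in this refined form, each semidirect product $A_i \rtimes G_{i-1}$ is realized as a quotient of $C_i \wr G_{i-1}$ for a sufficiently large cyclic $l$-group $C_i$: by Frobenius reciprocity, a cyclic $G_{i-1}$-module of exponent at most $|C_i|$ is an equivariant quotient of the induced module $C_i^{|G_{i-1}|}$, which is exactly the base of the wreath product $C_i \wr G_{i-1}$. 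Iterating this observation up the chain places every semiabelian $l$-group in $\mathcal{F}$.

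For the wreath length equality, the inequality $d(G) \leq wl(G)$ is a short induction on the tower length: a wreath product $C \wr T$ can always be generated by $d(T) + 1$ elements (one generator of $C$ placed in the distinguished coordinate, together with generators of $T$), so any tower $C_1 \wr (C_2 \wr \cdots \wr C_r)$ is $r$-generated, and quotients cannot increase generator rank. The reverse inequality $wl(G) \leq d(G)$ is the main obstacle. I would argue by induction on $d(G)$, with the cyclic case trivial. In the inductive step, the semiabelian structure together with the refinement above should allow one to extract a normal abelian subgroup $N$ that is cyclic as a $G/N$-module, with $d(G/N) = d(G) - 1$, so that $G$ is a quotient of $C \wr (G/N)$ for a suitable cyclic $l$-group $C$. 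Combined with the tower of length $d(G) - 1$ for $G/N$ from the induction hypothesis and the natural surjection $C \wr W \twoheadrightarrow C \wr (G/N)$ induced by any $W \twoheadrightarrow G/N$, this yields a tower of length $d(G)$ onto $G$. The delicate point is arranging the chain and the choice of generators so that the final refinement step drops the generator count by exactly one — this requires choosing generators compatibly with a Frattini-type projection and verifying that a semiabelian quotient of the correct rank survives, which is where most of the work of the argument lies.
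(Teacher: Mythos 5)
The paper does not prove this proposition at all: it is quoted verbatim from Kisilevsky--Sonn \cite{KS}, so there is no in-paper argument to compare your sketch against. Judged on its own merits, your outline of the easy parts is sound: the closure of the semiabelian $l$-groups under quotients, products, and wreath products works exactly as you describe, the bound $d(C\wr T)\leq d(T)+1$ (one generator in a single coordinate of the base, whose $T$-conjugates fill out $C^{|T|}$) gives $d(G)\leq wl(G)$, and the observation that a cyclic $G$-module of exponent dividing $|C|$ is an equivariant quotient of $C^{|G|}=\mathrm{Ind}_1^G C$ is the correct mechanism for realizing one abelian layer as a quotient of one wreath factor.

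There are, however, two genuine gaps. First, your refinement step runs in the wrong direction: refining a semiabelian chain by a \emph{composition series} of $A_i$ would require each intermediate extension $(A_i/B_{j-1})\rtimes G_{i-1}$ to be a quotient of $(B_j/B_{j-1})\rtimes\bigl((A_i/B_j)\rtimes G_{i-1}\bigr)$, which demands a complement to $B_j/B_{j-1}$ that need not exist (already $A_i=\Z/l^2\Z$ with $B=l A_i$ fails). The correct refinement filters $A_i$ from below by the submodules generated by $a_1,\dots,a_j$ and at each step adjoins the cyclic module $\langle a_j\rangle$, with the multiplication map $\langle a_j\rangle\rtimes G_i^{(j-1)}\to G_i^{(j)}$ absorbing the overlap; this is fixable but is not what you wrote. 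Second, and more seriously, the inequality $wl(G)\leq d(G)$ --- which is the entire content of the proposition --- is not proved. Your intermediate claim is also misstated: if $C\wr(G/N)$ surjects onto $G$ with the base group landing in $N$, then the image of the complement $G/N$ is a subgroup of $G$ of order at most $|G/N|$ mapping onto $G/N$, which forces the extension $1\to N\to G\to G/N\to 1$ to split. One must instead use the semiabelian decomposition $G=AH$ with $A$ abelian normal and $H$ a \emph{proper subgroup} (not the quotient $G/A$), and the crux of Kisilevsky--Sonn's argument is producing such a decomposition in which $A$ is a cyclic $H$-module and $d(H)=d(G)-1$; you explicitly defer exactly this step (``where most of the work of the argument lies''), so the proposal does not establish the result.
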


\begin{lemma}\cite{KS2}\label{2} Let $K$ be a global field.  Let $\mathfrak{a}_1,\ldots, \mathfrak{a}_s\in \mathcal{I}_\mathfrak{p}$ such that their images in $\mathcal{C}l_K^{(l)}$ form a minimal set of generators.  Let $l_{m_i}$ be the order of $\overline{\mathfrak{a}_i}$ and $\mathfrak{a}_i^{l^{m_i}}=(a_i)\in\mathcal{P}_\mathfrak{p}$. Let $K^{\prime\prime}=K\left(\zeta_{l^m},\sqrt[l^m]{\xi},\sqrt[l^{m_i}]{a_i}\mid \xi\in U_K,i=1,\ldots, r\right)$, where $U_K$ is the group of units of $K$ and $\mathfrak{p}$ a prime of $K$ which splits completely in $K^{\prime\prime}$.  Then there is a cyclic $l^m$-extension of $K$ that is totally ramified at $\mathfrak{p}$ and is not ramified at any other prime of $K$.
\end{lemma}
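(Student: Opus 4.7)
The plan is to realize the desired cyclic $l^m$-extension as a quotient of the Galois group of the maximal abelian extension of $K$ unramified outside $\mathfrak{p}$, using the complete splitting of $\mathfrak{p}$ in $K''$ to dispose of the global obstructions coming from units and class-group relations. First I would unpack the splitting hypothesis into local conditions at $\mathfrak{p}$: the splitting in $K(\zeta_{l^m})$ forces $\zeta_{l^m}\in K_\mathfrak{p}$, so $l^m\mid \mathrm{N}\mathfrak{p}-1$ and the tame quotient $\mathcal{O}_{K_\mathfrak{p}}^{\times}/(\mathcal{O}_{K_\mathfrak{p}}^{\times})^{l^m}$ is cyclic of order $l^m$; the splitting in each $K(\sqrt[l^m]{\xi})$ forces every $\xi\in U_K$ to be an $l^m$-th power in $K_\mathfrak{p}^{\times}$; and similarly each $a_i$ becomes an $l^{m_i}$-th power in $K_\mathfrak{p}^{\times}$.

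Next I would invoke the class-field-theoretic exact sequence
\[
1 \longrightarrow \mathcal{O}_{K,\mathfrak{p}}^{\times} \longrightarrow K_\mathfrak{p}^{\times} \longrightarrow G_\mathfrak{p}^{\mathrm{ab}} \longrightarrow \mathcal{C}l_K/\langle[\mathfrak{p}]\rangle \longrightarrow 1,
\]
where $G_\mathfrak{p}^{\mathrm{ab}}$ denotes the Galois group of the maximal abelian extension of $K$ unramified outside $\mathfrak{p}$ and $\mathcal{O}_{K,\mathfrak{p}}^{\times}$ is the group of $\{\mathfrak{p}\}$-units. A cyclic $l^m$-extension of $K$ totally ramified at $\mathfrak{p}$ corresponds to a continuous surjection $\chi\colon G_\mathfrak{p}^{\mathrm{ab}}\twoheadrightarrow\mathbb{Z}/l^m\mathbb{Z}$ whose restriction to the image of $\mathcal{O}_{K_\mathfrak{p}}^{\times}$ is still surjective. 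I would take $\chi$ to be induced by the tame cyclic projection produced in step one, extended arbitrarily across the uniformizer direction of $K_\mathfrak{p}^{\times}$, and then verify that it descends to $G_\mathfrak{p}^{\mathrm{ab}}$ by checking that it vanishes on the image of $\mathcal{O}_{K,\mathfrak{p}}^{\times}$.

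The verification splits cleanly. The image of $U_K$ in the local unit group dies in the $l^m$-th power quotient by the unit-splitting hypothesis, and a lift of a principal power of $\mathfrak{p}$ only affects the uniformizer direction, which has been left free in the construction. The role of the $\mathfrak{a}_i$ and $a_i$ is then to ensure that the resulting cyclic quotient is honestly of order $l^m$ rather than of a proper divisor: each $\mathfrak{a}_i$ contributes the relation $\mathfrak{a}_i^{l^{m_i}}=(a_i)$ in the class group, and the vanishing of $a_i$ modulo $l^{m_i}$-th powers at $\mathfrak{p}$ shows that this relation does not collapse the target. The minimality of the generating set $\{\overline{\mathfrak{a}_i}\}$ of $\mathcal{C}l_K^{(l)}$ rules out any further hidden relations.

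The main technical obstacle is this last diagram chase: showing that killing $U_K$ and the $a_i$ locally is precisely what is needed for the cyclic $l^m$-quotient to be well-defined with order exactly $l^m$. In carrying it out, one has to move carefully between the idelic description of $G_\mathfrak{p}^{\mathrm{ab}}$ and the Kummer-style quotient by $l^m$-th powers on the local unit side, and track how principal powers of $\mathfrak{p}$ interact with the $\{\mathfrak{p}\}$-unit subgroup generated by $U_K$ together with the $a_i$.
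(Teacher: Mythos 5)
First, note that the paper offers no proof of this lemma: it is imported directly from the cited reference \cite{KS2}, so there is no internal argument to compare yours against. Judged on its own terms, your strategy is the standard (and surely the intended) class-field-theoretic one: translate complete splitting of $\mathfrak{p}$ in $K''$ into the local statements $\zeta_{l^m}\in K_\mathfrak{p}$ (so the tame quotient of the local units has a cyclic quotient of order $l^m$), $U_K\subseteq (K_\mathfrak{p}^\times)^{l^m}$, and $a_i\in (K_\mathfrak{p}^\times)^{l^{m_i}}$, and then manufacture a character of the ray class group modulo $\mathfrak{p}$ whose restriction to the inertia subgroup is already surjective onto $\mathbb{Z}/l^m\mathbb{Z}$. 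All the right ingredients are on the table.

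The problem is that you stop exactly at the step that carries the entire content of the lemma. Write $A$ for the $l$-part of the ray class group mod $\mathfrak{p}$ and $C$ for the image of $(\mathcal{O}_K/\mathfrak{p})^\times$ in it (cyclic of order divisible by $l^m$ once $U_K$ has been killed locally), so that $A/C\cong \mathcal{C}l_K^{(l)}$. You need a homomorphism $A\to\mathbb{Z}/l^m\mathbb{Z}$ that is surjective on $C$. A surjection $C\twoheadrightarrow\mathbb{Z}/l^m\mathbb{Z}$ always extends to a $\mathbb{Q}/\mathbb{Z}$-valued character of $A$, but the extension may have order $l^{m+j}$ with $j>0$; the corresponding field is then cyclic of degree $l^{m+j}$ with inertia at $\mathfrak{p}$ of index $l^j$, and its degree-$l^m$ subfield is not totally ramified at $\mathfrak{p}$. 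Your remark that the $a_i$ ensure the quotient is ``of order $l^m$ rather than of a proper divisor'' misidentifies the obstruction: the danger is that the order is forced up, not down. The actual use of the hypotheses is this: since $a_i$ becomes an $l^{m_i}$-th power mod $\mathfrak{p}$, the relation $\mathfrak{a}_i^{l^{m_i}}=(a_i)$ shows the lift of $\overline{\mathfrak{a}_i}$ to $A$ can be corrected by an element of $C$ so as to have order exactly $l^{m_i}$; minimality (a generating set adapted to the cyclic decomposition of $\mathcal{C}l_K^{(l)}$) then makes these corrected lifts generate a complement to $C$, so $A\cong C\times \mathcal{C}l_K^{(l)}$ and projection onto $C$ followed by $C\twoheadrightarrow\mathbb{Z}/l^m\mathbb{Z}$ finishes the proof. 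Without carrying out this splitting argument, which you explicitly defer as ``the main technical obstacle,'' the proposal does not establish the lemma.
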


\begin{corollary}\label{3} Let $K/\F_p(t)$ be a function field, $n$ a positive integer with $(n,p)=1$.  Then there exists a finite extension $K^{\prime\prime\prime}$ of $K$ such that if $\mathfrak{p}$ is any prime of $K$ that splits completely in $K^{\prime\prime\prime}$, then there exists a cyclic extension $L/K$ of degree $n$ in which $\mathfrak{p}$ is totally ramified and $\mathfrak{p}$ is the only prime of $K$ that ramifies in $L$.
\end{corollary}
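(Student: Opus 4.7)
The plan is to reduce to the prime-power case already handled by Lemma \ref{2} and then to form a suitable compositum.

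First, factor $n = l_1^{m_1}\cdots l_s^{m_s}$ into distinct prime powers. Since $(n,p)=1$, every $l_i$ is different from $p$, so for each $i$ I may apply Lemma \ref{2} with $l=l_i$ and $m=m_i$ to obtain a finite extension $K_i''/K$ with the property that any prime of $K$ splitting completely in $K_i''$ is totally ramified in some cyclic extension of $K$ of degree $l_i^{m_i}$ that is unramified at every other prime of $K$. I then define $K''' := K_1''\cdots K_s''$, the compositum inside a fixed algebraic closure of $K$; this is a finite extension of $K$.

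Now given a prime $\mathfrak{p}$ of $K$ that splits completely in $K'''$, it splits completely in each $K_i''$, so Lemma \ref{2} produces cyclic extensions $L_i/K$ of degree $l_i^{m_i}$, each totally ramified at $\mathfrak{p}$ and unramified at every other prime of $K$. Set $L := L_1\cdots L_s$. The verification then has three points: (i) $\Gal(L/K)$ is cyclic of order $n$, since the $[L_i:K]$ are pairwise coprime, forcing the $L_i$ to be linearly disjoint over $K$ and yielding $\Gal(L/K)\cong\prod_i \Z/l_i^{m_i}\Z\cong\Z/n\Z$; (ii) $L/K$ is unramified outside $\mathfrak{p}$, because each $L_i$ is and a prime of $K$ unramified in every factor of a compositum is unramified in the compositum; (iii) $\mathfrak{p}$ is totally ramified in $L/K$, since multiplicativity of the ramification index in towers, combined with total ramification of each $L_i$ at $\mathfrak{p}$ and coprimality of the local degrees $l_i^{m_i}$, forces $e(L/K,\mathfrak{p}) = \prod_i l_i^{m_i} = n$.

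There is no real obstacle here; the corollary is essentially a packaging of Lemma \ref{2}. The only step requiring any care is the final ramification computation at $\mathfrak{p}$, where coprimality of the local degrees $l_i^{m_i}$ must be invoked to propagate total ramification of each $L_i$ to total ramification of the compositum $L$.
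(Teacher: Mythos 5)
Your proof is correct and follows the same route as the paper: factor $n$ into prime powers, apply Lemma \ref{2} to each prime power, take $K'''$ to be the compositum of the resulting fields $K''(q)$, and take $L$ to be the compositum of the cyclic extensions $L(q)$. The paper states this in three lines and omits the verification; your added details (linear disjointness from coprimality of the degrees, and the ramification-index argument at $\mathfrak{p}$) are exactly the right way to fill it in.
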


\begin{proof} We modify the number field case found in \cite{KSN}.

Let $n=\prod_{q}{q^{m(q)}}$ be the decomposition of $n$ into primes.  Let $K^{\prime\prime}=K^{\prime\prime}(q)$ by taking $m=m(q)$ in Lemma \ref{2}, then let $K^{\prime\prime\prime}$ be the compositum of the fields $K^{\prime\prime}(q)$.  Let $L(q)$ be the cyclic extension of degree $q^{m(q)}$ provided by Lemma \ref{2}.  The compositum $L=\prod{L(q)}$ has the desired property.
\end{proof}

\begin{theorem}\label{semi} Let $G$ be a finite semiabelian group, of order prime to $p$.  Then there exists a tamely ramified extension $K/\F_p(t)$ with Galois group $G$ in which at most $d(G)$ primes ramify.
\end{theorem}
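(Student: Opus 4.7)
The plan is to induct on $d(G)$, using the wreath-product characterization of semiabelian groups. By Proposition \ref{wr} (or its extension to prime-to-$p$ semiabelian groups built from cyclic groups of various primes), a semiabelian $G$ with $d(G)=d$ is a homomorphic image of an iterated wreath product $C_d \wr (C_{d-1} \wr \cdots \wr C_1)$ of cyclic groups. Since any quotient of a Galois extension is a subextension with no greater ramification locus, it suffices to realize the full wreath product. The base case $d=1$ is cyclic of order prime to $p$, handled directly by Corollary \ref{3} with a single ramified prime.

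For the inductive step, write the wreath product as $C \wr G'$ with $d(G')=d-1$, and let $M'/\F_p(t)$ be a $G'$-extension ramified at at most $d-1$ primes, provided by induction. Apply Corollary \ref{3} to $M'$ with $n=|C|$ to produce the auxiliary extension $K'''/M'$, and use the Chebotarev density theorem for function fields to select a prime $\mathfrak{p}$ of $\F_p(t)$, outside the ramified locus of $M'/\F_p(t)$, that splits completely in $K'''/\F_p(t)$. Then $\mathfrak{p}$ has $|G'|$ distinct prime divisors $\mathfrak{P}_1,\ldots,\mathfrak{P}_{|G'|}$ in $M'$, each splitting completely in $K'''/M'$, so Corollary \ref{3} yields a cyclic extension $L_1/M'$ of degree $|C|$ totally ramified at $\mathfrak{P}_1$ and nowhere else. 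Lifting each $\sigma \in G'$ to $\widetilde{\sigma}\in\Gal(\overline{\F_p(t)}/\F_p(t))$, set $L_\sigma := \widetilde{\sigma}(L_1)$; this is a cyclic extension of $M'$ of degree $|C|$, ramified only at $\widetilde{\sigma}(\mathfrak{P}_1)$. The compositum $L=\prod_{\sigma\in G'}L_\sigma$ is then the desired $(C \wr G')$-extension of $\F_p(t)$, ramified precisely at the primes ramified in $M'$ together with $\mathfrak{p}$, for $d$ ramified primes total. Tameness is automatic since $|G|$ is coprime to $p$.

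The hard part is verifying that $L/\F_p(t)$ is actually Galois with group exactly $C \wr G'$. This reduces to showing linear disjointness of the $L_\sigma$ over $M'$, which forces $\Gal(L/M') \cong C^{|G'|}$, together with the observation that $G'$ permutes the factors via its regular action on $\{\mathfrak{P}_\tau\}$. Disjointness follows from the distinct single-prime ramification loci of the $L_\sigma$: any nontrivial subextension of a given $L_\sigma$ must be ramified at $\widetilde{\sigma}(\mathfrak{P}_1)$, whereas the compositum of the remaining $L_\tau$ is unramified there. A secondary technical concern is the reliance on Proposition \ref{wr} beyond the $l$-group case; if no such extension is readily available, one instead inducts along the defining semiabelian sequence $G_0,G_1,\ldots,G_n=G$, building at each step an abelian $A_i$-extension of the previous field with the prescribed $G_{i-1}$-action by the same Corollary \ref{3}/Chebotarev/Galois-conjugation strategy, and tracking the added ramification carefully to stay within $d(G)$.
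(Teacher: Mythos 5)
Your proposal is correct and follows essentially the same route as the paper: induction on the wreath length via Proposition \ref{wr}, applying Corollary \ref{3} to the inductively constructed $G'$-extension to obtain a cyclic layer totally ramified at a single new prime, and taking the compositum of its Galois conjugates to realize $C\wr G'$. You merely make explicit two points the paper leaves implicit (the Chebotarev argument producing the completely split prime, and the linear-disjointness argument via the distinct one-prime ramification loci of the conjugates), and your caveat about extending Proposition \ref{wr} beyond $l$-groups applies equally to the paper's own proof.
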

Note, this does not go as far as Conjecture \ref{Restricted IGP}, as we are finding $d(G)$ not $d\left(G^{ab}\right)$ ramified primes (here $p(G)=\{1\}$).  However, when the two are equal, as is the case for all $l$-groups by Burnside's Basis Theorem \cite[p. 46]{Koch}, we will then have shown Conjecture \ref{Restricted IGP}.
\begin{proof} Again, this is a modification of the number field case found in \cite{KSN}.
By definition, $G$ is a homomorphic  image of $C_1 \wr(C_2\wr \cdots \wr C_r)$, $r=wl(G)=d(G)$.  Thus we may assume $G\cong C_1\wr (C_2 \wr \cdots \wr C_r)$.  We then induct on $r$.

For $r=1$, $G$ is cyclic.  Then we are done by Theorem \ref{Abel}.

Now, assume the theorem holds for $r-1$.  let $K_1/\F_p(t)$ be a tamely ramified Galois extension with $\Gal(K_1/\F_p(t))\cong C_2\wr (C_3\wr \cdots \wr C_r)$ such that the ramified primes in $K_1$ are a subset of $\{\mathfrak{q}_2,\ldots, \mathfrak{q}_r\}$.  By Corollary \ref{3}, there exists a field $K_1^{\prime\prime\prime}$, the field supplied for $K_1$ by Corollary \ref{3}, and a prime $\mathfrak{q}=\mathfrak{q}_1$ which splits completely in $K_1^{\prime\prime\prime}$.  Let $\mathfrak{p}=\mathfrak{p}_1$ be a prime of $K_1$ dividing $\mathfrak{q}$.  Then there exists a cyclic extension $L/K_1$ with $\Gal(L/K_1)\cong C_1$ in which $\mathfrak{p}$ is totally ramified and in which $\mathfrak{p}$ is the only prime of $K_1$ which ramifies in $L$.

Let $\{\sigma(\mathfrak{p})\mid \sigma\in\Gal(K_1/\F_p(t))\}$ be the $|G_1|$ distinct conjugates of $\mathfrak{p}$ over $K_1$.  Let $M$ be the compositum of $\sigma(L)$, as $\sigma$ runs over $\Gal(K_1/\F_p(t))$.  Then it follows that the fields $\{\sigma(L) \mid \sigma \in \Gal(K_1/\F_p(t))\}$ are linearly disjoint over $K_1$, and so $$\Gal(M/\F_p(t))\cong C_1\wr G_1\cong G.$$  With the ramified primes of $M/\F_p(t)$ a subset of $\{\mathfrak{q}_1,\ldots,\mathfrak{q}_r\}$.  Then $M$ satisfies the conditions of the theorem.
\end{proof}

\section{Small Group Examples}

Before we proceed further, it is useful to examine several concrete examples.  In the process of proving these cases, it is advantageous to have a definitive method for determining the ramification at the infinite prime.  We use the following criterion when dealing with explicit examples:

\begin{lemma}[Ramification at Infinity] \label{inf}
Suppose $f(x)$ has a splitting field $K/\F_p(t)$.  The following procedure is sufficient to determine if the infinite prime $(1/t)$ ramifies in the extension $K/\F_p(t)$. \begin{enumerate}
\item Substitute $xt^{b}$ for $x$.
\item Divide by $t^{b\cdot \deg{f}}$.
\item Mod out by $1/t$ to get $g(x)$.
\item Determine if $g(x)$ has a repeated root.
\end{enumerate}  Here, $b$ is the smallest integer such that after step 2 there are no positive powers of $t$ left.
\end{lemma}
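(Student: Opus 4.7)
The plan is to interpret the procedure as the composition of an $\F_p(t)$-linear change of variables with the standard reduction-modulo-uniformizer criterion at the infinite place $v_\infty$, whose maximal ideal is generated by $\pi := 1/t$. The local ring at this place is $\mathcal{O} := \F_p[1/t]_{(1/t)}$, with uniformizer $\pi$ and residue field $\F_p$. The strategy is to manipulate $f$ into a monic polynomial over $\mathcal{O}$ and then apply Dedekind's discriminant criterion for unramified primes.

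First I would observe that the substitution $x \mapsto xt^b$ is an $\F_p(t)$-linear bijection of $\F_p(t)[x]$ whose effect on roots is $\alpha \mapsto \alpha/t^b$, so $f(xt^b)$ generates the same splitting field $K$ as $f(x)$ over $\F_p(t)$. Dividing by the scalar $t^{b\cdot \deg f}$ again preserves $K$, and assuming $f$ is monic (which may be arranged without changing $K$) it restores monicity of the resulting polynomial $\tilde f(x)$. The clause ``no positive powers of $t$ remain after step~2'' is precisely the statement that every coefficient of $\tilde f(x)$ lies in $\F_p[1/t] \subset \mathcal{O}$, and choosing $b$ to be the \emph{smallest} such integer ensures that at least one coefficient has nonzero image in the residue field $\F_p$. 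Consequently $g(x) := \tilde f(x) \bmod \pi$ is a monic polynomial in $\F_p[x]$ of degree $\deg f$, neither zero nor of lower degree.

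Then I would invoke the standard ramification criterion: for a monic $\tilde f \in \mathcal{O}[x]$ with splitting field $K$, the prime $\pi$ is unramified in $K/\F_p(t)$ if and only if $\mathrm{disc}(\tilde f)$ is a unit in $\mathcal{O}$, equivalently $\mathrm{disc}(g) \neq 0$ in $\F_p$, equivalently $g(x)$ has no repeated root. This is exactly the test given in the lemma. The main point requiring care is the bookkeeping in the normalization step---verifying that the minimal choice of $b$ produces a $\tilde f$ genuinely in $\mathcal{O}[x]$ whose reduction has full degree $\deg f$---together with the implicit hypothesis that $f$ is separable, so that a repeated root of $g$ cannot arise spuriously from two distinct roots of $f$ whose normalized images happen to collide modulo $\pi$; in the explicit examples to which the lemma will be applied, one works with the minimal polynomial of a primitive element of $K$, so this subtlety does not obstruct the procedure.
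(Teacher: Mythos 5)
Your proposal is correct and follows essentially the same route as the paper: steps (1)--(2) convert $f$ into a monic polynomial with coefficients in the local ring at $1/t$, and ramification is then read off by reducing modulo the uniformizer, which is exactly what the paper does by citing Stichtenoth's Lemma 3.5.3 and Corollary 3.5.11 (Kummer's theorem). The one caution is that you state the discriminant criterion as an equivalence, whereas in general a repeated root of $g$ only fails to certify non-ramification (the polynomial discriminant can differ from the local discriminant by a square of the index), but the direction you actually establish --- no repeated root of $g$ implies $(1/t)$ is unramified --- is the direction the paper relies on in its explicit examples.
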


\begin{proof}
We employ the first two steps to obtain a polynomial in $1/t$ instead of $t$.  As in \cite[Lemma 3.5.3, Cor 3.5.11]{Stich}, we can then check the ramification by reducing mod $1/t$.
\end{proof}

One of the most useful tools we have in the function field case is an analogue of Schinzel's Hypothesis-H that allows us to produce irreducible polynomials satisfying certain properties:

\begin{theorem}[Pollack, \cite{Pollack}]\label{Pollack}
Let $n$ be a positive integer.  Let $f_1(x), \ldots , f_r(x)$ be nonassociate irreducible polynomials over $\F_q$ with the degree of the product $f_1\cdots f_r$ bounded by $B$.  The number of univariate monic polynomials $g$ of degree $n$ for which all of $f_1(g(t)),\ldots, f_r(g(t))$ are irreducible over $\F_q$ is $$\frac{q^n}{n^r}+ O_{n,B}\left(q^{n-\frac{1}{2}}\right)$$ provided $gcd(q,2n)=1$.
\end{theorem}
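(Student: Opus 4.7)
The theorem is a function field analogue of the quantitative Bateman--Horn (Schinzel Hypothesis H) statement, and my plan is to prove it by combining an irreducibility reduction, the prime polynomial theorem for $\mathbb{F}_q[t]$, and the Weil bound for associated character sums.

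The first step is a reduction to a simpler form. Let $\alpha_i$ be a root of $f_i$ in $\overline{\mathbb{F}_q}$, so $\alpha_i \in \mathbb{F}_{q^{d_i}}$ with $d_i = \deg f_i$. If $\beta$ is any root of $g(t) - \alpha_i$, then $\mathbb{F}_q(\beta) \supseteq \mathbb{F}_q(\alpha_i) = \mathbb{F}_{q^{d_i}}$, so the tower law gives $[\mathbb{F}_q(\beta) : \mathbb{F}_q] = d_i \cdot [\mathbb{F}_{q^{d_i}}(\beta) : \mathbb{F}_{q^{d_i}}]$. Since $\deg f_i(g(t)) = n d_i$, this yields the equivalence: $f_i(g(t))$ is irreducible in $\mathbb{F}_q[t]$ if and only if $g(t) - \alpha_i$ is irreducible in $\mathbb{F}_{q^{d_i}}[t]$. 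The problem is therefore to count monic $g \in \mathbb{F}_q[t]$ of degree $n$ for which each translate $g(t) - \alpha_i$ is irreducible over its respective residue field extension.

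For a single $i$, the density of monic irreducibles of degree $n$ in $\mathbb{F}_{q^{d_i}}[t]$ is $\tfrac{1}{n} + O(q^{-n d_i / 2})$ by the prime polynomial theorem. The polynomials of the form $g(t) - \alpha_i$ with $g \in \mathbb{F}_q[t]$ form an $\mathbb{F}_q$-linear slice inside the space of monic degree-$n$ polynomials in $\mathbb{F}_{q^{d_i}}[t]$. One detects membership in this slice with additive characters and detects irreducibility by M\"obius inversion on $\mathbb{F}_{q^{d_i}}[t]$; combining these reduces the single-condition count to a sum of character sums over curves. Each such character sum is $O(q^{(n+1)/2})$ by the Weil conjectures for curves (the Riemann hypothesis in positive characteristic), giving a single-condition estimate of $q^n/n + O(q^{(n+1)/2})$.

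For the joint count one expands the simultaneous-irreducibility indicator as a product, producing to leading order $r$ independent events each of probability $1/n$ and giving the main term $q^n/n^r$. The central technical obstacle, and where the real work lies, is controlling the cross terms. A Galois argument on the splitting fields of the $f_i$ shows that the irreducibility events for different $i$ are almost independent because the $f_i$ are pairwise nonassociate: the joint Galois group of $\{f_i(g(t))\}_i$ (with the coefficients of $g$ as parameters) is close to the direct product of the individual Galois groups. Each cross character sum is again controlled by Weil, there are $O_{n,B}(1)$ of them, and the aggregate error becomes $O_{n,B}(q^{n - 1/2})$. The hypothesis $\gcd(q, 2n) = 1$ enters to guarantee that the implicit covers underlying the character sums (essentially Kummer-type covers of the form $y^n = g(t) - \alpha_i$) are tame, so that the Weil bound applies in its sharpest form.
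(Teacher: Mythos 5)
This theorem is imported from Pollack's paper and is stated here without proof, so there is no argument of the author's to compare yours against; I can only judge the sketch on its own terms. Your first step is sound and is indeed the standard opening move: the tower-law argument correctly shows (Capelli's lemma) that $f_i(g(t))$ is irreducible over $\F_q$ if and only if the translate $g(t)-\alpha_i$ is irreducible over $\F_{q^{d_i}}$, reducing the problem to counting monic $g$ of degree $n$ with $\F_q$-coefficients whose translates by the $\alpha_i$ are simultaneously irreducible over the respective extension fields.

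Beyond that point, however, the proposal is an outline of what must be proved rather than a proof. The step you yourself flag as ``where the real work lies'' --- the approximate independence of the $r$ irreducibility events --- is asserted, not established. What is actually needed is a monodromy computation: for all but a negligible proportion of choices of the nonconstant part $g_0$ of $g$, the splitting field of $\prod_i\bigl(g_0(u)-(\alpha_i-c)\bigr)$ over the rational function field in the remaining parameter $c$ must be shown to have Galois group the full direct product $\prod_i S_n$, after which an explicit Chebotarev theorem for curves (Weil's Riemann hypothesis) counts the $c\in\F_q$ whose Frobenius is an $n$-cycle in every factor, with a saving of $q^{1/2}$. Your sketch never identifies these covers, and the covers you do name, Kummer covers $y^n=g(t)-\alpha_i$, are not the relevant objects: the hypothesis $\gcd(q,2n)=1$ is not about tameness of such a cover but is precisely what makes the Morse-polynomial criterion for full symmetric monodromy work ($p\nmid n$ so that $g_0'$ has degree $n-1$ and $g_0:\mathbb{P}^1\to\mathbb{P}^1$ is tame at infinity, and $p\neq 2$ so that distinct critical values force the geometric Galois group up to $S_n$). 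Separately, your route to even the single-condition count is not viable as described: the slice $\{g-\alpha_i\}$ has density $q^{-n(d_i-1)}$ inside the monic degree-$n$ polynomials over $\F_{q^{d_i}}$, so detecting it by additive characters introduces $q^{n(d_i-1)}$ terms, and a bound of $O\bigl(q^{(n+1)/2}\bigr)$ per character sum does not come close to beating the trivial bound when $d_i>1$; one must instead use the M\"obius/root-counting identity that converts $g(\beta)=\alpha_i$ into $\F_q$-linear conditions on the coefficients of $g$. In short, the proposal correctly names the toolbox (Capelli, Galois independence, Weil) but supplies neither the monodromy computation nor a workable analytic framework, and these constitute the entire content of the theorem.
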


Pollack gives explicit upper and lower bounds here for most $q$.  In particular, if we let $C$ be the number of such $g$, we have $$C\geq \left(q^{n-1}-4n^2q^{n-2}\left(1 +{B\choose 2}\right)\right)\left(\frac{q}{n^r}-\frac{2}{n^r}\left(q^{1/2}+1+n!^B\right)-(n-1)B\right) $$ when $q$ is sufficiently large.  Specifically, $q$ must be large enough to satisfy $$q>4n^2\left(1+{B\choose 2}\right).$$  Taken together, these then ensure that $C>0$.  Thus, when $q$ is sufficiently large we can always find at least one such $g$.

With this tool in hand, we now proceed to outline several specific examples that provide support for Conjecture \ref{Restricted IGP}.

\subsection{$D_8$}

We begin with a group covered in the previous theorems (specifically Theorem \ref{p} and Theorem \ref{semi}), namely, the dihedral group of order 8.  It is useful, however, to consider this group explicitly.

\begin{theorem} Conjecture \ref{Restricted IGP} holds for $G=D_8$, the dihedral group of order 8.  Namely, there exists a $D_8$-extension of $\mathbb{F}_p(t)$ ramified at exactly 2 primes (counting the infinite prime) when $p\neq 2$, or one prime when $p=2$.  Moreover, there is no such extension ramified at fewer primes.
\end{theorem}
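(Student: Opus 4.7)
The plan is to split on the residue characteristic and in both cases to invoke the structural theorems already in place. The lower bounds are supplied by Corollary \ref{minimality}: for $p=2$, one has $p(D_8)=D_8$ so $d=d(\{1\})=0$, and since $p\mid |D_8^{ab}|=4$ the formula yields $d+1=1$; for $p\neq 2$, $p(D_8)=\{1\}$ and $D_8^{ab}\cong(\Z/2)^2$, so $d=2$ and, since $p\nmid|D_8^{ab}|$, the formula yields $\max(d,1)=2$. It therefore suffices to exhibit $D_8$-extensions of $\F_p(t)$ saturating these bounds.

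When $p=2$, $D_8$ is itself a $p$-group, so Theorem \ref{p} directly supplies a $D_8$-extension of $\F_2(t)$ ramified only at the infinite prime. When $p\neq 2$, the natural route is Theorem \ref{semi}: a direct count of involutions and order-four elements shows $D_8\cong\Z/2\wr\Z/2$, so by Proposition \ref{wr} the group is semiabelian of wreath length $2$, and Burnside's basis theorem gives $d(D_8)=d(D_8^{ab})=2$, matching that length. Since $|D_8|=8$ is coprime to $p$, Theorem \ref{semi} then produces a tamely ramified $D_8$-extension of $\F_p(t)$ ramified at at most, and hence exactly, two primes.

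To deliver an explicit model in the $p\neq 2$ case, the plan is to unfold the inductive construction of Theorem \ref{semi}. First I would pick an irreducible $\pi_2\in\F_p[t]$ of even degree, so that $K_1=\F_p(t,\sqrt{\pi_2})/\F_p(t)$ is a quadratic extension ramified only at the finite prime $\pi_2$ (the parity condition on $\deg\pi_2$ keeps the infinite prime unramified). Next, Corollary \ref{3} applied inside $K_1$ produces a cyclic quadratic $L/K_1$ ramified only at a prime $\mathfrak{p}$ of $K_1$ lying over a rational prime $\pi_1$ that splits completely in the auxiliary field of Corollary \ref{3}. Finally, taking the compositum of $L$ with its $\Gal(K_1/\F_p(t))$-conjugate produces a field $M$ with $\Gal(M/\F_p(t))\cong\Z/2\wr\Z/2\cong D_8$ and ramification locus contained in $\{\pi_1,\pi_2\}$. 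The main obstacle is the ramification audit at the infinite prime of $M$ for any explicit generator; Lemma \ref{inf} is the natural tool, and coordinating the degrees of $\pi_1,\pi_2$ together with the Pollack-type control from Theorem \ref{Pollack} is what pins down that only $\pi_1$ and $\pi_2$ ramify.
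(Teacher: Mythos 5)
Your proposal is correct, but it proves the theorem by a genuinely different (and softer) route than the paper. You dispose of $p\neq 2$ by observing $D_8\cong \Z/2\Z\wr\Z/2\Z$, so that Proposition \ref{wr} and Burnside's basis theorem give $wl(D_8)=d(D_8)=d(D_8^{ab})=2$, and then Theorem \ref{semi} produces a tamely ramified $D_8$-extension with at most two ramified places; combined with the lower bound of $2$ from Corollary \ref{minimality} (equivalently, the Klein-four subextension argument the paper uses), this pins the count at exactly two. The paper instead gives a hands-on construction: it takes $f(x)=x^4+ax^2+b$, reduces the ramification question to the three quadratic resolvent fields $x^2-b$, $x^2-(a^2-4b)$, $x^2-b(a^2-4b)$, uses Pollack's theorem (Theorem \ref{Pollack}) to choose an irreducible $b$ with $a^2-4b$ and $b(a^2-4b)-d$ irreducible, and checks non-ramification at infinity via Lemma \ref{inf} through the leading-coefficient condition $A^2-4B$ not a square; this only works for $p>256$, so the paper supplements it with a table of explicit $(a,b)$ for each smaller prime. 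What your approach buys is brevity and uniformity in $p$ (no large-prime threshold, no table); what the paper's approach buys is explicit defining polynomials, which is the stated purpose of that section --- indeed the paper concedes up front that $D_8$ is already ``covered in the previous theorems (specifically Theorem \ref{p} and Theorem \ref{semi}),'' which is precisely your argument. Your third paragraph, attempting to unwind Theorem \ref{semi} into an explicit model, is the only incomplete part (you yourself flag the unverified ramification audit at infinity), but it is not needed: the first two paragraphs already constitute a complete proof.
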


\begin{proof}
When $p=2$, this falls under Theorem \ref{p}.  We now consider the case $p\neq 2$.

Every dihedral extension can be defined by a polynomial of the form $f(x)=x^4+ax^2+b$.  Further, any such choice of $a,b$ will yield a dihedral extension when $b$, $a^2-4b$, and $b(a^2-4b)$ are all not squares.  Then, the discriminant of $f$ is $$16b(a^2-4b)^2,$$ so it is sufficient to check the ramification in each each of the fields defined by $x^2-b, x^2-(a^2-4b), \text{ and } x^2-b(a^2-4b)$.

Now, based on the abelian theory above, it is sufficient to find $a,b$, with $b$ irreducible, satisfying the non-square conditions described, and then show that the infinite prime does not ramify.

We use Lemma \ref{inf} to test the ramification at the infinite prime.  In most cases, $(1/t)$ will still ramify, but if $2\deg a=\deg b$, and $A^2-4B$ is not a square, $f$ is unramified at infinity.  Here $A$ and $B$ are the leading coefficients of $a$ and $b$, respectively.

Choose $a$ such that $A^2-4$ is not a square.  Choose $d\in \F_p[t]$, a square element.  Let $$g_1(x)=a^2-4x,\hspace{6pt} g_2(x)=x(a^2-4x)-d.$$ Then use Theorem \ref{Pollack} to choose a monic irreducible $b\in \F_p[t]$ such that $g_1(b)$ and $g_2(b)$ are still irreducible.  Thus, by choice of $b$, $f(x)$ must define a dihedral extension ramified at only two finite primes, and by choice of $a$, $f(x)$ is unramified at infinity.  Note, Theorem \ref{Pollack} will give the existence of such a $b$ as long as $$p>4n^2\left(1+{B\choose 2}\right).$$  As we may let $n=4$, Theorem \ref{Pollack} then produces the desired $b$ for $p>256$.  We provide explicit examples for the remaining primes in Table 1.  (Note that for $p=3$, we do not give an irreducible $b$, but instead $b=2(t+2)^4$.  It is not a perfect square, so the extension is still dihedral, and it is the power of a single prime, so only one prime ramifies in the extension generated by $x^2-b$.)

For minimality, note that any dihedral extension has a Klein-4 subextension, and hence by the abelian theory above must have at least two primes that ramify.
\end{proof}

\begin{center}
\begin{table}[h!b!p!]\label{TableS3}
\caption{\tiny{We present the explicit examples for small primes that have Galois group $D_8$.  Here, we have a defining polynomial $$f(x)=x^4+ax^2+b$$  We give the values of $A$, $A^2-4B$, $a$, and $b$, where $A$ is the leading coefficient of $a$, and $B$ is the leading coefficient of $b$.}}
\tiny{\begin{tabular}{|c|c|c|c|c||c|c|c|c|c|}
\hline
$p$ & $A$ & $A^2-4B$ & $a$ & $b$ & $p$ & $A$ & $A^2-4B$ & $a$ & $b$\\
\hline
\hline
  3 & 2 & 2 & $2t^2+t+1 $ & $2t^4+t^3+t+2 $ & 109 & 11 & 8 & $11t^2+3$ & $t^4+5t+1$  \\
  \hline
  5 & 1 & 2 & $t^2+2$ & $t^4+2$  & 113 & 32 & 3 & $32t^2+3$ & $t^4+t+1$ \\
  \hline
  7 & 3 & 5 & $3t^2+3t$ & $t^4+x+1$ & 127 & 3 & 5 & $3t^2+3$ & $t^4+4t+1$  \\
  \hline
  11 & 1 & 8 & $t^2+3t+1$ & $t^4+4t+1$ & 131 & 55 & 8 & $55t^2+6$ & $t^4+t+1$ \\
  \hline
  13 & 3 & 5 & $3t^2+7$ & $t^4+t+1$ & 137 & 12 & 3 & $12t^2$ & $t^4+t+1$  \\
  \hline
  17 & 3 & 5 & $3t^2$ & $t^4+3t+1$ & 139 & 4 & 12 & $4t^2+1$ & $t^4+3t+1$  \\
  \hline
  19 & 5 & 2 & $5t^2+1$ & $t^4+6t+1$ & 149 & 4 & 12 & $4t^2+1$ & $t^4+16t+1$  \\
  \hline
  23 & 3 & 5 & $3t^2+3$ & $t^4+4t+1$ & 151 & 37 & 6 & $37t^2+3$ & $t^4+5t+1$  \\
  \hline
  29 & 8 & 2 & $8t^2+7$ & $t^4+3t+1$ & 157 & 3 & 5 & $3t^2+1$ & $t^4+t+1$  \\
  \hline
  31 & 4 & 12 & $4t^2$ & $t^4+t+1$ &  163 & 13 & 2 & $13t^2+3$ & $t^4+t+1$ \\
  \hline
  37 & 3 & 5 & $3t^2+3$ & $t^4+5t+1$ & 167 & 3 & 5 & $3t^2$ & $t^4+3t+1$  \\
  \hline
  41 & 4 & 12 & $4t^2+1$ & $t^4+t+1$ & 173 & 51 & 2 & $51t^2$ & $t^4+7t+1$   \\
  \hline
  43 & 7 & 2 & $7t^2+5$ & $t^4+5t+1$ & 179 & 38 & 8 & $38t^2+1$ & $t^4+t+1$  \\
  \hline
  47 & 3 & 5 & $3t^2+3$ & $t^4+5t+1$ & 181 & 83 & 7 & $83t^2+3$ & $t^4+8t+1$ \\
  \hline
  53 & 1 & 50 & $t^2$ & $t^4+3t+1$ & 191 & 46 & 11 & $46t^2+6$ & $t^4+4t+1$   \\
  \hline
  59 & 1 & 56 & $t^2$ & $t^4+t+1$ & 193 & 3 & 5 & $3t^2+8$ & $t^4+8t+1$   \\
  \hline
  61 & 5 & 21 & $5t^2$ & $t^4+3t+1$ & 197 & 91 & 3 & $91t^2+12$ & $t^4+11t+1$  \\
  \hline
  67 & 26 & 2 & $26t^2$ & $t^4+t+1$ & 199 & 87 & 3 & $87t^2+1$ & $t^4+t+1$ \\
  \hline
  71 & 19 & 2 & $19t^2+4$ & $t^4+8t+1$ &  211 & 46 & 2 & $46t^2+1$ & $t^4+t+1$  \\
  \hline
  73 & 3 & 5 & $3t^2$ & $t^4+4t+1$ &  223 & 26 & 3 & $26t^2+4$ & $t^4+13t+1$ \\
  \hline
  79 & 13 & 7 & $13t^2+3$ & $t^4+6t+1$ &  227 & 3 & 5 & $3t^2+6$ & $t^4+4t+1$ \\
  \hline
  83 & 3 & 5 & $3t^2+6$ & $t^4+10t+1$ &  229 & 34 & 7 & $34t^2+4$ & $t^4+21t+1$  \\
  \hline
  89 & 10 & 7 & $10t^2$ & $t^4+3t+1$ &   233 & 3 & 5 & $3t^2+7$ & $t^4+8t+1$  \\
  \hline
  97 & 3 & 5 & $3t^2+9$ & $t^4+4t+1$ &  239 & 49 & 7 & $49t^2+4$ & $t^4+5t+1$ \\
  \hline
  101 & 39 & 2 & $39t^2+5$ & $t^4+t+1$ &  241 & 16 & 11 & $16t^2+6$ & $t^4+11t+1$ \\
  \hline
  103 & 25 & 3 & $25t^2+1$ & $t^4+5t+1$ & 251 & 99 & 8 & $99t^2+8$ & $t^4+15t+1$ \\
  \hline
  107 & 3 & 5 & $3t^2+1$ & $t^4+14t+1$ &    &  &  &  &   \\
\hline
\end{tabular}}
\end{table}
\end{center}

\subsection{$S_3$}

We now examine the symmetric group on three elements.  This group, being the smallest nonabelian example, provides much direction for further work. Consider the polynomial $$f(x)=x^3-uwx-u^2, \hspace{10pt} w,u\in \F_p[t]$$  where $u$ and $w$ are relatively prime.  Note, the discriminant of $f(x)$ is $d=4u^3w^3-27u^4$.  For a field $K$, let $h(K)$ denote the divisor class number, namely the order of the finite portion of the class group. Then

\begin{theorem}\cite{LZ}\label{LZ} When $p>3$, if $d$ is not a square in $\F_p(t)$, then $3 \mid h(\F_p(t)(\sqrt{d}))$.  Conversely, every quadratic function field whose divisor class number is divisible by 3 is given in this way by some $u$ and $w$.
\end{theorem}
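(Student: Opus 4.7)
The plan follows the classical Honda--Scholz argument for quadratic number fields, adapted to the function field setting: I will produce from $f$ an unramified cyclic cubic extension of $K = \F_p(t)(\sqrt{d})$ and then invoke class field theory. Since $d \neq 0$ and $d$ is a non-square, $f$ is separable and irreducible with Galois group $S_3$, so its splitting field $L$ has degree $6$ over $\F_p(t)$, with $K$ as its unique quadratic subfield and $L/K$ cyclic of degree~$3$.

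The key content is to verify that $L/K$ is unramified at every place of $K$. For a finite place $\pi$ of $\F_p(t)$ not dividing $d = u^3(4w^3 - 27u)$ the reduction $\bar f$ is separable, so $L/\F_p(t)$ itself is unramified at $\pi$. For $\pi \mid u$ (so $\pi \nmid w$ by coprimality), the Newton polygon of $f$ at $\pi$ has vertices $(0,2v_\pi(u))$, $(1,v_\pi(u))$, $(3,0)$, so $f$ factors $\pi$-adically as linear times quadratic; the quadratic contributes ramification index $2$ exactly when $v_\pi(u)$ is odd, and this matches the ramification of $K/\F_p(t)$ since $v_\pi(d) = 3 v_\pi(u)$. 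For $\pi \mid (4w^3 - 27u)$ the reduction $\bar f$ has a simple root and a double root, and Hensel factors $f = f_1 f_2$ with $f_2$ linear; the identity $\text{disc}(f) = \text{disc}(f_1)\cdot\text{Res}(f_1,f_2)^2$ combined with $v_\pi(\text{Res}(f_1,f_2)) = 0$ gives $v_\pi(\text{disc}(f_1)) = v_\pi(d)$, so the quadratic subextension cut out by $f_1$ coincides with $K$ locally at~$\pi$. The infinite place is handled by Lemma~\ref{inf} after the standard rescaling. In every case the ramification of $L/\F_p(t)$ is absorbed entirely by $K/\F_p(t)$, so $L/K$ is unramified; class field theory then gives $L \subseteq H_K$ (the Hilbert class field), whence $3 = [L:K] \mid h(K)$.

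For the converse, let $K/\F_p(t)$ be a geometric quadratic extension with $3 \mid h(K)$. Because $h(\F_p(t)) = 1$ the norm map $\text{Cl}(K) \to \text{Cl}(\F_p(t))$ is trivial, forcing the nontrivial automorphism $\sigma$ of $K$ to act as $-1$ on $\text{Cl}(K)$. Every index-$3$ subgroup is therefore $\sigma$-stable, and by class field theory it corresponds to an unramified cyclic cubic $L/K$ with $\text{Gal}(L/\F_p(t))$ an extension of $\mathbb{Z}/2$ by $\mathbb{Z}/3$; the $(-1)$-action forces $\text{Gal}(L/\F_p(t)) \cong S_3$ rather than $\mathbb{Z}/6$. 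A primitive element of the fixed field of a transposition satisfies an irreducible cubic over $\F_p(t)$, which (since $p > 3$) one puts in depressed form $x^3 + Ax + B$. Rescaling the root by $c \in \F_p(t)^{\times}$ replaces $(A,B)$ by $(c^2 A, c^3 B)$, and since every element of $\F_p(t)^{\times}$ is a square times a cube (because $\gcd(2,3) = 1$) one can arrange $-c^3 B$ to be a square; clearing denominators then produces $u, w \in \F_p[t]$ with $f(x) = x^3 - uwx - u^2$ defining $K$ up to a harmless square factor in $d$.

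The main obstacle will be the ramification bookkeeping in the forward direction: handling the parity of $v_\pi(u)$ at primes dividing $u$, controlling the Hensel factorization at primes dividing $4w^3 - 27u$, and verifying the infinite place via Lemma~\ref{inf} all demand careful but essentially routine case analysis. The converse direction rests on the crisp observation that $h(\F_p(t)) = 1$ forces $\sigma$ to act as $-1$ on $\text{Cl}(K)$, which then automatically promotes any index-$3$ subgroup to an $S_3$-extension of the base field.
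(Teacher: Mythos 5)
The paper offers no proof of this statement --- it is quoted verbatim from \cite{LZ} --- so there is no in-paper argument to compare against; what follows is an assessment of your plan on its own terms. The forward direction has a genuine gap, and it sits exactly where you wave your hands: the infinite place. Your treatment of the finite places (Newton polygon at $\pi\mid u$, Hensel factorization and the resultant identity at $\pi\mid 4w^3-27u$) is sound, but the claim that ``the infinite place is handled by Lemma~\ref{inf} after the standard rescaling'' and that ``in every case the ramification of $L/\F_p(t)$ is absorbed entirely by $K/\F_p(t)$'' is false without further hypotheses on $u$ and $w$. The paper's own Lemma~\ref{lz2}, taken from the same source, says $L/K$ is unramified \emph{if and only if} $\deg u<3\deg w$ or $3\mid\deg u$; outside those cases the Newton polygon of $f$ at $1/t$ acquires a slope with denominator $3$ and the place at infinity becomes totally ramified in $L/K$. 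Concretely, take $p=7$, $u=t$, $w=1$: then $f=x^3-tx-t^2$ is irreducible, $d=t^3(t+4)$ is not a square, yet $K=\F_7\bigl(t,\sqrt{t(t+4)}\bigr)$ has genus $0$ and $h(K)=1$. So the statement as transcribed cannot be proved as it stands; any correct argument for the forward direction must either impose the degree conditions of Lemma~\ref{lz2} or first replace $(u,w)$ by an equivalent pair satisfying them, a normalization step your plan does not contain.

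Two smaller points. First, irreducibility of $f$ does not follow from $d$ being a nonsquare (a cubic with exactly one rational root also has nonsquare discriminant, since the discriminant of its irreducible quadratic factor is a nonsquare), so irreducibility must be carried as a hypothesis, as Lemma~\ref{lz2} does. Second, in the converse direction the class-field-theoretic skeleton is right ($\sigma$ acts as $-1$ on $\mathrm{Cl}(K)$ because $h(\F_p(t))=1$, which forces the unramified cubic to be $S_3$ over $\F_p(t)$, in agreement with Lemma~\ref{S3}), but the final normalization to the shape $x^3-uwx-u^2$ with $u,w$ \emph{coprime} is not merely ``a harmless square factor'': after arranging $-c^3B=u^2$ you must still show that $-c^2A$ is divisible by $u$ with cofactor prime to $u$, and this is the delicate combinatorial step in Honda's original construction that your sketch elides.
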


\begin{lemma}\label{S3}Let $K/\F_p(t)$ be a quadratic extension.  Then $3|h(K)$ if and only if there exists an unramified geometric cyclic extension $L/K$ such that $L$ is Galois over $\F_p(t)$ with Galois group isomorphic to $S_3$.
\end{lemma}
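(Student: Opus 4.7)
The plan is to invoke unramified class field theory for the function field $K$, together with the structural observation that the nontrivial element $\sigma \in \Gal(K/\F_p(t))$ acts as $-1$ on the degree-zero divisor class group $\mathrm{Pic}^0(K)$. The second ingredient, which follows from $\sigma$ being the hyperelliptic involution on the smooth projective curve attached to $K$, is what will force an $S_3$ (rather than $C_6$) structure on $\Gal(L/\F_p(t))$.

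The $(\Leftarrow)$ direction is immediate from class field theory: if $L/K$ is any geometric unramified cyclic cubic extension, it corresponds to a surjection $\mathrm{Pic}^0(K) \twoheadrightarrow C_3$, and so $3$ divides $h(K) = |\mathrm{Pic}^0(K)|$.

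For $(\Rightarrow)$, assume $3 \mid h(K)$. I would pick any surjection $\varphi : \mathrm{Pic}^0(K) \twoheadrightarrow C_3$ and let $L/K$ be the geometric unramified cyclic extension corresponding to $H = \ker \varphi$. Because $\sigma$ acts as $-1$ on $\mathrm{Pic}^0(K)$, the subgroup $H$ satisfies $\sigma(H) = -H = H$, i.e., $H$ is $\sigma$-stable, so $L/\F_p(t)$ is Galois. Its Galois group sits in
$$1 \longrightarrow \Gal(L/K) \longrightarrow \Gal(L/\F_p(t)) \longrightarrow \Gal(K/\F_p(t)) \longrightarrow 1,$$
with ends $C_3$ and $C_2$, and hence equals $C_6$ or $S_3$. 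The conjugation action of any lift of $\sigma$ on $\Gal(L/K) \cong \mathrm{Pic}^0(K)/H$ is inherited from its action on $\mathrm{Pic}^0(K)$, namely inversion, which is nontrivial on $C_3$. This rules out $C_6$ and forces $\Gal(L/\F_p(t)) \cong S_3$.

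The main technical obstacle is verifying that $\sigma$ acts as $-1$ on $\mathrm{Pic}^0(K)$. For this I would argue geometrically: the smooth projective curve attached to $K$ is a double cover of $\mathbb{P}^1_{\F_p}$ with $\sigma$ as covering involution, so for every closed point $P$ the divisor $P + \sigma(P)$ lies in a single fixed linear equivalence class $[D_0]$ (the pullback of any degree-$1$ divisor class on $\mathbb{P}^1$). Writing a degree-zero divisor as $D = \sum n_i P_i$ with $\sum n_i = 0$ and summing the relations $P_i + \sigma(P_i) \sim D_0$ with weights $n_i$ gives $[D + \sigma(D)] = (\sum n_i)[D_0] = 0$ in $\mathrm{Pic}^0(K)$, so that $[\sigma(D)] = -[D]$. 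A secondary subtlety, handled by working throughout with $\mathrm{Pic}^0(K)$ rather than a larger class group, is ensuring that the extension $L$ is geometric over $K$ and introduces no new constants.
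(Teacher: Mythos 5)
Your proof is correct, and while it starts from the same place as the paper (unramified class field theory for $K$), it carries out the two key steps quite differently. The paper's proof asserts without justification that among the unramified cyclic cubic extensions of $K$ ``at least one must be Galois over $\F_p(t)$,'' and then excludes the $\Z/6\Z$ possibility by noting that it would produce an unramified geometric cubic extension of $\F_p(t)$, which cannot exist since the rational function field has trivial divisor class group. Your proof instead isolates the single structural fact that the covering involution $\sigma$ acts as $-1$ on $\mathrm{Pic}^0(K)$; this simultaneously shows that \emph{every} index-$3$ subgroup is $\sigma$-stable (thereby supplying the justification the paper omits --- note that a naive orbit-counting argument does not suffice when the $3$-rank of the class group is even, since then the number of index-$3$ subgroups is even) and that the conjugation action of a lift of $\sigma$ on $\Gal(L/K)\cong C_3$ is inversion, which rules out $C_6$ directly via functoriality of the Artin map. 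The paper's route to excluding $C_6$ is more elementary in that it needs only the triviality of the class group of $\F_p(t)$; yours is more informative in that it proves every such $L$ works, not just one. Two small bookkeeping points: for a closed point $P$ of degree $>1$ one has $P+\sigma(P)\sim \deg(P)\,[D_0]$ rather than $[D_0]$, and ``degree zero'' means $\sum n_i\deg P_i=0$, so the telescoping sum gives $[D+\sigma(D)]=\deg(D)\,[D_0]=0$ as desired; and one should note (as you implicitly do by staying inside $\mathrm{Pic}^0(K)$) that geometricity of $L/K$ is exactly what makes the corresponding surjection restrict surjectively to $\mathrm{Pic}^0(K)$, which is needed for the $(\Leftarrow)$ direction. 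Neither point affects the validity of the argument.
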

\begin{proof}
By class field theory we have an unramified degree 3 extension of $K$ for every quotient of order 3 of the class group.  At least one of these must be Galois over $\F_p(t)$.  Thus it will have Galois group $S_3$ or $\Z/6\Z$.  However, the latter would then yield an unramified cubic extension of $\F_p(t)$ which is impossible.
\end{proof}

\begin{lemma}\cite{LZ}\label{lz2} Consider $f(x)$ defined above.  If $f$ is irreducible, let $K=\F_p(t)(\sqrt{d})$ and $L/\F_p(t)$ be the splitting field of $f$.  $L$ is unramified over $K$ if and only if $v=w^3$ and $\deg u <\deg v$ or $3\mid \deg u$.

\end{lemma}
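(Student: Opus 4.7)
The plan is to test ramification at finite and infinite places of $K$ separately. Since $p > 3$ and $L/K$ is cyclic of degree $3$, the compositum $L' = L \cdot K(\zeta_3)$ is a Kummer extension $K(\zeta_3)(\sqrt[3]{\beta})$ of $K(\zeta_3)$; because $K(\zeta_3)/K$ is a constant-field extension and hence tame and everywhere unramified, ramification of $L/K$ at a place $\mathfrak{p}$ is equivalent to ramification of $L'/K(\zeta_3)$ at any place above $\mathfrak{p}$. Using the standard resolvent of the depressed cubic $f(x) = x^3 - uwx - u^2$, one writes $\beta$ explicitly in terms of $u$, $w$, and the discriminant $d = u^3(4w^3 - 27u)$.

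With $\beta$ in hand, the finite-prime condition becomes the requirement that $v_{\mathfrak{P}}(\beta) \equiv 0 \pmod{3}$ for every finite place $\mathfrak{P}$ of $K(\zeta_3)$. Since $\gcd(u,w) = 1$, the primes dividing $u$, the primes dividing $w$, and the primes dividing $4w^3 - 27u$ are pairwise disjoint, so the cube-class condition on $\beta$ decomposes into independent local conditions on each group. Reading $v = w^3$ as the stipulation that the $w$-contribution to $d$ appears with exponent divisible by $3$ exactly accounts for the condition on primes dividing $w$; primes dividing $u$ are automatically fine through the factor $u^3$ in $d$; and primes dividing $4w^3 - 27u$ are already unramified in $K$, so they contribute nothing.

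For the infinite prime I would invoke Lemma \ref{inf}: substitute $x \mapsto yt^b$ for the smallest $b$ eliminating all positive powers of $t$ after dividing by $t^{3b}$, and reduce modulo $1/t$. A case split on whether $2\deg u \geq 3\deg w$ shows that the reduced polynomial $g(y)$ is separable precisely when either $3 \mid \deg u$ with the $u^2$-term dominating (so $g(y) = y^3 - \alpha^2$, separable because $p > 3$), or $\deg u < 3\deg w = \deg v$ so that the middle term $-uwy$ survives the rescaling and gives $g(y) = y(y^2 - \alpha\gamma)$, which is separable. Combining the finite and infinite analyses yields the stated equivalence. The main technical obstacle is the finite-prime Kummer calculation: producing the correct $\beta$ from $f$ and tracking the ramification condition through the descent from $K(\zeta_3)$ back to $K$. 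The infinite-prime step, by contrast, is a short direct computation once the exponent $b$ is identified in each degree regime.
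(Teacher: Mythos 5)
The paper does not actually prove this lemma --- it is imported verbatim from Li--Zhang \cite{LZ} --- so there is no internal proof to compare against; I can only assess your outline on its own terms. The overall strategy (Kummer theory over $K(\zeta_3)$ for the finite places, a Newton-polygon/reduction analysis at infinity) is the right kind of argument, and your reading of the condition ``$v=w^3$'' as $\deg u<3\deg w$ matches how the lemma is used later in the paper. But as written the argument has two concrete gaps beyond the deferred computations.

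First, the finite-place analysis contains a false step: you assert that primes dividing $4w^3-27u$ ``are already unramified in $K$.'' Since $d=u^3(4w^3-27u)$, a prime dividing $4w^3-27u$ to odd order is exactly one that \emph{does} ramify in $K=\F_p(t)(\sqrt{d})$. What you actually need at such primes (and at primes dividing $4w^3-27u$ to even order $\geq 2$, where $K$ is unramified but $L$ could a priori be totally ramified in the cubic) is that the inertia subgroup of $\Gal(L/\F_p(t))\cong S_3$ has order at most $2$, so that it is killed upon passing to $L/K$. That follows not from anything about $K$ but from showing $f=x^3-uwx-u^2$ cannot acquire a triple root modulo such a prime: a triple root forces $3c\equiv 0$, hence $c\equiv 0$ (as $p>3$), hence $\pi\mid u$, contradicting $\gcd(u,w)=1$ together with $\pi\mid 4w^3-27u$. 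A similar Newton-polygon check is needed at primes dividing $u$; ``the factor $u^3$ in $d$'' does not by itself rule out order-$3$ inertia. Second, at infinity you test only whether the reduction $g(y)$ is separable, i.e.\ whether $\infty$ ramifies in $L/\F_p(t)$. The lemma concerns ramification in $L/K$, so for the ``only if'' direction you must show that when $\deg u\geq 3\deg w$ and $3\nmid\deg u$ the inertia at infinity has order divisible by $3$ (e.g.\ the Newton polygon of $f$ at $1/t$ has a slope with denominator $3$); inseparability of $g$ alone would be consistent with inertia of order $2$, which is harmless for $L/K$. Finally, the central Kummer computation --- that the cubed Lagrange resolvent $\beta$ satisfies $z^2-27u^2z+27u^3w^3=0$ with $\beta\beta'=(3uw)^3$, and extracting the cube-valuation condition from it --- is acknowledged but not carried out, and that is where most of the content of the lemma lives.
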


We will use this result to control the ramification for general $p$.

\begin{theorem}\label{S32}\label{lz3} Conjecture \ref{Restricted IGP} holds for $G=S_3$ and $p\equiv 0,1 \pmod 3$.  Namely, there exists an $S_3$-extension of $\F_p(t)$ ramified at only one prime.  Moreover, there is no such extension ramified at fewer primes.
\end{theorem}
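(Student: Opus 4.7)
The plan is to split into two cases according to whether $p = 3$ or $p \equiv 1 \pmod 3$. In either case, minimality is immediate from Corollary \ref{minimality}: since $S_3^{ab} \cong \mathbb{Z}/2$, any geometric $S_3$-extension of $\F_p(t)$ contains a nontrivial quadratic subextension and hence must ramify somewhere, so at least one prime ramifies.

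For $p \equiv 1 \pmod 3$ (so $p > 3$), I would apply the Lee--Zhang machinery (Theorem \ref{LZ} together with Lemma \ref{S3} and Lemma \ref{lz2}) to the family $f(x) = x^3 - uwx - u^2$. Taking $u = 1$ makes the clause ``$3 \mid \deg u$'' of Lemma \ref{lz2} automatic, so what remains is to find a monic $w \in \F_p[t]$ such that the squarefree part of $4w^3 - 27$ is irreducible of even degree. Then $K = \F_p(t)(\sqrt{4w^3 - 27})$ is a geometric quadratic extension ramified at exactly one finite prime (the even-degree condition prevents ramification at $\infty$), Theorem \ref{LZ} forces $3 \mid h(K)$, and Lemma \ref{lz2} identifies the splitting field $L$ of $f$ with the unramified $S_3$-extension produced by Lemma \ref{S3}. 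Consequently $L/\F_p(t)$ is ramified only at the one prime where $K$ is ramified. The natural choice is $w$ of degree $2$, so that $4w^3 - 27$ has degree $6$; the existence of such a $w$ follows from Pollack's Theorem \ref{Pollack} for $p$ large enough, and the remaining small primes $p \equiv 1 \pmod 3$ would be treated by explicit tabulation in the style of Table 1.

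For $p = 3$ the Lee--Zhang apparatus does not apply (its hypothesis is $p > 3$) and $S_3$ is not a $p$-group, so I would build the extension by hand via Artin--Schreier theory. Let $a \in \F_3[t]$ be any irreducible polynomial of even degree (for instance $a = t^2 + 1$) and set $K = \F_3(t)(\sqrt{a})$, a geometric quadratic extension ramified only at $(a)$, with nontrivial automorphism $\tau$ sending $\sqrt{a}$ to $-\sqrt{a}$. Put $\alpha = 1/\sqrt{a}$ and define $L = K(y)$ by $y^3 - y = \alpha$. Since $\tau(\alpha) = -\alpha$, the map $\tilde\tau : y \mapsto -y$ extends $\tau$ to an automorphism of $L$ of order $2$, and with $\sigma : y \mapsto y + 1$ generating $\Gal(L/K) \cong \mathbb{Z}/3$ the relation $\tilde\tau \sigma \tilde\tau^{-1} = \sigma^{-1}$ identifies $\Gal(L/\F_3(t))$ with $S_3$. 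Because $\alpha$ has a simple pole at the prime of $K$ lying over $(a)$ and is regular at every other place (at the places above $\infty$ one checks, using the uniformizer $u = 1/t$, that $\alpha$ vanishes to first order), the Artin--Schreier extension $L/K$ is ramified only at the prime above $(a)$, so $L/\F_3(t)$ is ramified only at $(a)$.

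The main obstacle is the existence part of the $p \equiv 1 \pmod 3$ case---specifically, confirming that for every such $p$ one can choose $w$ of degree $2$ making $4w^3 - 27$ have the prescribed shape. Pollack's theorem addresses this asymptotically but with a nonsharp constant, so the finitely many small primes $p \equiv 1 \pmod 3$ not covered by its explicit bound must be checked by a direct search. All the other ingredients (the lifting of $\tau$ to $\tilde\tau$, the ramification analysis of the Artin--Schreier extension, and the minimality) reduce to routine local verifications.
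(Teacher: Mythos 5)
Your treatment of the case $p\equiv 1\pmod 3$ follows the paper's route (the Li--Zhang family $x^3-uwx-u^2$, Pollack's Theorem \ref{Pollack} for large $p$, a table for small $p$), but fixing $u=1$ creates a genuine gap. For the quadratic resolvent field to ramify at a single finite prime you need $4w^3-27$ irreducible; however, $4x^3-27$ already has a root $c\in\F_p$ whenever $27/4$ is a cube in $\F_p^\times$ --- for instance $p=31$, where $2^{10}\equiv 1\pmod{31}$ --- and then $4w(t)^3-27$ carries the nonconstant factor $w(t)-c$ for \emph{every} nonconstant $w$, so no admissible $w$ exists and Pollack's theorem cannot help. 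The paper avoids this by keeping $u\in\F_p^\times$ as a free parameter: since the cubes have index $3$ in $\F_p^\times$ exactly when $p\equiv 1\pmod 3$, one can pick $u$ with $27u/4$ a non-cube, making $4x^3-27u$ irreducible over $\F_p$ before applying Pollack (Table 2 indeed takes $u=3$ for $p=31$ and $p=43$). With that one-line repair your argument is fine; and routing the existence of the $S_3$-extension through Theorem \ref{LZ} and Lemma \ref{S3} rather than through the splitting field of $f$ has the mild advantage of sidestepping the irreducibility of $f$, which the paper must argue separately.

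For $p=3$ your construction is genuinely different from the paper's and is more self-contained. The paper exhibits the single cubic $x^3-(t^2+1)x+(t-1)$ and appeals to unspecified computations for the ramification; you instead build the extension structurally: $K=\F_3(t)(\sqrt{t^2+1})$ is geometric and ramified only at $(t^2+1)$ (the valuation of $t^2+1$ at infinity is even, so infinity is unramified), and the Artin--Schreier extension $y^3-y=1/\sqrt{t^2+1}$ is totally ramified at the unique prime above $(t^2+1)$ (pole of order $1$, prime to $3$) and unramified everywhere else, since $1/\sqrt{t^2+1}$ is integral away from that prime and has a zero of order $1$ at the places over infinity. The identity $(-y)^3-(-y)=-(y^3-y)$ lifts the quadratic involution to an automorphism inverting the Artin--Schreier generator, so the Galois group is $S_3$, and geometricity follows because any constant subextension would be abelian over $\F_3(t)$ and hence contained in the geometric field $K$. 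This checks out completely and replaces the paper's computational verification with a transparent local analysis; minimality in both cases is, as you say, immediate from Corollary \ref{minimality}.
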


\begin{proof}
Note, minimality is ensured by Corollary \ref{minimality} or by Hermite's Theorem for function fields \cite[Theorem III.2.16]{Neukirch2}.  For existence, we consider several cases.

$(p=2)$ \hspace{5pt}  Note that $G/p(G)\cong \{1\}$, which has zero generators, according to our convention.  Then we are expecting the minimal number of ramified primes to be 1.

As an example of the existence of such an extension, consider the field defined by $$f(x)=x^2+x+(t+1)^3.$$  Computations confirm that only the infinite prime ramifies.  This field has class group $\Z\times \Z/3\Z$, thus there is a cubic extension of the quadratic field which is an $S_3$-extension over $\F_p(t)$ by Lemma \ref{S3}.

$(p=3)$ \hspace{5pt}  Note that $G/p(G)\cong \Z/2\Z$, which has one generator.  Then we are expecting the minimal number of ramified primes to be 1.

Consider the splitting field of the polynomial $$f(x)=x^3-(t^2+1)x+(t-1),$$  whose discriminant is $D=(t^2+1)^3$.  Computations confirm that $(t^2+1)$ ramifies, and that it is the only prime that ramifies in the quadratic subextension.  The discriminant is not a square, thus we again retrieve an $S_3$-extension ramified at only one finite prime \cite[p. 612]{Dummit}.

$(p\geq 7,\hspace{6pt} p\equiv1 \pmod 3)$ \hspace{5pt}  Note that $\left(G/p(G)\right)^{ab}\cong \Z/2\Z$, which has one generator.  Then we are expecting the minimal number of ramified primes to be 1.  Let $k=\F_p(t)$ and define $$f(x)=x^3-uwx-u^2$$  where $u,w\in \F_p[t]$ are relatively prime with $\deg u < 3\deg w$ or $3| \deg u$.  Thus by Lemma \ref{lz2} and Theorem \ref{LZ} we have its splitting field is an $S_3$-extension with all the ramification in the quadratic subextension. Then $f$ has discriminant $$d=4u^3w^3-27u^4=u^3(4w^3-27u).$$  Suppose $f$ is irreducible over $k$.  Then work of Li and Zhang \cite[Lemma 2.2]{LZ} implies that $K/k$ will have only one finite ramified prime if we can choose $u,w$ such that $u\in\F_p$ and $w$, nonconstant, such that $\pi=4w^3-27u$ is irreducible.  With such a choice of $u$ and $w$, it is easy to see that $f$ is irreducible: if $f$ factored, it would have a root in $\F_p$[t] but this would then imply that $f$ would have a root in $\F_p$, forcing $w\in \F_p$, which is false.

By Lemma \ref{inf} we see that the infinite prime will always ramify if $\deg w$ is odd.  However, if $\deg w $ is even and $w$ is monic, we arrive at $g(x)=x(x^2-u)$.  This will not have a repeated root so long as we can choose $u\neq 0$ in $\F_p$, since $p\neq 2$.  Thus we can choose a polynomial where the infinite prime does not ramify, so we have an extension ramified at only one prime.

Now, it suffices to show that there always is such a choice of $u$ and $w$, with $\pi$ irreducible and $w$ of even degree.  For this we apply Theorem \ref{Pollack}.  As seen previously, it is possible to choose parameters to guarantee a nonzero answer when $p$ is sufficiently large; in this case, when $p>64$ and when $\F_p\neq \F_p^2$; this requires that $p\equiv1 \pmod{3}$  We give explicit examples for the remaining primes in Table 2.

\end{proof}

\begin{center}
\begin{table}[h!b!p!]\label{TableS3}
\caption{\tiny{We present the explicit examples for small primes that have Galois group $S_3$.  Here, we have a defining polynomial $$f(x)=x^3-uwx-u^2$$  The discriminant is then $$d=4u^3w^3-27u^4=u^3(4w^3-27u)$$  We give the value of $u$ and $w$ for each prime, as well as the value of $\pi=4w^3-27u$.}}
\begin{tabular}{|c|c|c|c|}
  \hline
  % after \\: \hline or \cline{col1-col2} \cline{col3-col4} ...
  $p$ & $u$ & $w$ & $\pi$ \\
  \hline
  \hline
  7 & 6 & $t^2+3$ & $4t^6+t^4+3t^2+2$ \\
  \hline
  13 & 2 & $t^2$ & $4t^6+11$ \\
  \hline
  19 & 2 & $t^2+1$ & $4t^6+12t^4+12t^2+7$ \\
  \hline
  31 & 3 & $t^2+1$ & $4t^6+12t^4+12t^2+6$ \\
  \hline
  37 & 2 & $t^2$ & $4t^6+20$ \\
  \hline
  43 & 3 & $t^2+1$ & $4t^6+12t^4+12t^2+9$ \\
  \hline
  61 & 2 & $t^2$ & $4t^6+7$ \\
\hline
\end{tabular}
\end{table}
\end{center}

\section{Dihedral Groups}

We now attempt to generalize the previous results.  Given a finite cyclic group $A$ with $|A|=a$, we can form the semidirect product $A\rtimes \Z/2\Z$, the dihedral group of order $2a$, and denote it by $D_{2a}$.

\begin{lemma}\label{CH}
Suppose $p\neq 2$.  Let $A$ be as above, with $(a,2)=1$ and $(a,p)=1$.  Let $K/\F_p(t)$ be a quadratic extension with only one ramified prime, $N\in \F_p[t]$.  There exists a minimally ramified $A$-extension of $K$, call it $L/K$, with one ramified prime, such that $L/\F_p(t)$ is Galois.
\end{lemma}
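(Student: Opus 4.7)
The plan is to realise $L$ as a ray class field of $K$ cut out by a character on which the nontrivial automorphism $\sigma \in \Gal(K/\F_p(t))$ acts by inversion; this anti-invariance is precisely what forces $L/\F_p(t)$ to be Galois with dihedral group $D_{2a}$. The heuristic is the classical fact that dihedral extensions of a base field arise from cyclic extensions of a quadratic overfield cut out by ``minus'' characters of its idele class group.

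\textbf{Step 1: produce the ramified prime.} I would first find an irreducible $Q \in \F_p[t]$, coprime to $N$, that is inert in $K/\F_p(t)$ and whose degree is tuned so that $a \mid p^{\deg Q}+1$. Inertness makes the unique prime $\mathfrak{Q}$ above $Q$ automatically $\sigma$-stable, and the residue field $(\mathcal{O}_K/\mathfrak{Q})^\times \cong \F_{p^{2\deg Q}}^\times$ then carries the $\sigma$-action as the Frobenius $x \mapsto x^{p^{\deg Q}}$. On the norm-one subgroup this action becomes $x \mapsto x^{-1}$, and the divisibility $a \mid p^{\deg Q}+1$ embeds a cyclic group of order $a$ inside that $(-1)$-eigenspace. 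Existence of such $Q$ should follow from a Pollack-style density argument (Theorem~\ref{Pollack}) whenever $-1 \in \langle p \rangle \subset (\Z/a\Z)^\times$.

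\textbf{Step 2: build $L$ by class field theory.} With $\mathfrak{Q}$ fixed, I would take $L/K$ to be the class field corresponding to the surjection of $(\mathcal{O}_K/\mathfrak{Q})^\times$ onto its unique order-$a$ quotient inside the norm-one subgroup. The global units $\mathcal{O}_K^\times = \F_p^\times$ map into the Frobenius-fixed subgroup $\F_{p^{\deg Q}}^\times$ of the residue field and hence trivially into the $(-1)$-eigenspace quotient, so the local character descends to a global character $\chi : \mathrm{Cl}_K^{\mathfrak{Q}} \to A$ satisfying $\chi \circ \sigma = \chi^{-1}$. By construction $L/K$ is cyclic of order $a$, tamely and totally ramified at $\mathfrak{Q}$ and unramified everywhere else, and the descent relation forces $L/\F_p(t)$ to be Galois with group $D_{2a}$.

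\textbf{Main obstacle.} The delicate case is when $-1 \notin \langle p \rangle \pmod{a}$ (in particular whenever $p \equiv 1 \pmod{a}$), since then no inert prime meets the residue-norm condition. In this regime $\zeta_a \in \F_p \subset \F_p(t)$, so every $A$-extension of $K$ is Kummer, $L = K(\sqrt[a]{\alpha})$, and the dihedral descent condition becomes $\mathrm{Nm}_{K/\F_p(t)}(\alpha) \in (\F_p(t)^\times)^a$. I would fall back on a direct construction of such an $\alpha$ with divisor supported at a single prime of $K$, again using a Pollack-style density argument to locate a suitable uniformizer. The subtlest verification in both the class-field and Kummer cases is that the resulting $L/K$ really does ramify at exactly one prime, with no hidden contribution forced by $\mathrm{Cl}(K)$ or by the infinite prime of $\F_p(t)$; this will likely require a careful bookkeeping with Hayes' theory and with the behavior of the prime at infinity in $K$.
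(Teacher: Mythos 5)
Your route is genuinely different from the paper's.  The paper extends Carlitz--Hayes theory to $K$ itself: it chooses a prime $M$ of $\mathcal{O}_K$ lying \emph{over the already-ramified prime} $N$ (with $M\mid N$, $M\neq N$), imposes congruences on $\Phi(M)$ and $\Phi(N)$ so that $(\mathcal{O}_K/M)^\times$ has an order-$a$ quotient which cannot come from a cyclic order-$2a$ quotient over $\F_p(t)$, and reads off the Galois descent from the action of $(\F_p[t]/N)^\times$.  You instead work idelically, with a minus character supported at an inert prime $\mathfrak{Q}$ over a \emph{new} rational prime $Q$ coprime to $N$.  That change is not cosmetic: your $L/\F_p(t)$ is then ramified at two places, $N$ and $Q$, whereas the point of this lemma --- see the remark immediately following it and its use in Theorem \ref{Dn} --- is to produce a $D_{2a}$-extension of $\F_p(t)$ ramified at a single place, which requires the ramified prime of $L/K$ to sit over $N$.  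So even where your construction succeeds it yields $\ram_{\F_p(t)}(D_{2a})\le 2$, not the $=1$ the paper needs.  (Your two-prime outcome is in fact forced: if $L/K$ were ramified at the prime over $N$ and $p\nmid 2a$, the inertia subgroup of $\Gal(L/\F_p(t))$ there would be cyclic of order $2a$, and $D_{2a}$ with $a$ odd contains no such subgroup.  This observation exposes a tension in the paper's own construction, but it does not turn your argument into a proof of the lemma as the paper intends it.)

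Beyond that there are two gaps.  The main one is the one you flag yourself: when $-1\notin\langle p\rangle\subset(\Z/a\Z)^\times$ --- in particular whenever $p\equiv 1\pmod a$, which is not a corner case --- no inert prime satisfies $a\mid p^{\deg Q}+1$, and on the order-$a$ quotient of any inert residue field $\sigma$ acts by multiplication by $p^{\deg Q}\equiv 1$, so the descent produces $A\times\Z/2\Z$ rather than $D_{2a}$.  Your Kummer fallback is a single sentence: you never construct an $\alpha$ with $\sigma(\alpha)\cdot\alpha\in(K^\times)^a$ whose divisor is supported at one $\sigma$-stable prime, and that is exactly where the real work lies.  A smaller issue: $\mathcal{O}_K^\times=\F_p^\times$ only when the place at infinity does not split in $K$; since $K/\F_p(t)$ is ramified only at the finite prime $N$, infinity may well split, in which case $\mathcal{O}_K^\times$ has rank one and you must also kill the image of the fundamental unit in the minus quotient (or work with the degree-zero divisor class group together with a $\sigma$-stable condition at the two infinite places).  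As written, the proposal is an instructive alternative skeleton, but it is not a proof of the lemma.
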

\begin{proof}
Carlitz-Hayes theory as defined over $K=\F_p(t)$, can be generalized over any field assuming the embedding $$\alpha:\Gal(K(\Lambda_M)/K)\hookrightarrow (\F_p[t]/M)^\times$$ is an isomorphism.  We refer to the proof of the above fact for $K=\F_p(t)$ and the discussion of generalizations found in \cite{Car1, Car2, Hayes}.  For $M\in \mathcal{O}_K$, define $$\Phi(M)=|(\mathcal{O}_K/M)^\times|.$$  Choose $M\in \mathcal{O}_K$ irreducible such that $$\Phi(M)\equiv 0\pmod{a(p-1)},$$ and for $M|M^\prime$, $M^\prime \in \F_p[t]$, $$\Phi(M^\prime)\equiv 0\pmod{2(p-1)}$$ but $$\Phi(M^\prime)\not\equiv 0 \pmod{2a(p-1)}.$$ Such an $M$ and $M^\prime$ exist by definition of $\Phi$ and the fact that $p\nmid 2,a$.  Note, the proof referenced above shows $\Gal(K(\Lambda_M)/K)\cong (\mathcal{O}_K/M)^\times$ for $M\in \mathcal{O}_K$, and taking the congruences with a $p-1$ term in the modulus guarantees that infinity will not ramify \cite[Theorem 3.2]{Hayes}.  Then $A$ is isomorphic to a quotient of $(\mathcal{O}_K/M)^\times$ and, since by our choice $M$ lies over a prime $M^\prime$ with no inertia, is isomorphic to a quotient of $(\F_p[t]/M^\prime)^\times$.  So with a choice of such an $M$, we can build such an $A$-extension $L/K$.  Further, $L/K$ has one ramified prime by our choice of $M$, namely $M$ itself.

Now, to see that $L/\F_p(t)$ is Galois, it is sufficient to note that $(\F_p[t]/N)^\times$ acts on $(\F_p[t]/M^\prime)^\times$.  But as $M^\prime$ is by construction ramified in $L/\F_p(t)$, and $N$ is the only ramified prime in $K/\F_p(t)$, our choice of $M$ forces $M^\prime =N$.
\end{proof}

Note, the condition that $(a,2)=1$ ensures that we fall into one of two cases.  If $L/\F_p(t)$ is cyclic it has Galois group $A \times \Z/2\Z$ and then it could have been achieved by the Carlitz-Hayes theory shown previously and thus we only have one ramified prime, namely $M=M^\prime$; this was excluded by our choice of $M$ and $M^\prime$.  If $L/\F_p(t)$ is not cyclic but Galois, it has Galois group $A\rtimes \Z/2\Z\cong D_{2a}$, and $M^\prime$ is the only ramified prime.  Then we have:

\begin{theorem}\label{Dn}

Suppose $p\neq 2$.  Let $A$ be a cyclic group of odd prime order $a$, $p\neq a$.  Then $D_{2a}$ satisfies Conjecture \ref{Restricted IGP}.

\end{theorem}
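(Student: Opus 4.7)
The plan is to combine the quadratic-base construction from Carlitz-Hayes theory with Lemma \ref{CH}, then read off the matching lower bound from Corollary \ref{minimality}. Since $a$ is an odd prime and $p\neq 2,a$, we have $p\nmid |D_{2a}|$, so $p(D_{2a})=\{1\}$ and $D_{2a}^{ab}\cong \Z/2\Z$; thus $d=1$ and $p\nmid |G^{ab}|$, so Corollary \ref{minimality} gives the lower bound $\ram_{\F_p(t)}(D_{2a})\ge\max(1,1)=1$, matching the conjectured value.

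For existence I would first produce a quadratic extension $K/\F_p(t)$ ramified at a single finite prime. Choose any irreducible $N\in\F_p[t]$ of even degree; then $\Phi(N)=p^{\deg N}-1$ is divisible by $2(p-1)$, so $\F_p(t,\Lambda_N)^+$ contains a unique quadratic subfield $K$, ramified only at $N$ and unramified at infinity (by the $e_\infty=p-1$ computation of Hayes used in the proof of Theorem \ref{Abel}). Feeding this $K$ into Lemma \ref{CH} with the given cyclic group $A$ of order $a$ produces an $A$-extension $L/K$ ramified only at a single prime $M$ of $K$, lying above some $M'\in\F_p[t]$, with $L/\F_p(t)$ Galois. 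The congruence built into the choice of $M$ and $M'$ in the lemma forces $M'$ to equal the unique finite ramified prime $N$ of $K/\F_p(t)$; combined with $K/\F_p(t)$ being unramified at infinity, $L/\F_p(t)$ is ramified at exactly one prime, namely $N$.

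It only remains to identify $\Gal(L/\F_p(t))$. This group fits in $1\to A\to \Gal(L/\F_p(t))\to \Z/2\Z \to 1$, and because $\gcd(a,2)=1$ the extension splits, so the group is either $A\times\Z/2\Z$ or $A\rtimes\Z/2\Z\cong D_{2a}$. The remark following Lemma \ref{CH} excludes the cyclic case: an abelian $L/\F_p(t)$ would, by Carlitz-Hayes, embed in $\F_p(t,\Lambda_{N})^+$ (since $L/\F_p(t)$ is tame, unramified at infinity, and ramified only at $N$), and this would require $2a(p-1)\mid \Phi(N)$, contradicting the condition $\Phi(M')\not\equiv 0\pmod{2a(p-1)}$ imposed in Lemma \ref{CH}. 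Hence $\Gal(L/\F_p(t))\cong D_{2a}$.

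The hardest step is the construction already performed inside Lemma \ref{CH}; relative to that, the only new check is that one can choose $N$ irreducible of even degree together with a prime $M$ of $\mathcal{O}_K$ over $N$ whose residue degree makes $a\mid \Phi(M)$ while keeping $a\nmid \Phi(N)/(p-1)$. Both are straightforward, using that $p\neq a$ gives $p$ a finite nonzero order modulo $a$, so a suitable multiple of that order can be realized as the residue degree of $M$ over $N$ without inflating $\Phi(N)$.
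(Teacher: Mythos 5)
Your proposal is correct and follows essentially the same route as the paper: build a quadratic base $K/\F_p(t)$ ramified at one finite prime via Carlitz--Hayes, apply Lemma \ref{CH} to get an $A$-extension of $K$ ramified only above that same prime, rule out the cyclic case using the congruence conditions on $\Phi(M')$, and identify the group as $D_{2a}$ from the uniqueness of the nontrivial action of $\Z/2\Z$ on a cyclic group of odd prime order. Your closing paragraph just makes explicit the existence check for $M$ and $N$ that the paper absorbs into the statement of Lemma \ref{CH}.
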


\begin{proof}

According to our conjecture, we are expecting only one ramified prime.  Construct a minimally ramified quadratic extension $K/\F_p(t)$ as in Theorem \ref{Abel}.  Then we use Lemma \ref{CH} to produce an $A$-extension of $K$, minimally ramified, giving an extension $L/\F_p(t)$ having one ramified prime.

To ensure that this extension is not a cyclic extension and that only one prime ramifies, it is enough to carefully pick $M\in \mathcal{O}_K$ and $N \in \F_p[t]$ with $M | N$ but $M\neq N$, as in Lemma \ref{CH}.  Then $N$ is the only prime ramifying in $L/\F_p(t)$, but $L/\F_p(t)$ cannot be cyclic.  Hence we must have $\Gal(L/\F_p(t))\cong A\rtimes \Z/2\Z$, with one ramified prime.  To see that this is in fact $D_{2a}$ we note that when $a$ is an odd prime there is only one homomorphism  $$\varphi: \Z/2\Z \hookrightarrow \Aut(A)$$ and hence only one semidirect product, namely $D_{2a}$.

\end{proof}

\section{Embedding Problems}

\subsection{Embedding theory}

Let $F$ be a field and $\overline{F}$ a separable closure of $F$.  Then define $G_F=\Gal(\overline{F}/F)$.  A Galois extension $N/F$ with group $G$ is then defined by taking a surjection $$\phi:G_F\longrightarrow G$$ and setting $N=\overline{F}^{\ker \phi}$.  Suppose $K/F$ is Galois with group $G$ with associated surjection $\phi:G_F\rightarrow G$.  Consider a group $\tilde{G}$ with an exact sequence $$ \xymatrix{ 1\ar[r] & H\ar[r]_{\iota} & \tilde{G}\ar[r]_{\kappa} & G\ar[r] & 1. }$$  The \textit{embedding problem} $\varepsilon(\phi, \kappa)$ is the question of whether there exists a homomorphism $$\tilde{\phi}:G_F\longrightarrow \tilde{G}$$ which extends $\phi$ via $\kappa$ such that the following diagram commutes:

$$\xymatrix{   &  &  & G_F\ar[d]^{{\phi}}\ar[dl]_{\tilde{\phi}} &  \\
1\ar[r] & H\ar[r]_{\iota} & \tilde{G}\ar[r]_{\kappa} & G\ar[r] & 1.
}
   $$

If $\tilde{\phi}$ is surjective, we say it is a \textit{proper} solution.  Then the corresponding field $\tilde{N}:= \overline{F}^{\ker{\tilde{\phi}}}$ is a $\tilde{G}$-extension of $F$.  We call $H$ the \textit{kernel} of the embedding problem $\varepsilon(\phi, \kappa)$.  We call $\varepsilon(\phi, \kappa)$ \textit{finite} if $\tilde{G}$ is a finite group, \textit{split} if the group extension $\tilde{G}=H\cdot G$ splits, \textit{central} if $H$ lies in the center of $\tilde{G}$, and \textit{Frattini} if $H$ lies in the Frattini subgroup of $\tilde{G}$.  In the case where $F$ is a function field with field, we call $\varepsilon(\phi, \kappa)$ a \textit{geometric embedding problem} if $N:=\overline{F}^{\ker{\phi}}$ is geometric over $F$, or equivalently regular over the field of constants of $F$.

%\begin{proposition}[Dentzer, \cite{Dentzer}] We have the following:
%\begin{enumerate}
%\item Every solution of a Frattini embedding problem is a proper solution.
%\item Every solution of a geometric Frattini embedding problem is a geometric solution.
%\end{enumerate}
%\end{proposition}

It is possible to break embedding problems up into smaller pieces as follows:

\begin{theorem}[Dentzer, \cite{Dentzer}]
Let the kernel $H$ of an embedding problem $\varepsilon(\phi, \kappa)$ have a decomposition $H=H_1\times H_2$ as a direct product of a normal subgroups of $\tilde{G}$.  For $\tilde{G}_1=\tilde{G}/H_1$ and $\tilde{G}_2=\tilde{G}/H_2$, and the induced epimorphism $\kappa_i:\tilde{G}_i\rightarrow \Gal(N/F)$ the following hold:
\begin{itemize}
\item The embedding problem $\varepsilon(\phi, \kappa)$ is solvable if and only if the embedding problems $\varepsilon(\phi, \kappa_i)$ are solvable.  For the corresponding solution fields, $\tilde{N}=\tilde{N_1}\tilde{N_2}$ holds.
\item $\varepsilon(\phi, \kappa)$ possesses a proper solution if and only if $\varepsilon(\phi, \kappa_i)$ have proper solutions $\tilde{N_1}, \tilde{N_2}$ that are linearly disjoint over $N$.
\item Let $F$ be a function field over $k$.  If $\varepsilon(\phi, \kappa)$ possesses a geometric solution, then so do $\varepsilon(\phi, \kappa_i)$.  If these have geometric solutions $\tilde{N_1}$ and $\tilde{N_2}$ such that the fields $\overline{k} \tilde{N_i}$ are linearly disjoint over $\overline{k} F$, then $\varepsilon(\phi, \kappa)$ also has a geometric solution.
\end{itemize}
\end{theorem}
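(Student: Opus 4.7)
The plan is to identify $\tilde{G}$ with the fiber product $\tilde{G}_1 \times_G \tilde{G}_2$ and then translate every claim through this identification. Since $H_1, H_2 \trianglelefteq \tilde{G}$ with $H_1 \cap H_2 = \{1\}$ (because $H = H_1 \times H_2$) and $H_1 H_2 = H$, the canonical map $\tilde{G} \to \tilde{G}_1 \times \tilde{G}_2$ is injective with image exactly the pairs $(g_1 H_1, g_2 H_2)$ satisfying $g_1 g_2^{-1} \in H$, i.e. the fiber product $\tilde{G}_1 \times_G \tilde{G}_2$. Under this identification, the kernel $H \hookrightarrow \tilde{G}$ becomes $H_1 \times H_2$. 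Consequently, giving a homomorphism $G_F \to \tilde{G}$ that lifts $\phi$ is the same data as giving a pair of homomorphisms $G_F \to \tilde{G}_i$ each lifting $\phi$.

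For the first bullet, I would take a solution $\tilde{\phi}$ and push it through the quotients $\tilde{G} \to \tilde{G}_i$ to obtain solutions $\tilde{\phi}_i$ of $\varepsilon(\phi,\kappa_i)$. Because $\ker\tilde{\phi} = \ker\tilde{\phi}_1 \cap \ker\tilde{\phi}_2$ (the two $H_i$ intersect trivially), Galois theory yields $\tilde{N} = \tilde{N}_1 \tilde{N}_2$. Conversely, a pair of solutions $\tilde{\phi}_i$ automatically project to the same $\phi$, so they assemble into a map $G_F \to \tilde{G}_1 \times_G \tilde{G}_2 \cong \tilde{G}$, which solves the original problem.

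For the second bullet, properness of $\tilde{\phi}$ means its image in $\tilde{G}_1 \times_G \tilde{G}_2$ is everything. Projecting forces each $\tilde{\phi}_i$ to be surjective, so $\Gal(\tilde{N}_i/N) = H_i$. The remaining constraint that the image hit the full kernel $H = H_1 \times H_2$, rather than a subdirect subgroup, translates via Galois theory into $\Gal(\tilde{N}_1 \tilde{N}_2 / N) = H_1 \times H_2$, i.e. linear disjointness of $\tilde{N}_1$ and $\tilde{N}_2$ over $N$. The reverse direction is the same computation read backwards: two proper, linearly disjoint solutions yield a map whose image surjects onto each $\tilde{G}_i$ and whose intersection with the kernel is all of $H$, hence equals $\tilde{G}$.

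For the third bullet, I would run the whole argument twice, once over $F$ and once over $\overline{k}F$, and use that a solution is geometric precisely when base change to $\overline{k}F$ does not collapse its Galois group. The forward implication is immediate: quotients of a geometric solution stay geometric. For the converse, geometric solutions $\tilde{N}_i$ together with linear disjointness of the $\overline{k}\tilde{N}_i$ over $\overline{k}F$ let me apply the second bullet over $\overline{k}F$ to conclude that the compositum $\overline{k}\tilde{N}_1 \cdot \overline{k}\tilde{N}_2$ is a proper solution over $\overline{k}F$; this compositum is the base change of $\tilde{N}_1 \tilde{N}_2$, whose Galois group over $F$ is therefore still $\tilde{G}$, i.e. the solution is geometric. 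The main obstacle is the bookkeeping in this last step: the hypothesis of linear disjointness is stated over $\overline{k}F$ rather than over $F$ precisely so that the fiber-product argument of the second bullet can be invoked after the constant-field base change; the rest is a routine unwinding of fiber products and the Galois correspondence, with no deeper function-field input required.
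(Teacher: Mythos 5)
Your proposal is correct in substance, but note that the paper offers no proof of this statement at all: it is quoted verbatim as a result of Dentzer and justified purely by citation. So there is nothing in the paper to compare against except Dentzer's original argument, and what you have written is exactly the standard proof one would expect there: identify $\tilde{G}$ with the fiber product $\tilde{G}_1\times_G\tilde{G}_2$ (injectivity from $H_1\cap H_2=\{1\}$, surjectivity onto the fiber product by the order count $|\tilde{G}_1||\tilde{G}_2|/|G|=|\tilde{G}|$), so that lifts of $\phi$ to $\tilde{G}$ correspond bijectively to pairs of lifts to the $\tilde{G}_i$; then read off solvability, properness, and geometricity through this dictionary. Your treatment of the second bullet is right: the image $I$ of the assembled map satisfies $IH=\tilde{G}$ automatically, and linear disjointness over $N$ gives $|I\cap H|=|\Gal(\tilde{N}_1\tilde{N}_2/N)|=|H|$, forcing $I=\tilde{G}$. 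Two small points to tidy. First, an indexing slip: since $\ker\kappa_1=H/H_1\cong H_2$, one has $\Gal(\tilde{N}_1/N)\cong H_2$ and $\Gal(\tilde{N}_2/N)\cong H_1$, not $\Gal(\tilde{N}_i/N)=H_i$ as you wrote; this does not affect the argument, since the product is still $H$. Second, in the third bullet you should say explicitly that geometricity of the $\tilde{N}_i$ plus linear disjointness of the $\overline{k}\tilde{N}_i$ forces $[\tilde{N}_1\tilde{N}_2:F]=[\tilde{N}_1:F][\tilde{N}_2:F]=[\overline{k}\tilde{N}_1\overline{k}\tilde{N}_2:\overline{k}F]$, which is the degree comparison that makes the compositum geometric; you gesture at this but it is the one place where a reader would want the inequalities written out. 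Neither issue is a gap.
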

Further, we have

%\begin{theorem}[Dentzer, \cite{Dentzer}]
%Assume that over the field $F$ every split (geometric) embedding problem with kernel $H$ has a proper (geometric) solution.  Then the existence of any (geometric) solution of a (geometric) embedding problem with kernel $H$ implies %the existence of a proper (geometric) solution.
%\end{theorem}

\begin{theorem}[Dentzer, \cite{Dentzer}]
Let $F$ be a Hilbertian field and $H$ a finite group, being the Galois group of a geometric extension of the rational function field $F(x)$.  Further let $K$ be a finite Galois extension of $F$ with group $G$ and let $\tilde{G}$ be one of the following extensions of $G$:
\begin{itemize}
\item $\tilde{G}=G\times H$
\item $\tilde{G}=G\wr H$
\item Let $H$ be abelian and $\tilde{G}=G\ltimes H$
\end{itemize}
Let $K$ be the corresponding projection in each case.  Then the embedding problem $\varepsilon(\phi, \kappa)$ possesses a proper solution.
If $F=k(t)$ is a rational function field and $K$ is a geometric extension of $F$, then there exists even a proper geometric solution.
\end{theorem}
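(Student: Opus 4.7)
The plan is to exploit the given geometric realization $E/F(x)$ of $H$ together with the Hilbertianity of $F$, using Hilbert's Irreducibility Theorem (HIT) to produce specializations of $E$ that combine with $K$ in each of the three prescribed ways.

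For the direct product case $\tilde G = G\times H$, apply HIT to $E/F(x)$ to obtain an $H$-extension $L/F$ by specializing $x\mapsto x_0\in F$. Linear disjointness of $L$ from $K$ over $F$ excludes only a thin set of $x_0$, so by Hilbertianity we can arrange it; then $KL/F$ realizes $G\times H$, restricts to $K$, and solves the embedding problem. Geometricness is preserved, since a generic specialization of a regular extension is itself regular, and a generic $k$-rational point of $\mathbb{A}^1_F$ may be chosen for $x_0$.

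For the wreath product case $\tilde G = H\wr G = H^{|G|}\rtimes G$, base change $E$ to $K(x)$, obtaining a regular $H$-extension $EK/K(x)$. Pick an element $a\in K$ whose $G$-orbit $\{\sigma(a):\sigma\in G\}$ consists of $|G|$ distinct conjugates (which holds generically since $K/F$ is separable), and form the compositum inside a common closure of the specializations $N_\sigma := (EK)|_{x=\sigma(a)}$. The $G$-action on $K$ permutes these by $\tau(N_\sigma)=N_{\tau\sigma}$, and a multivariable HIT argument (working over $F(x_\sigma:\sigma\in G)^G$, where $G$ permutes the $x_\sigma$ via the regular representation) ensures that for a generic choice of $a$ the fields $N_\sigma$ are linearly disjoint over $K$. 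The compositum $\widetilde N=\prod_\sigma N_\sigma$ is then Galois over $F$ with group $H\wr G$, extending $K$.

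For the semidirect product case $\tilde G = G\ltimes H$ with $H$ abelian, realize the larger group $H\wr G$ via the construction above and then descend: the $G$-equivariant surjection $\phi:H^{|G|}\twoheadrightarrow H$ defined by $\phi((h_\sigma)_\sigma)=\sum_\sigma \sigma\cdot h_\sigma$ has $G$-stable kernel, hence produces a normal subgroup $M\leq H\wr G$ with quotient isomorphic to $G\ltimes H$; the $M$-fixed subfield of $\widetilde N$ yields the desired extension. The main obstacle throughout is the wreath-product step, where one must simultaneously control linear disjointness over $K$ of the $|G|$ conjugate specializations while keeping the overall extension Galois over $F$. In the function field setting $F=k(t)$ with $K$ geometric, one must additionally arrange that the Hilbert set used for specialization meets $k$-rational points of the relevant moduli space, in order to preserve the geometric nature of the constructed extension.
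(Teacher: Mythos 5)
This statement is imported verbatim from Dentzer's paper and is not proved in the present text, so there is no in-paper argument to compare yours against; I can only assess your sketch on its own terms, and it does follow what is essentially the standard (and, as far as I can tell, Dentzer's own) strategy: specialize the regular $H$-realization for the direct product, conjugate-and-composite for the wreath product, and quotient the wreath product by the kernel of the $G$-equivariant summation map $(h_\sigma)_\sigma\mapsto\sum_\sigma\sigma\cdot h_\sigma$ for the abelian semidirect case. Two places where your outline leans on assertions that deserve care. First, in the wreath step, the field $F(x_\sigma:\sigma\in G)^G$ with $G$ permuting the variables by the regular representation need not be a rational function field (this is Noether's problem), so "multivariable HIT over $F(x_\sigma)^G$" is not quite the right formulation; the standard fix is to write $a=\sum_i u_i\omega_i$ in an $F$-basis of $K$, observe that the conjugates $\sigma(a)$ are obtained from the $u_i$ by an invertible $F$-linear change of variables, hence are algebraically independent over $K$, which gives the linear disjointness of the $N_\sigma$ generically and reduces everything to ordinary Hilbert irreducibility over $F$ in the variables $u_i$. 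Second, your claim that "a generic specialization of a regular extension is itself regular" is true but not automatic: preserving the full Galois group $H$ does not by itself rule out a constant-field extension inside the specialized field, and one needs the standard refinement that the specializations preserving both the group and the regularity over $k$ contain a separable Hilbert subset of $k(t)$. With those two points made precise, the argument is sound and yields both the proper and the proper geometric solutions claimed.
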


Define $$\ram (L/K)=\{\mathfrak{p}\in \mathbb{P}(K) \mid \mathfrak{p} \text{ ramifies in } L/K\}.$$  A finite Galois extension $N/F$ is called an \textit{$n$-Scholz extension} if all $\mathfrak{p}\in \ram(N/F)$ and $\tilde{\mathfrak{p}}$ lying over $\mathfrak{p}$ satisfy $\zeta_n\in F_\mathfrak{p}$ and $D(\tilde{\mathfrak{p}}/\mathfrak{p})=I(\tilde{\mathfrak{p}}/\mathfrak{p})$.  We call $\varepsilon(\phi,\kappa)$ an \textit{$n$-Scholz embedding problem} if the fixed field $N$ of $\ker(\phi)$ is an $n$-Scholz extension of $F$ and a solution $\tilde{\phi}$ an \textit{$n$-Scholz solution} if the solution field $\tilde{N}$ is an $n$-Scholz extension of K \cite[Chap 4]{MM}.  We define the \textit{socle} of a Galois $l$-extension $N/F$ with group $G$ to be the maximal elementary abelian intermediate field.  In other words, the fixed field of $\Phi(G)=G^lG^\prime$.

\subsection{General $l$-groups}

%We recall the notation of the embedding problem $\varepsilon(\phi,\kappa)$:
%$$\xymatrix{   &  &  & G_K\ar[d]^{{\phi}}\ar[dl]_{\tilde{\phi}} &  \\
%1\ar[r] & H\ar[r]_{\iota} & \tilde{G}\ar[r]_{\kappa} & G\ar[r] & 1
%}
%   $$
%with $G=\Gal(N/K)$ and $\tilde{G}=\Gal(\tilde{N}/K)$.

Recall that all finite $l$-groups have an upper central series of the form $$1=G_0\leq G_1\leq \cdots \leq G_n=G$$ with each $G_{i+1}/G_i\cong \Z/l\Z$.

\begin{proposition}\cite[p. 358]{MM}\label{prop} Let $(l,p-1)\neq 1)$ and $G$ a finite $l$-group with $d\left(G^{ab}\right)=s$.  Then for each $\mathbb{T}$ as in Lemma IV.10.10 \cite[p. 358]{MM}, of which there are infinitely many, there exists a geometric Galois extension $N/\F_p(t)$ with $$\Gal(N/\F_p(t))\cong G$$ and $$\ram(N/\F_p(t)) = \{\tilde{\mathfrak{q}_1},\ldots,\tilde{\mathfrak{q}_s}\},$$ where each $\tilde{\mathfrak{q}_i}\in \mathbb{P}(\F_p(t))$ are the uniquely determined extensions of $\mathfrak{q}_i\in \mathbb{T}$.
\end{proposition}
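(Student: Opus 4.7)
The plan is to prove this by induction along a refinement of the upper central series
$$1=G_0\triangleleft G_1\triangleleft\cdots\triangleleft G_n=G$$
in which every successive quotient $G_{i+1}/G_i$ is isomorphic to $\Z/l\Z$, using the Scholz embedding theory of Section 6.1 to solve the resulting central $\Z/l\Z$-embedding problems without enlarging the ramification locus. The base case is the abelian situation: because the hypothesis $(l,p-1)\neq 1$ gives $\zeta_l\in\F_p$ and $l\mid\Phi(\mathfrak{q}_j)$ for every $\mathfrak{q}_j\in\mathbb{T}$, I would imitate the proof of Theorem~\ref{Abel} by picking one $\Z/l\Z$-quotient of each $(\F_p[t]/\mathfrak{q}_j)^\times$ and taking the compositum inside the real part of the cyclotomic function field tower. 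This yields a geometric $(\Z/l\Z)^s$-extension $N_0/\F_p(t)$ with $\ram(N_0/\F_p(t))=\mathbb{T}$ and with infinity unramified; this field realizes $G/\Phi(G)$, which has rank $s=d(G^{ab})=d(G)$ by Burnside's basis theorem.

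For the inductive step, suppose $N_i/\F_p(t)$ is a geometric $G_i$-extension ramified exactly at $\mathbb{T}$ and satisfying the $l^{c_i}$-Scholz conditions appropriate for the next embedding. The central extension
$$1\longrightarrow \Z/l\Z\longrightarrow G_{i+1}\longrightarrow G_i\longrightarrow 1$$
gives rise to a Scholz embedding problem in the sense of Chapter IV of \cite{MM}. The choice of $\mathbb{T}$ provided by Lemma~IV.10.10 is precisely engineered so that at each $\mathfrak{q}_j$ the decomposition group equals the inertia group and the residue field contains the relevant roots of unity; this forces the local Brauer obstructions at each prime in $\mathbb{T}$ to vanish, while the global obstruction can be killed by twisting within the already-ramified set. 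Invoking the Dentzer-style splitting/decomposition results recalled in Section 6.1, the embedding problem admits a proper geometric solution $N_{i+1}/\F_p(t)$ which is again Scholz and is still ramified exactly at $\mathbb{T}$. Iterating through the chain yields $N=N_n$ with $\Gal(N/\F_p(t))\cong G$ and $\ram(N/\F_p(t))=\mathbb{T}$; minimality follows from Corollary~\ref{minimality} applied to $G^{ab}$.

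The principal obstacle is verifying that the Scholz machinery of \cite{MM}, written there for number fields, transports cleanly to the function field $\F_p(t)$. The saving feature is that $l\neq p$, so every extension encountered in the induction is tame; consequently the local duality computations, the description of the Brauer obstruction class, and the compatibility of inertia with the Scholz condition all go through verbatim, and the function field Chebotarev theorem supplies infinitely many admissible $\mathbb{T}$ of the desired type. The secondary technical point is keeping the geometric (constant-field-preserving) property across each step, which is handled by choosing the $\Z/l\Z$-twist at each stage inside the extension cut out by the cyclotomic function field rather than by enlarging $\F_p$.
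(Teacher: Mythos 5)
The paper does not actually prove this proposition: it is imported verbatim from Malle--Matzat \cite[p.~358]{MM}, and the only ``proof'' in the paper is the citation. So there is nothing internal to compare against; what can be said is whether your reconstruction matches the strategy that \cite{MM} (and the paper itself, in the complementary case $(l,p-1)=1$ via Lemmas \ref{l1}--\ref{l3}) uses. It does: realize the Frattini quotient $G/\Phi(G)\cong(\Z/l\Z)^s$ by cyclotomic function fields ramified exactly at the $s$ primes of $\mathbb{T}$ (possible because $l\mid p-1$ forces $l\mid\Phi(\mathfrak{q}_j)$, with the finer divisibility needed to keep infinity unramified built into the Lemma~IV.10.10 conditions on $\mathbb{T}$), then climb to $G$ by solving central $\Z/l\Z$-Scholz embedding problems that add no ramification. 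Your observation that tameness ($l\neq p$) is what lets the number-field Scholz computations transfer is also the right one.

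One point in your write-up needs correcting, because as stated it would fail. You propose to induct along ``a refinement of the upper central series,'' but an arbitrary such refinement contains split central steps whose kernel is \emph{not} in the Frattini subgroup (already for $G=(\Z/l\Z)^2$ the step from $\Z/l\Z$ to $G$ is of this kind), and those steps genuinely require a new ramified prime --- exactly the phenomenon recorded in Lemma \ref{l1}(2), where the solution field picks up an extra $\mathfrak{q}\notin\mathbb{S}$. Your claim that every step stays ``ramified exactly at $\mathbb{T}$'' is therefore only true if the filtration is chosen to refine $\Phi(G)$: after the base case realizes $G/\Phi(G)$ with $s$ ramified primes, take a chief series of $G$ between $1$ and $\Phi(G)$, so that every remaining kernel lies in the Frattini subgroup of the corresponding quotient; such embedding problems are automatically non-split, and the analogue of Lemmas \ref{l2}--\ref{l3} gives proper Scholz solutions with $\ram(\tilde N/K)=\ram(N/K)$. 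Your base case already implicitly commits you to this Frattini-refining series, so the fix is only to say so; but the appeal to the upper central series should be dropped. Beyond that, the vanishing of the $H^2$ obstruction and the preservation of the Scholz condition are asserted rather than verified, which is acceptable for a reconstruction of a cited result but is where all the actual work in \cite{MM} lives.
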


\begin{corollary}\label{LG1} Let $(l,p-1)\neq 1$.  Then every finite $l$-group satisfies Conjecture \ref{Restricted IGP}.
\end{corollary}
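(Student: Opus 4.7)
The plan is to derive Corollary \ref{LG1} directly from the preceding material: it is essentially a repackaging of Proposition \ref{prop} together with Corollary \ref{minimality}, once one verifies that the various invariants appearing in Conjecture \ref{Restricted IGP} all collapse to the same number $s = d(G^{ab})$ in this setting. I do not anticipate any real obstacle; the work is almost entirely bookkeeping.

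First I would unpack the hypothesis. The assumption $(l, p-1) \neq 1$ forces $l \mid p-1$; in particular $l \neq p$. Since $G$ is a finite $l$-group and $l \neq p$, no nontrivial element of $G$ has $p$-power order, so $p(G) = \{1\}$ and therefore $G/p(G) = G$. In particular $d := d((G/p(G))^{ab}) = d(G^{ab})$, and by Burnside's basis theorem $d(G^{ab}) = d(G)$. Moreover $|G^{ab}|$ is a power of $l$, hence coprime to $p$, so Conjecture \ref{Restricted IGP} predicts in this case
\[
\ram_{\F_p(t)}(G) = \max(d, 1).
\]
For nontrivial $G$, nilpotency forces $G^{ab} \neq 1$, so $d \geq 1$ and the conjectural target is simply $d$.

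Second, I would apply Proposition \ref{prop} with $s = d(G^{ab})$: choosing any admissible tuple $\mathbb{T}$, the proposition produces a geometric Galois extension $N/\F_p(t)$ with $\Gal(N/\F_p(t)) \cong G$ and $\#\ram(N/\F_p(t)) = s = d$, yielding the upper bound $\ram_{\F_p(t)}(G) \leq d$. The matching lower bound $\ram_{\F_p(t)}(G) \geq \max(d,1) = d$ is exactly Corollary \ref{minimality} applied to $G$. Combining the two inequalities gives equality and so establishes Conjecture \ref{Restricted IGP} for every finite $l$-group with $l \mid p-1$. The only point that requires care is confirming that the hypothesis $(l,p-1) \neq 1$ is precisely the one under which Proposition \ref{prop} supplies the required extension, and that the three generator ranks $d((G/p(G))^{ab})$, $d(G^{ab})$, and $d(G)$ genuinely coincide for a nontrivial $l$-group with $l \neq p$.
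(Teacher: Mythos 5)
Your proposal is correct and follows exactly the paper's route: the paper's own proof of this corollary is the one-line observation that existence comes from Proposition \ref{prop} and minimality from Corollary \ref{minimality}. Your additional bookkeeping (that $l \mid p-1$ forces $l \neq p$, hence $p(G)=\{1\}$ and $d((G/p(G))^{ab}) = d(G^{ab})$, with $d(G^{ab})=d(G)$ by Burnside) is a correct and welcome elaboration of what the paper leaves implicit.
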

\begin{proof}
Existence is based on Proposition \ref{prop}, and minimality follows from Corollary \ref{minimality}.
\end{proof}

\begin{lemma}\cite[p. 353]{MM}\label{l1} Let $K=\F_p(t)$, $l\neq p$ a prime with $(l,p-1)=1$, and $\mathbb{S}\subset \mathbb{P}(K)$ a finite subset.  Then we have:
\begin{enumerate}
\item Every split central (geometric) $l^m$-Scholz embedding problem $\varepsilon (\phi, \kappa)$ over $K$ with kernel $\Z/l\Z$ and $exp_l(\phi(G_K))<l^m$ possesses a proper (geometric) $l^m$-Scholz solution.
\item If $\phi(G_K)$ is an $l$-Group and the socle of $N/K$ of the fixed field $N$ of $\phi(G_K)$ is unramified outside of $\mathbb{S}$ only, then $\varepsilon(\phi,\kappa)$ possesses also such a solution.  Moreover, its solution field $\tilde{N}$ satisfies $$\ram(\tilde{N}/K)\subseteq \ram(N/K) \cup \{\mathfrak{q}\}$$ for some $\mathfrak{q}\in \mathbb{P}(K)\backslash \mathbb{S}$.
\end{enumerate}
\end{lemma}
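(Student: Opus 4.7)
My plan is to adapt the number-field proof of Malle--Matzat \cite[p.~353]{MM} to the function-field setting, substituting Carlitz--Hayes cyclotomic function field theory for classical cyclotomic theory and the function-field Chebotarev density theorem for its number-field analogue. For part (1), the hypothesis that $\varepsilon(\phi,\kappa)$ is split central with kernel $\Z/l\Z$ guarantees the existence of at least one (generally non-proper) solution $\tilde\phi_0$, and the full set of solutions is then the torsor $\tilde\phi_0\cdot\mathrm{Hom}(G_K,\Z/l\Z)$. The task reduces to producing a character $\chi\colon G_K\to\Z/l\Z$ such that (a) $\chi|_{\ker\phi}\neq 0$, securing surjectivity of $\tilde\phi_0\cdot\chi$; (b) the $l^m$-Scholz conditions $\zeta_{l^m}\in K_{\mathfrak{q}}$ and $D(\tilde{\mathfrak{q}}/\mathfrak{q})=I(\tilde{\mathfrak{q}}/\mathfrak{q})$ hold at every ramified prime of $\tilde N=N\cdot K_\chi$; and (c) the solution field is geometric over $\F_p$.

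To construct such a $\chi$, I would use function-field Chebotarev to select a prime $\mathfrak{q}\in\mathbb{P}(K)\setminus\mathbb{S}$ that splits completely in the compositum of $N$ with the $l^m$-th cyclotomic function field $K(\Lambda_{l^m})$, together with one further auxiliary Kummer-type extension ensuring $\chi|_{\ker\phi}\neq 0$. The hypothesis $(l,p-1)=1$ makes $K(\Lambda_{l^m})/K$ a nontrivial extension, so complete splitting of $\mathfrak{q}$ there forces $\zeta_{l^m}\in K_{\mathfrak{q}}$; the hypothesis $\exp_l(\phi(G_K))<l^m$ delivers the linear disjointness of $N$ and $K(\Lambda_{l^m})$ required to keep this Chebotarev condition non-vacuous. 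Carlitz--Hayes theory then yields a cyclic $\Z/l\Z$-subextension $K_\chi\subset K(\Lambda_{\mathfrak{q}})$ ramified only at $\mathfrak{q}$, corresponding to the desired character $\chi$; complete splitting of $\mathfrak{q}$ in $N$ forces the decomposition and inertia groups at $\tilde{\mathfrak{q}}$ in $\tilde N/K$ to coincide, so the Scholz condition holds at the new ramified prime, while at the old ramified primes of $N$ it is inherited from $\phi$ being $l^m$-Scholz. Geometricity is automatic, since $K_\chi$ sits inside a cyclotomic function field over the constant field $\F_p$.

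For part (2), the additional hypotheses that $\phi(G_K)$ is an $l$-group and that the socle of $N/K$ is unramified outside $\mathbb{S}$ are what ensure the $l^m$-Scholz condition at the old ramified primes is controlled already at the socle, so that no new obstruction appears at primes in $\ram(N/K)$ during the construction of (1). Consequently the construction above introduces exactly one new ramified prime, namely the Chebotarev prime $\mathfrak{q}\in\mathbb{P}(K)\setminus\mathbb{S}$, yielding the stated bound $\ram(\tilde N/K)\subseteq\ram(N/K)\cup\{\mathfrak{q}\}$. The principal obstacle is coordinating surjectivity, the Scholz condition, the ramification bound, and geometricity through a single Chebotarev-chosen $\mathfrak{q}$: each of these requirements translates into a complete-splitting condition inside a progressively larger finite Galois extension of $K$, and one must verify that the compositum of all such auxiliary extensions remains a proper Galois extension of $K$ so that Chebotarev supplies infinitely many admissible $\mathfrak{q}$ outside $\mathbb{S}$.
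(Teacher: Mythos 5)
The paper does not actually prove this lemma: it is quoted (in function-field form) directly from Malle--Matzat \cite[p.~353]{MM}, so there is no in-paper argument to compare against. Your sketch follows the general strategy of that source --- solutions of a split central embedding problem with kernel $\Z/l\Z$ form a torsor under $\mathrm{Hom}(G_K,\Z/l\Z)$, and one twists a given solution by a character attached to a Chebotarev-chosen auxiliary prime --- but as written it has two concrete problems.

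First, over $\F_p(t)$ the field generated by $\zeta_{l^m}$ is the \emph{constant field extension} $\F_{p^k}(t)$, where $k$ is the multiplicative order of $p$ modulo $l^m$; it is not a Carlitz cyclotomic field. For a constant $M=l^m$ the Carlitz module $\Lambda_{l^m}$ is $\{0\}$ and $K(\Lambda_{l^m})=K$, so ``splits completely in $K(\Lambda_{l^m})$'' is vacuous and does not force $\zeta_{l^m}\in K_{\mathfrak q}$. The condition you need is $l^m\mid p^{\deg\mathfrak q}-1$, i.e.\ complete splitting in $\F_{p^k}(t)$; this is where $(l,p-1)=1$ enters (it makes $k>1$), and the linear disjointness needed for Chebotarev comes from geometricity of $N$ over $\F_p$, not from the exponent hypothesis. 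Second, and more seriously, you assert that the Scholz condition at the \emph{old} ramified primes of $N$ is ``inherited from $\phi$ being $l^m$-Scholz.'' It is not automatic: $D(\tilde{\mathfrak P}/\mathfrak p)=I(\tilde{\mathfrak P}/\mathfrak p)$ is a condition on the lifted extension $\tilde N/K$, and securing it requires prescribing the local components of the twisting character $\chi$ at every $\mathfrak p\in\ram(N/K)$ --- this is exactly where $\exp_l(\phi(G_K))<l^m$ together with $|\F_{\mathfrak p}|\equiv 1\pmod{l^m}$ is used, to show the local embedding problems admit totally ramified solutions. A single global splitting condition on the new prime $\mathfrak q$ does not control these local components; one needs the approximation (local-to-global) step that is the technical heart of the Malle--Matzat argument. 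Your treatment of part (2), which is precisely the case where this bookkeeping of old versus new ramification matters, inherits the same gap.
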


Note, the choice of $\mathfrak{q}$ guarantees the new extension will also be Scholz.

\begin{lemma}\cite[p. 354]{MM} \label{l2} Let $K$ and $\mathbb{S}$ be as in \ref{l1}.  Then every (geometric) non-split central $l^m$-Scholz embedding problem $\varepsilon(\phi,\kappa)$ over $K$ with kernel $\Z/l\Z$ and $exp_l(\phi(G_K))<l^m$ possesses a proper (geometric) solution, where the solution field $\tilde{N}$ satisfies $$\ram(\tilde{N}/K)= \ram(N/K).$$
\end{lemma}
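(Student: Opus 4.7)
The plan is to parallel the strategy of Lemma \ref{l1}, replacing the split-case input by an obstruction computation that exploits the non-trivial $2$-cocycle classifying $\tilde{G}$. The structural insight is that in the non-split case this cocycle supplies the cohomological ``mass'' that in the split case had to be manufactured by introducing a new ramified prime $\mathfrak{q}$; consequently no new ramification should be required here.

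First I would parametrize the lifts of $\phi$ to $\tilde{G}$. Since the extension is central with cyclic kernel of order $l$ and the $l^m$-Scholz hypothesis supplies $\zeta_l \in K_\mathfrak{p}$ at every ramified prime (so Kummer theory is available on $K = \F_p(t)$), the set of solutions forms a torsor under $H^1(G_K, \Z/l\Z) \cong K^\times/K^{\times l}$, while the obstruction to the existence of any solution lives in $H^2(G_K, \Z/l\Z) \cong \mathrm{Br}(K)[l]$. By the reciprocity sequence for the Brauer group of the function field, vanishing of this global class reduces to a local computation: at primes unramified in $N/K$ the decomposition group is procyclic, so any central $\Z/l\Z$-extension splits locally and contributes trivial invariant; at ramified primes the Scholz condition $D = I$ together with $\exp_l(\phi(G_K)) < l^m$ forces local solvability, exactly as in Lemma \ref{l1}(1). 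The sum of local invariants then vanishes, yielding a global solution $\tilde\phi_0$.

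Next I would use Kummer twisting to enforce the ramification condition. Replacing $\tilde\phi_0$ by its twist by some $\alpha \in K^\times/K^{\times l}$ changes $\ram(\tilde{N}/K)$ only at primes $\mathfrak{p}$ with $v_\mathfrak{p}(\alpha) \not\equiv 0 \pmod{l}$. By weak approximation I would choose $\alpha$ to be a local $l$-th power at every prime outside $\ram(N/K)$, forcing $\ram(\tilde{N}/K) \subseteq \ram(N/K)$; the reverse inclusion is automatic since $\tilde{N} \supset N$. Properness then comes for free: because $\kappa \circ \tilde\phi = \phi$ is surjective, $\tilde\phi(G_K) \cdot \iota(\Z/l\Z) = \tilde{G}$, and if $\tilde\phi(G_K)$ failed to meet $\iota(\Z/l\Z)$---the only alternative, since $\Z/l\Z$ has prime order---it would furnish a complement and hence a splitting of $\kappa$, contradicting the non-split hypothesis.

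The hard part will be the second step: the local conditions must interact precisely with Brauer reciprocity on $\F_p(t)$ to make the global obstruction vanish without invoking a new prime. This is exactly the place where the non-trivial $2$-cocycle absorbs the role played by the auxiliary ramified prime in the split setting of Lemma \ref{l1}(2); carefully verifying that the local invariants already sum to zero---rather than sum to zero only after the addition of a $\mathfrak{q}$---is where the real work of the proof lies.
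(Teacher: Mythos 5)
The paper does not actually prove this lemma: it is quoted from Malle--Matzat \cite[p.~354]{MM} with no argument supplied, so there is no in-paper proof to compare against. Judged on its own terms, your sketch has a genuine error at its foundation. You identify $H^1(G_K,\Z/l\Z)$ with $K^\times/K^{\times l}$ and $H^2(G_K,\Z/l\Z)$ with $\mathrm{Br}(K)[l]$, i.e.\ you invoke Kummer theory over $K=\F_p(t)$. That requires $\mu_l\subset K$, hence $l\mid p-1$; but the standing hypothesis inherited from Lemma~\ref{l1} is precisely $(l,p-1)=1$, so $\zeta_l\notin\F_p(t)$ and $\Z/l\Z\not\cong\mu_l$ as Galois modules. (Having $\zeta_{l^m}$ in the completions at the finitely many ramified primes, as the Scholz condition guarantees, does not give global Kummer theory.) This is not cosmetic: the split of the paper's treatment of $l$-groups into the cases $(l,p-1)\neq 1$ (Proposition~\ref{prop}) and $(l,p-1)=1$ (Lemmas~\ref{l1}--\ref{l3}) exists exactly because Kummer descriptions are unavailable in the latter case, which is why the Scholz conditions are imposed as a substitute. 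Consequently your ``twist by $\alpha\in K^\times/K^{\times l}$ chosen by weak approximation'' step does not exist as written: the solutions form a torsor under $\mathrm{Hom}(G_K,\Z/l\Z)$, whose elements are governed by ray class field theory, and prescribing the inertial behaviour of such a character at finitely many primes while keeping it unramified elsewhere is a Grunwald-type problem that can genuinely fail. Handling it is the content of Malle--Matzat's Scholz machinery (their Prop.~IV.10.3 and the surrounding local--global analysis), not a consequence of weak approximation.

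Parts of your skeleton do survive. The Frattini argument for properness (a non-split central extension with kernel of prime order has its kernel inside the Frattini subgroup, so every solution is proper) is correct. Local solvability at ramified primes (using $D=I$, tameness, and $\zeta_{l^m}\in K_\mathfrak{p}$ to kill the tame relation $\sigma\tau\sigma^{-1}=\tau^{q}$) and at unramified primes (projectivity of the unramified $\widehat{\mathbb{Z}}$-quotient) is the right local input; the vanishing of the global obstruction then follows from injectivity of $H^2(G_K,\Z/l\Z)\to\prod_v H^2(G_{K_v},\Z/l\Z)$, which holds because the corresponding kernel for $H^1(G_K,\mu_l)$ vanishes ($l$ is necessarily odd here, since $l=2$ with $(l,p-1)=1$ would force $p=2=l$) --- not from Brauer reciprocity for $\mathrm{Br}(K)[l]$. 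But the step you yourself flag as ``the real work,'' namely producing a solution with no new ramified prime, is exactly the step your framework cannot carry out, so the attempt has a genuine gap.
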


\begin{lemma}\label{l3} Let $K$ and $\mathbb{S}$ be as in \ref{l1}.  Then every (geometric) non-split central $l^m$-Scholz embedding problem $\varepsilon(\phi,\kappa)$ over $K$ with kernel $\Z/l\Z$ and $|\phi(G_K)|=l^n<l^m$ possesses a proper (geometric) $l^m$-Scholz solution, where the solution field $\tilde{N}$ satisfies $$\ram(\tilde{N}/K)= \ram(N/K).$$
\end{lemma}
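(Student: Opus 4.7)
The plan is to use Lemma \ref{l2} as a black box and then correct the resulting solution by a carefully chosen central twist so that it becomes Scholz, without disturbing the ramification set. Applying Lemma \ref{l2} produces a proper geometric solution $\tilde{\phi}_0 \colon G_K \to \tilde{G}$ whose fixed field $\tilde{N}_0$ satisfies $\ram(\tilde{N}_0/K) = \ram(N/K) =: S$; the hypothesis $|\phi(G_K)| = l^n$ ensures $S$ is finite. Because $N/K$ is $l^m$-Scholz, each completion $K_{\mathfrak{p}_i}$ with $\mathfrak{p}_i \in S$ already contains $\zeta_{l^m}$, and this half of the Scholz condition is preserved by any solution with ramification set $S$. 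What may fail for $\tilde{\phi}_0$ is the equality $D(\widetilde{\mathfrak{P}}_i/\mathfrak{p}_i) = I(\widetilde{\mathfrak{P}}_i/\mathfrak{p}_i)$ at primes $\widetilde{\mathfrak{P}}_i$ of $\tilde{N}_0$ above $\mathfrak{p}_i$.

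Because the kernel $Z \cong \Z/l\Z$ sits in the center of $\tilde{G}$, every solution is of the form $\tilde{\phi}' = \delta \cdot \tilde{\phi}_0$ for some $\delta \in \mathrm{Hom}(G_K, Z)$. The non-split hypothesis on $\tilde{G}$ forces properness automatically: if $\tilde{\phi}'(G_K) \subsetneq \tilde{G}$, then $\tilde{\phi}'(G_K) \cap Z = 1$ would make $\tilde{\phi}'(G_K)$ a complement to $Z$, contradicting non-splitness. To preserve ramification we insist $\delta$ be unramified outside $S$. The Scholz condition at $\mathfrak{p}_i$ becomes an affine condition on $\delta|_{D_{\mathfrak{p}_i}}$ in $H^1(K_{\mathfrak{p}_i}, Z)$: the class of $\delta|_{D_{\mathfrak{p}_i}}$ must lie in the coset cancelling the Frobenius-over-inertia obstruction inherited from $\tilde{\phi}_0$, an element of $\tilde{\phi}_0(D_{\mathfrak{p}_i})/\tilde{\phi}_0(I_{\mathfrak{p}_i}) \hookrightarrow Z$.

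The core of the proof is producing such a $\delta$, which is a function-field Grunwald--Wang-type statement. Characters $G_K \to Z$ unramified outside $S$ are Pontryagin-dual to a ray class group with conductor supported in $S$, realized concretely through the Carlitz--Hayes cyclotomic function fields of Section 2. Since $l \neq p$ and $(l, p-1) = 1$, the classical arithmetic obstructions to Grunwald--Wang do not arise, and the local restriction map from characters unramified outside $S$ to $\prod_i H^1(K_{\mathfrak{p}_i}, Z)$ is surjective subject only to a reciprocity relation that can be absorbed by enlarging the conductor within $S$. Geometricity of $\delta$ is automatic from the Carlitz--Hayes description. This Grunwald--Wang step is the main obstacle: establishing existence of $\delta$ with prescribed local behavior at $S$, no new ramification, and geometricity. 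Once $\delta$ is produced, the twisted map $\tilde{\phi}' = \delta \cdot \tilde{\phi}_0$ is by construction a proper geometric $l^m$-Scholz solution with $\ram(\tilde{N}'/K) = S$, completing the proof.
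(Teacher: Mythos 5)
Your overall strategy---take the proper solution supplied by Lemma \ref{l2} and repair any failure of the Scholz condition by a central twist $\delta\in\mathrm{Hom}(G_K,\Z/l\Z)$---is coherent in outline, and your preliminary reductions are correct: the $\zeta_{l^m}$ half of the Scholz condition is inherited because no new primes ramify, properness is automatic since a non-split central kernel of order $l$ is Frattini, and all solutions of a central embedding problem differ by a character. But the entire content of the lemma has now been pushed into the existence of $\delta$: a geometric character unramified outside $S=\ram(N/K)$ with prescribed restriction to each decomposition group $D_{\mathfrak{p}_i}$, $\mathfrak{p}_i\in S$. You name this ``Grunwald--Wang step'' as the main obstacle and then assert, without proof, that the relevant local restriction map is surjective up to a reciprocity relation absorbable ``by enlarging the conductor within $S$.'' In the regime $(l,p-1)=1$ this is exactly the delicate point and cannot be waved through: if such a $\delta$ always existed, the identical twisting argument would dispose of the split case as well and make the auxiliary prime $\mathfrak{q}\in\mathbb{P}(K)\backslash\mathbb{S}$ in Lemma \ref{l1}(2) unnecessary; the fact that the split case genuinely requires new ramification is strong evidence that the surjectivity you need fails in general. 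There are secondary problems too: ramifying $\delta$ at primes of $S$ changes the inertia images of the twisted solution, so the condition $D=I$ you are arranging must be re-verified after the twist, and geometricity of $\delta$ is asserted rather than shown.

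The paper's proof sidesteps all of this: in the non-split case no twist is needed. For $\mathfrak{r}\in\ram(\tilde{N}/K)$ the group $I(\mathfrak{R}/\mathfrak{r})=D(\mathfrak{R}/\mathfrak{r})$ is cyclic, and $D(\tilde{\mathfrak{R}}/\mathfrak{r})$ lies in its preimage $\tilde{I}$ in $\tilde{G}$, a central extension of a cyclic $l$-group by $\Z/l\Z$. Non-splitness (via Proposition IV.10.3 of \cite{MM}) forces $\tilde{I}$ to be cyclic; since the inertia group $I(\tilde{\mathfrak{R}}/\mathfrak{r})$ surjects onto $I(\mathfrak{R}/\mathfrak{r})$ and the unique index-$l$ subgroup of a cyclic $l$-group cannot do so, $I(\tilde{\mathfrak{R}}/\mathfrak{r})=\tilde{I}\supseteq D(\tilde{\mathfrak{R}}/\mathfrak{r})$, giving the Scholz equality outright. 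So the solution from Lemma \ref{l2} is already $l^m$-Scholz, and the problem is Frattini so the socle is unchanged. To make your argument complete you would either need to supply this group-theoretic observation (rendering the twist unnecessary) or actually prove the local-global existence of $\delta$, which is not a routine consequence of Carlitz--Hayes theory.
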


\begin{proof}
By Lemma \ref{l2} we already have a proper solution $\tilde{\phi}$ whose solution field $\tilde{N}$ satisfies the requisite ramification condition.  We will modify this solution to obtain a Scholz solution.

For $\mathfrak{r}\in \mathbb{T}:=\ram(\tilde{N}/K)$ with associated prime element $r$, fix prime divisors $\mathfrak{R}\in \mathbb{P}(N)$ and $\tilde{\mathfrak{R}}\in \mathbb{P}(\tilde{N})$.  Since $I(\mathfrak{R}/\mathfrak{r})=D(\mathfrak{R}/\mathfrak{r})$ and $\ker \phi \cong \Z/l\Z$, the decomposition group $D(\tilde{\mathfrak{R}}/\mathfrak{r})$ is contained in the preimage $\tilde{I}$ of type $\Z/l\Z \cdot I(\mathfrak{R}/\mathfrak{r})$ in $\tilde{G}$.  Thus it remains to show that $\tilde{I}$ is cyclic.

By Proposition IV.10.3 \cite[p. 351]{MM} $$\tilde{G}\cong D(\tilde{\mathfrak{R}}/\mathfrak{r})$$ which is cyclic in the non-split case.  Note, this is in fact a Frattini embedding problem, thus the socle remains the same.
\end{proof}

\begin{corollary}\label{l} Let $l$ be a prime with $(l,p-1)=1$.  Then every $l$-group satisfies Conjecture \ref{Restricted IGP}.
\end{corollary}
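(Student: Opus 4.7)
The plan is to combine the abelian construction of Theorem \ref{Abel} with an inductive use of Lemma \ref{l3} to lift a realization of the elementary abelian quotient $G/\Phi(G)$ all the way up to $G$ without enlarging the ramified set. The case $l = p$ is already Theorem \ref{p}, so I would assume $l \neq p$; then $p(G) = \{1\}$, and by Burnside's basis theorem $d := d((G/p(G))^{ab}) = d(G^{ab}) = d(G/\Phi(G)) = d(G)$. Because $p \nmid |G^{ab}|$, Conjecture \ref{Restricted IGP} predicts exactly $d$ ramified primes, and the bound $\ram_{\F_p(t)}(G) \geq d$ is immediate from Corollary \ref{minimality}.

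For the upper bound, fix $m$ with $l^m > |G|$ and proceed in three steps. First, realize the elementary abelian quotient $G/\Phi(G) \cong (\Z/l\Z)^d$ as a geometric $l^m$-Scholz Galois extension $N_0/\F_p(t)$ ramified at exactly $d$ finite primes $\mathfrak{q}_1,\ldots,\mathfrak{q}_d$: the abelian construction of Theorem \ref{Abel} produces such an extension at prescribed primes, and to secure the Scholz condition choose each $\mathfrak{q}_i$ so that $\zeta_{l^m}$ lies in its residue field, equivalently $l^m \mid p^{\deg \mathfrak{q}_i} - 1$, which is possible because $\mathrm{ord}_{l^m}(p)$ is finite and Theorem \ref{Pollack} supplies irreducibles of any prescribed degree. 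Second, choose a chain of normal subgroups $\Phi(G) = H_0 \supset H_1 \supset \cdots \supset H_k = \{1\}$ of $G$ with $|H_{i-1}/H_i| = l$; since the $l$-group $G/H_i$ can only act trivially on $\Z/l\Z$, each $H_{i-1}/H_i$ is central in $G/H_i$, and since $H_{i-1} \subseteq \Phi(G)$ each quotient map $G/H_i \twoheadrightarrow G/H_{i-1}$ is a Frattini embedding --- and for a central kernel of prime order, Frattini is equivalent to non-split. Third, apply Lemma \ref{l3} inductively (valid because $|G/H_i| \leq |G| < l^m$): each stage yields a proper geometric $l^m$-Scholz solution with the same ramification set, so after $k$ steps one obtains a $G$-extension $\tilde{N}/\F_p(t)$ ramified at exactly $\mathfrak{q}_1,\ldots,\mathfrak{q}_d$.

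The main obstacle is the base case: producing an $l^m$-Scholz realization of $(\Z/l\Z)^d$ with exactly $d$ ramified primes. The cyclotomic function field construction $\F_p(t,\Lambda_{M_i})$ used in Theorem \ref{Abel} is tamely and totally ramified at the primes dividing $M_i$, so the local condition $D(\tilde{\mathfrak{p}}/\mathfrak{p}) = I(\tilde{\mathfrak{p}}/\mathfrak{p})$ is automatic; the genuinely new work is to arrange $\zeta_{l^m}$ in each ramified local field, which amounts to choosing the $M_i$ of degree divisible by $\mathrm{ord}_{l^m}(p)$. Once this refinement is in place, the inductive climb is forced by Lemma \ref{l3} together with the purely group-theoretic observation that the tower from $G/\Phi(G)$ up to $G$ consists entirely of non-split central $\Z/l\Z$ embedding problems.
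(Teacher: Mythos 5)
Your proposal follows the same basic strategy as the paper --- realize a Scholz ``socle'' extension via Carlitz--Hayes cyclotomic function fields and then climb a central series using Lemma \ref{l3} --- but your organization of the induction is genuinely cleaner. The paper starts from a single cyclic extension and inducts up the central series of $G$, which silently requires handling the split central steps (where new generators of $G$ enter) by Lemma \ref{l1}, each of which costs an additional ramified prime; your version does all the generator-adding at once by realizing the full Frattini quotient $G/\Phi(G)\cong(\Z/l\Z)^d$ first, after which every remaining step is a Frattini, hence (for a central kernel of prime order) non-split, central $\Z/l\Z$-embedding problem, so Lemma \ref{l3} applies at every stage and the ramified set never grows. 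Your group-theoretic observation that Frattini is equivalent to non-split for a central kernel of order $l$ is correct and is exactly what makes the induction close.

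One point you pass over too quickly: in the base case you assert that $D(\tilde{\mathfrak{p}}/\mathfrak{p})=I(\tilde{\mathfrak{p}}/\mathfrak{p})$ is automatic because each cyclotomic piece is totally ramified at its prime. That is true inside each cyclic factor $N_i$, but the Scholz condition is imposed on the compositum $N_0=N_1\cdots N_d$: at the prime $\mathfrak{q}_i$ the inertia group is the $i$-th coordinate $\Z/l\Z$, while the decomposition group also contains the Frobenius of $\mathfrak{q}_i$ in the factors $N_j$, $j\neq i$, in which $\mathfrak{q}_i$ is unramified. So you must additionally arrange that each $\mathfrak{q}_i$ splits completely in the product of the other factors, e.g.\ by choosing the $M_i$ iteratively via a Chebotarev-type density argument (or Theorem \ref{Pollack}) so that each new $M_i$ has trivial Frobenius in the previously built fields while still satisfying $l^m\mid p^{\deg M_i}-1$. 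This is standard and fixable --- and the paper's own proof does not address it either --- but as written your base case is not yet fully Scholz when $d>1$.
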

\begin{proof}
Minimality follows from Corollary \ref{minimality}.  For existence, we start with a cyclic extension ramified at one prime where we choose $M\in \F_p[t]$ such that $$\Phi(M)\equiv 0 \pmod{(p-1)l}$$ and thus also $$\Phi(M)\equiv 0 \pmod{(p-1)l^m}$$ where $m$ is such that $|G|=l^n<l^m$ and $\deg(M)=d$ such that $l^m|(p^d-1)$.  This guarantees that the base extension is Scholz.  Then using the decomposition for $l$-groups given above, and Lemma \ref{l3} we produce the desired extension by induction.
\end{proof}

\subsection{Nilpotent groups}

\begin{theorem}\label{N}

Let $G$ be a nilpotent group.  Then $G$ satisfies Conjecture \ref{Restricted IGP}.

\end{theorem}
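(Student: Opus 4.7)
The plan is to exploit the Sylow decomposition $G \cong \prod_{q} G_q$ of a nilpotent group and reduce to the already-handled cases of $p$-groups (Theorem \ref{p}) and $l$-groups with $l\neq p$ (Corollaries \ref{LG1} and \ref{l}). Since $G$ is nilpotent, $p(G) = G_p$, so $(G/p(G))^{ab} \cong \prod_{q\neq p} G_q^{ab}$; because the factors have pairwise coprime order, $d((G/p(G))^{ab})$ equals $D := \max_{q\neq p} d(G_q^{ab})$. Set $\epsilon = 1$ if $G_p\neq 1$ and $\epsilon = 0$ otherwise; note that $\epsilon = 1$ is equivalent to $p \mid |G^{ab}|$ for nilpotent $G$. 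Minimality is already covered by Corollary \ref{minimality}, so the task is to produce a geometric $G$-extension with exactly $D+\epsilon$ ramified primes.

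The naive compositum of minimal extensions for each Sylow factor would yield $\sum_{q\neq p}d(G_q^{ab})+\epsilon$ ramified primes, which overshoots the target $D+\epsilon$ in general. The remedy is to share ramified primes across Sylow factors. I would first fix $D$ pairwise nonassociate monic irreducibles $\pi_1,\ldots,\pi_D\in\F_p[t]$ whose degrees are divisible enough that each $\pi_i$ simultaneously satisfies the Scholz condition $q^{m_q}\mid p^{\deg\pi_i}-1$ for every prime $q\neq p$ dividing $|G|$, with $q^{m_q}$ exceeding the exponent of $G_q$; such polynomials exist by standard density of irreducibles of prescribed divisible degree in $\F_p[t]$. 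For each such $q$, set $s_q = d(G_q^{ab})\le D$, and apply Corollary \ref{l} (when $(q,p-1)=1$) or Corollary \ref{LG1} (otherwise) to construct a geometric $G_q$-extension $L_q/\F_p(t)$ whose ramified primes are exactly $\{\pi_1,\ldots,\pi_{s_q}\}$. For the $p$-part, if $G_p\neq 1$, Theorem \ref{p} supplies $L_p$ ramified only at the infinite prime.

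Finally, take the compositum $L := \prod_q L_q$. Pairwise linear disjointness is automatic from the coprimality of the $|G_q|$, since $L_q\cap L_{q'}$ is a common Galois subextension whose degree divides both $|G_q|$ and $|G_{q'}|$; hence $\Gal(L/\F_p(t))\cong\prod_q G_q = G$, and the ramified primes of $L$ are the union of the ramified primes of the factors, namely $\{\pi_1,\ldots,\pi_D\}$ together with the infinite prime exactly when $\epsilon = 1$, giving $D+\epsilon$ primes, as needed.

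The main obstacle, and the step I would scrutinize most carefully, is extracting from Corollary \ref{l} and Proposition \ref{prop} the statement that the ramified primes can be prescribed in advance from any set satisfying the joint Scholz hypotheses, rather than merely being produced abstractly by the embedding-problem construction. Reading those proofs, the initial cyclic extension and each subsequent Scholz embedding step genuinely permit a free choice of the next ramified prime so long as the Scholz condition holds, so the alignment across Sylow factors should go through. Once this flexibility is pinned down, the remaining ingredients—the Sylow decomposition, the coprime-order linear disjointness, and the elementary counting—are formal.
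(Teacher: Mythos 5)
Your proposal is correct and follows essentially the same route as the paper: decompose $G$ into its Sylow subgroups, handle the $p$-part by Theorem \ref{p} and each $q$-part by Corollary \ref{LG1} or \ref{l}, and avoid overshooting the prime count by forcing all the $l$-group constructions to ramify over a single shared set of $D=d((G/p(G))^{ab})$ primes before taking the compositum (which is linearly disjoint by coprimality of orders). The paper phrases this prime-sharing as "start with the abelianization of $G$ as in Theorem \ref{Abel} and use its cyclic subextensions as the starting point for each $l$-group," which is the same mechanism as your simultaneous-Scholz choice of $\pi_1,\ldots,\pi_D$; your write-up just makes explicit the counting and the flexibility of prescribing ramification that the paper leaves implicit.
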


\begin{proof}
$G$ is the direct product of its (unique) Sylow subgroups, thus the result follows by taking the compositum of the corresponding $G_l$-extensions found in Corollary \ref{l} and Proposition \ref{prop} and the $G_p$-extension found in Theorem \ref{p}.  To ensure proper ramification, start with the abelianization of $G$ as found in Theorem \ref{Abel} and use the cyclic subgroups of this as the starting point for the construction of each $l$-group.

\end{proof}

\section{Conclusion}

We have presented Conjecture \ref{Restricted IGP} describing the expected minimal ramification for $G$-extensions over the field $\F_p(t)$.  As evidence for this conjecture, we have established the cases where $G$ is a $p$-group (Theorem \ref{p}), abelian (Theorem \ref{Abel}), semiabelian with $d(G)=d(G^{ab})$ (Theorem \ref{semi}), is dihedral of order $2a$ where $p\neq 2,a$ and $a$ is an odd prime (Theorem \ref{Dn}), or is the symmetric group $S_3$ with $p\equiv 0,1\pmod{3}$ (Theorem \ref{S32}).  We then showed the case where $G$ is an $l$-group, $l\neq p$ (Corollaries \ref{LG1} and \ref{l}), and used this to prove the case of Nilpotent groups (Theorem \ref{N}).  We used methods ranging from generic polynomials and explicit computation, to the theory of pro-$p$ groups, to the theory of Drinfeld modules, to embedding theory.  These examples also illustrate several different ways of piecing together new cases from those already known.

These examples, together with the previous work mentioned, lead us to believe that Conjecture \ref{Restricted IGP} will hold true in general.  Further, Boston and Markin \cite{BM} implies that we should be able to achieve a quantitative result describing how often these so-called minimal extensions appear among all $G$-extensions.

\vfill\eject

\bibliographystyle{plain}
\bibliography{main}

\end{document}